\documentclass{amsart}
\usepackage{verbatim}
\usepackage[textsize=scriptsize]{todonotes}
\usepackage{etoolbox}
\newtoggle{draft}
\newtoggle{final}

\toggletrue{final}

\usepackage{etoolbox}
\iftoggle{draft} {
\usepackage[margin=1.5in]{geometry}
}{ 
\usepackage[margin=1in]{geometry}
\setlength{\marginparwidth}{0.75in}
}
\geometry{a4paper}

\def\PSh{\mathcal{S}\mathrm{hv}}

\usepackage{amsmath}
\usepackage{amssymb}
\usepackage{amsthm}
\usepackage{amscd}
\usepackage{enumerate}
\usepackage[pdfusetitle,unicode,hidelinks]{hyperref}
\usepackage{bbm}
\usepackage{etoolbox}

\usepackage[utf8]{inputenc}
\usepackage[T1]{fontenc}

\newcommand{\tomemail}{\href{mailto:tom.bachmann@zoho.com}{tom.bachmann@zoho.com}}

\newtheorem{proposition}{Proposition}
\newtheorem{corollary}[proposition]{Corollary}
\newtheorem{lemma}[proposition]{Lemma}
\newtheorem{theorem}[proposition]{Theorem}

\newtheorem*{conjecture*}{Conjecture}
\newtheorem*{theorem*}{Theorem}
\newtheorem*{corollary*}{Corollary}
\newtheorem*{proposition*}{Proposition}
\newtheorem*{lemma*}{Lemma}
\theoremstyle{definition}
\newtheorem{definition}[proposition]{Definition}

\newtheorem*{definition*}{Definition}
\newtheorem*{construction*}{Construction}
\theoremstyle{remark}
\newtheorem{remark}[proposition]{Remark}
\newtheorem*{remark*}{Remark}

\newtheorem{example}[proposition]{Example}
\newtheorem*{example*}{Example}

\newcommand{\id}{\operatorname{id}}
\newcommand{\Z}{\mathbb{Z}}

\newcommand{\Q}{\mathbb{Q}}

\let\scr=\mathcal
\let\bb=\mathbb
\newcommand{\Gm}{{\mathbb{G}_m}}
\newcommand{\Gmp}[1]{{\mathbb{G}_m^{\wedge #1}}}
\def\A{\bb A}
\def\P{\bb P}
\def\R{\bb R}
\newcommand{\1}{\mathbbm{1}}

\newcommand{\DM}{\mathcal{DM}}
\newcommand{\SH}{\mathcal{SH}}

\DeclareMathOperator*{\colim}{colim}

\let\lim=\relax
\DeclareMathOperator*{\lim}{lim}
\def\Map{\mathrm{Map}}

\def\map{\mathrm{map}}

\def\PSh{\mathcal{P}}

\def\Cat{\mathcal{C}\mathrm{at}{}}

\def\Fun{\mathrm{Fun}}

\newcommand{\wequi}{\simeq}

\def\adj{\leftrightarrows}

\DeclareRobustCommand{\ul}{\underline}
\newcommand{\heart}{\heartsuit}

\newcommand{\Hom}{\operatorname{Hom}}

\def\op{\mathrm{op}}

\let\cat=\mathrm
\def\Sm{{\cat{S}\mathrm{m}}}
\def\Sch{\cat{S}\mathrm{ch}{}}

\def\Zar{\mathrm{Zar}}

\newcommand{\et}{{\acute{e}t}}

\newcommand{\lra}[1]{\langle #1 \rangle}
\def\ph{\mathord-}

\numberwithin{proposition}{section}

\iftoggle{draft} {
\newcommand{\NB}[1]{\todo[color=gray!40]{#1}}
}{ 
\newcommand{\NB}[1]{}
}
\iftoggle{final} {
\renewcommand{\todo}[1]{}
}{ 
\newcommand{\TODO}[1]{\todo[color=red]{#1}}
}

\newcommand{\proet}{{pro{\acute{e}t}}}
\newcommand{\hyp}{\wedge}
\newcommand{\comp}{\wedge}
\newcommand{\Shv}{\mathcal{S}\mathrm{hv}}

\def\imap{\underline{\mathrm{map}}}
\newcommand{\pcd}{\operatorname{pcd}}
\newcommand{\cd}{\operatorname{cd}}

\input cyracc.def
\DeclareFontFamily{U}{russian}{}
\DeclareFontShape{U}{russian}{m}{n}
        { <5><6> wncyr5
        <7><8><9> wncyr7
        <10><10.95><12><14.4><17.28><20.74><24.88> wncyr10 }{}
\DeclareSymbolFont{Russian}{U}{russian}{m}{n}
\DeclareSymbolFontAlphabet{\mathcyr}{Russian}
\makeatletter
\let\@math@cyr\mathcyr
\renewcommand{\mathcyr}[1]{\@math@cyr{\cyracc #1}}
\makeatother


\def\Zptw{\hat\Z_p(1)}
\def\Sptw{\hat\1_p(1)}
\def\nSptw{\hat\1_p(-1)}
\def\H{\mathrm{H}}

\setcounter{tocdepth}{1}

\title{Rigidity in étale motivic stable homotopy theory}
\date{\today}

\author{Tom Bachmann}
\address{Department of Mathematics, Massachusetts Institute of Technology,
Cambridge, MA, USA}
\email{\tomemail}

\begin{document}

\maketitle

\begin{abstract}
For a scheme $X$, denote by $\SH(X_\et^\hyp)$ the stabilization of the
hypercompletion of its étale $\infty$-topos, and by $\SH_\et(X)$ the
localization of the stable motivic homotopy category $\SH(X)$ at the
(desuspensions of) étale hypercovers.
For a stable $\infty$-category $\scr C$, write $\scr C_p^\comp$ for
the $p$-completion of $\scr C$.

We prove that under suitable finiteness hypotheses, and assuming that $p$ is
invertible on $X$, the canonical functor \[ e_p^\comp: \SH(X_\et^\hyp)_p^\comp \to
\SH_\et(X)_p^\comp \] is an equivalence of $\infty$-categories. This generalizes
the rigidity theorems of Suslin-Voevodsky \cite{suslin1996singular}, Ayoub
\cite{ayoub2014realisation} and Cisinski-Déglise \cite{cisinski2013etale} to the
setting of spectra. We deduce that under further regularity hypotheses on $X$,
if $S$ is the set of primes not invertible on $X$, then the endomorphisms of the
$S$-local sphere in $\SH_\et(X)$ are given by étale hypercohomology with
coefficients in the $S$-local classical sphere spectrum:
\[ [\1[1/S], \1[1/S]]_{\SH_\et(X)} \wequi \mathbb{H}^0_\et(X, \1[1/S]). \]
This confirms a conjecture of Morel.

The primary novelty of our argument is that we use the pro-étale topology
\cite{bhatt2013pro} to construct directly an invertible object $\Sptw[1] \in
\SH(X_\et^\hyp)_p^\comp$ with the property that $e_p^\comp(\Sptw[1]) \wequi
\Sigma^\infty \Gm \in \SH_\et(X)_p^\comp$.
\end{abstract}

\tableofcontents

\section{Introduction}
Étale motivic cohomology with finite coefficients invertible in the base
coincides with étale cohomology. In more categorical terms, the canonical
functor $D(X_\et, \Z/n) \to \DM_\et(X, \Z/n)$ is an equivalence, provided that
$1/n \in \scr O(X)^\times$. This was proved for the case where $X$ is the
spectrum of a field by Suslin-Voevodsky \cite[Theorem 4.4]{suslin1996singular}
\cite[Proposition 3.3.3]{voevodsky-triang-motives}. Versions of this result over
more general bases were established by Ayoub for motives without transfers
\cite[Theoreme 4.1]{ayoub2014realisation} and by Cisinski-Déglise for motives
with transfers \cite[Theorem 4.5.2]{cisinski2013etale}. It is a natural
question to ask if there is a ``spectral'' version of these results. The main aim of
this article is to establish the following positive answer.

\begin{theorem*}[see Theorem \ref{thm:main}]
Let $X$ be a locally $p$-étale finite scheme and $p$ a prime with $1/p \in
X$.\footnote{We will abuse notation and write $1/p \in X$ instead of $p
\in \scr O(X)^\times$.}
Then the canonical functor
$\SH(X_\et^\hyp)_p^\comp \to \SH_\et(X)_p^\comp$ is an equivalence.
\end{theorem*}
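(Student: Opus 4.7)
The plan is to reduce the theorem to two sub-problems: after $p$-completion, show that (i) the $\A^1$-localization built into $\SH_\et(X)$ is automatic on hypercomplete étale sheaves of spectra, and (ii) a lift of $\Sigma^\infty \Gm$ to $\SH(X_\et^\hyp)_p^\comp$ exists and is already invertible, making the $\Gm$-stabilization redundant. If both hold, then $\SH(X_\et^\hyp)_p^\comp$ satisfies the universal property defining $\SH_\et(X)_p^\comp$, and $e_p^\comp$ must be an equivalence.

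For (i), I would leverage the classical Suslin--Gabber rigidity for torsion étale cohomology: for $p$ invertible on $X$ and $Y \in \Sm_X$, the projection $\A^1_Y \to Y$ induces an isomorphism on étale cohomology with $\Z/p^n$-coefficients. Using the local étale finiteness of $X$ to commute homotopy limits with étale cohomology, this propagates from cohomology to arbitrary $p$-complete hypercomplete étale sheaves of spectra, giving the desired $\A^1$-invariance of $\SH(X_\et^\hyp)_p^\comp$.

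For (ii), the central novelty is to construct the Tate twist directly in the source using the pro-étale topology of Bhatt--Scholze. The Tate module $\hat\Z_p(1) = \lim_n \mu_{p^n}$ becomes pro-étale-locally isomorphic to the constant sheaf $\hat\Z_p$, because $w$-contractible affines admit compatible systems of $p^n$-th roots of unity. Consequently, the $p$-complete object $\Sptw = \hat\1_p(1) \in \SH(X_\proet^\hyp)_p^\comp$ is pro-étale-locally equivalent to the $p$-complete sphere $\hat\1_p$, so invertibility, being a local property, descends. Under the finiteness hypothesis, $p$-complete hypercomplete sheaves of spectra on $X_\proet$ and on $X_\et$ should agree, yielding an invertible object $\Sptw \in \SH(X_\et^\hyp)_p^\comp$. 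The Kummer sequence $1 \to \mu_{p^n} \to \Gm \to \Gm \to 1$, after $p$-completion, then identifies $(\Sigma^\infty \Gm)_p^\comp$ with $\Sigma^\infty \Sptw[1]$, so $e_p^\comp(\Sptw[1]) \simeq \Sigma^\infty \Gm$.

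The principal obstacle is the invertibility of $\Sptw$: pro-étale descent for unbounded $p$-complete spectra demands careful control of convergence of homotopy limits, and the passage back to the étale topos relies crucially on the local étale finiteness hypothesis to ensure a basis of $w$-contractible schemes of bounded cohomological dimension. Once $\Sptw[1]$ is in hand as an invertible preimage of $\Sigma^\infty \Gm$, verifying that $e_p^\comp$ is an equivalence should proceed formally: full faithfulness by checking on $\Gm$-twisted suspensions of smooth $X$-schemes (where the target mapping spectra are étale hypercohomology, as anticipated by the corollary in the abstract), and essential surjectivity because the $\A^1$-localization and $\Gm$-stabilization defining the target are now both trivial operations on the source.
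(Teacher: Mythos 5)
Your overall architecture (construct the twist pro-étale-locally, descend to the étale topos using the finiteness hypothesis, and prove $\A^1$-invariance of $p$-complete étale hypersheaves from the classical statement for $\Z/p^n$-coefficients) does match steps (1) and (3) of the paper, and your point (i) is essentially Corollary \ref{cor:htpy-inv}. But there are two genuine gaps. The first is the claim that ``the Kummer sequence, after $p$-completion, identifies $(\Sigma^\infty\Gm)_p^\comp$ with $\Sptw[1]$''. This conflates the abelian sheaf $\Gm$ (to which the Kummer sequence applies) with the motivic object $\Sigma^\infty(\A^1\setminus 0,1)$; the Kummer sequence only controls the associated $H\Z/p^n$-motive, i.e.\ it gives $\Sptw\wedge H\Z/p^n\wequi\mu_{p^n}$ (Lemma \ref{lemm:plausibility}) and the Suslin--Voevodsky/Cisinski--D\'eglise statement in $\DM_\et(-,\Z/p)$, not an equivalence of $p$-complete sphere-level objects. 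Establishing $e(\Sptw[1])\wequi\Sigma^\infty\Gm$ in $\SH_\et(X)_p^\comp$ is the hardest part of the paper: one must construct a specific comparison map $\sigma$ (via the pro-étale $\Zptw$-torsor $\lim_n(x\mapsto x^{p^n})$ over $\A^1\setminus 0$, Proposition \ref{prop:sigma}), prove $\sigma$-locality of objects in the image of $e$ (Proposition \ref{prop:sigma-locality}), reduce to separably closed fields by a conservativity argument that itself needs Ayoub's localization theorem and continuity (Corollary \ref{cor:conservativity}), build a candidate inverse $\tau:\1[1]\to\Gm$ from a primitive $p$-th root of unity using Morel's computation $[\1,\Gm]_{\SH^{S^1}(k)}=K_1^{MW}(k)$ (Lemma \ref{lemm:tau}), check $M(\sigma\tau)$ is an equivalence in $\DM_\et(k,\Z/p)$ (Lemma \ref{lemm:sigma-realization}), and only then promote this to an equivalence using $[\1,\1]\wequi\Z_p$ and a lemma on symmetric monoidal categories. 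Knowing the statement modulo $p$ does not by itself yield the $p$-complete statement; the explicit $\tau$ and the computation of the endomorphism ring are what make the bootstrap work. Your proposal contains none of this, so the key identification is unsupported.

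The second gap is essential surjectivity. Your appeal to a universal property does not apply: $\SH(X_\et^\hyp)$ is not presented as a localization of presheaves on $\Sm_X$, so knowing that it is $\A^1$-invariant and contains an invertible lift of $\Gm$ only gives you that $e$ is (after the locality statements) fully faithful onto a localizing subcategory of $\SH_\et(X)_p^\comp$; it does not show that suspension spectra of arbitrary smooth $X$-schemes lie in that subcategory, since the source only sees the small étale site. The paper handles this by the Cisinski--D\'eglise argument: $\SH_\et(X)$ is generated by objects $q_*(\1\wedge\Gmp{n})$ with $q$ projective, and $e$ commutes with such $q_*$ because \emph{both} sides satisfy proper base change --- Corollary \ref{cor:proper-base-change} for the source (which is a separate, non-formal input needing the étale finiteness hypothesis) and Ayoub's six-functor formalism (Theorem \ref{thm:6-functors}) for the target. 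Your proposal omits proper base change entirely, so ``essential surjectivity because the localizations are trivial on the source'' does not go through as stated.
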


We recall the definitions of the terms in the above theorem in the following set
of remarks.

\begin{remark*}[spectral sheaves and hypercompletion]
We denote by $X_\et$ the small étale site of $X$, i.e. the category of (finitely
presented) étale $X$-schemes, with the Grothendieck topology given by the
jointly surjective quasi-compact families. Associated with this we have the
$\infty$-topos $\Shv(X_\et)$ of sheaves of spaces on $X_\et$, i.e. presheaves of
spaces satisfying étale descent. We denote by $\Shv(X_\et^\hyp)$ its
hypercompletion; in other words these are the presheaves satisfying descent with
respect to all hypercovers. Equivalently, equivalences are detected on homotopy
sheaves \cite{dugger2004hypercovers}. Finally we denote by $\SH(X_\et^\hyp)$ the
stabilization of the $\infty$-topos $\Shv(X_\et^\hyp)$; equivalently this is the
category of spectral hypersheaves: the category of functors from $X_\et^\op$ to
spectra, satisfying descent for all étale hypercovers (or, equivalently, being
local for the family of weak equivalences detected by homotopy sheaves). See
Section \ref{subsec:sheaves-of-spectra} for more about sheaves of spectra.
\end{remark*}

\begin{remark*}[$p$-completion of stable categories]
In spectral settings, there is no evident analog of ``working with $\Z/n$
coefficients''. One standard resolution of this is to use $p$-completion, which
is somewhat analogous to working with $\Z_p$-coefficients, where $\Z_p$ denotes
the ring of $p$-adic integers. Let $\scr C$ be a presentable stable
$\infty$-category (or a triangulated category with small coproducts). For $X, Y
\in \scr C$ the set of homotopy classes of maps from $X$ to $Y$
is naturally an abelian group, and consequently for every $X \in \scr C$ we have
a canonical endomorphism $p = p\id_X = \id_X + \id_X + \dots + \id_X$. We denote
by $X/p$ the cofiber of (cone on) this endomorphism. We call a map $f: X \to Y \in
\scr C$ a $p$-equivalence if $cone(f)/p \wequi 0$, and we denote by $\scr
C_p^\comp$ the localization of $\scr C$ at the $p$-equivalences; under our
assumptions this exists and is in fact equivalent to the full subcategory of
$\scr C$ right orthogonal to all objects $X \in \scr C$ such that $X/p \wequi
0$. See Section \ref{subsec:p-completion} for more about $p$-completion.
\end{remark*}

\begin{remark*}[étale motivic stable homotopy theory]
The motivic stable $\infty$-category $\SH(X)$ is obtained from the
$\infty$-topos $\PSh(\Sm_X)$ by (1) inverting the Čech nerves of Nisnevich coverings, (2)
inverting all maps of the form $\A^1_Y \to Y$ for $Y \in \Sm_X$, and (3)
passing to pointed objects, then stabilizing with respect to the endofunctor
$\wedge \P^1$; see e.g. \cite[Section 5]{morel-trieste} \cite[Sections 2.2 and
4.1]{bachmann-norms}. The category $\SH_\et(X)$ is constructed in exactly the
same way, except that in step (2) we use the étale hypercoverings instead.
Equivalently, $\SH_\et(X)$ is obtained from $\SH(X)$ by inverting the maps of
the form $\Sigma^{\infty-n}_+ \scr Y \to \Sigma^{\infty - n}_+ Y$, for étale
hypercoverings $\scr Y \to Y \in \Sm_X$. The functor of taking the associated locally
constant sheaf $e: \Shv(X_\et) \to \Shv(\Sm_{X,\et})$ induces a functor
$\SH(X_\et^\hyp) \to \SH_\et(X)$, which after $p$-completion induces the
equivalence in the theorem.
\end{remark*}

\begin{remark*}[$p$-étale finiteness]
We call a scheme $X$ \emph{$p$-étale finite} if for every finite type scheme $Y/X$ there
exists $n$ such that every finitely presented, qcqs étale $Y$-scheme $Z$
satisfies $\cd_p(Z) \le n$. We call $X$ \emph{locally $p$-étale finite}
if it admits an étale cover by $p$-étale finite schemes. This holds for example if
$X$ is of finite type over a field of finite virtual $p$-étale cohomological
dimension (this includes all finite fields, separably closed fields, numbers
fields, and $\R$) or $\Z$. We call a scheme (locally) étale finite if it is
(locally) $p$-étale finite for all primes $p$. See Definitions
\ref{def:uniformly-bounded} and \ref{def:etale-finite} and Examples
\ref{ex:uniformly-bounded-dim} and \ref{ex:etale-finite}.
\end{remark*}

With the above theorem at hand, we of course find that $[\1,
\1]_{\SH_\et(X)_p^\comp} \wequi
[\1, \1]_{\SH(X_\et^\hyp)_p^\comp}$. This is significant, since the left hand side is a
priori much more complicated than the right hand side, which is basically
controlled by étale cohomology of $X$ and the classical stable homotopy groups. In
general, one expects to learn essentially everything about a category $\scr C$
by studying $\scr C_p^\comp$ for all $p$, and also the rationalization $\scr
C_\Q$. Since $\SH_\et(X)_\Q \wequi \DM_\et(X, \Q) \wequi \DM(X, \Q)$ is
reasonably well understood, one might hope to patch together all of these
computations to determine $[\1, \1]_{\SH_\et(X)}$; this was the original aim of
the article. It has been fulfilled as follows.

\begin{corollary*}[see Corollary \ref{corr:final}]
Let $X$ be locally étale finite, and $S$ be the set of primes not invertible on $X$.
Assume that $X$ is regular, noetherian and finite dimensional. Then
\[ [\1, \1]_{\SH_\et(X)[1/S]} \wequi \mathbb{H}^0_\et(X, \1[1/S]), \]
where the right hand side denotes étale hypercohomology with coefficients in the
(classical) sphere spectrum (in other words $\pi_0$ of the spectrum of global
sections of the étale hypersheafification of the constant presheaf of spectra
with value the classical sphere spectrum).
\end{corollary*}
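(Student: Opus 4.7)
The plan is to apply an arithmetic fracture square, reducing the corollary to two ingredients that are already at hand: the main theorem (for each $p$-completion with $p \notin S$) and the standard rational comparison between étale motives and étale cohomology. The heart of the matter is that on both sides of the desired equivalence, $\1[1/S]$ sits in a pullback square whose corners are the rationalization and the product of $p$-completions for $p \notin S$, and that the canonical functor $e$ is well-behaved with respect to these constructions.

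Concretely, I would first note that $e: \SH(X_\et^\hyp) \to \SH_\et(X)$ is strongly monoidal and cocontinuous, sends $\1$ to $\1$, and therefore commutes with cofibers, limits, $p$-completion, rationalization, and $S$-localization. Hence $e$ induces a morphism between the arithmetic fracture squares
\[ \1[1/S] \wequi \1_\Q \times_{(\prod_{p \notin S} \1_p^\comp)_\Q} \prod_{p \notin S} \1_p^\comp \]
in $\SH(X_\et^\hyp)$ and in $\SH_\et(X)$ respectively. Applying $\map(\1, -)$ to each square yields pullbacks of spectra, so after passing to $\pi_0$ the statement reduces to identifying each of the three vertices under $e$.

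For each $p \notin S$, the main theorem identifies $\SH(X_\et^\hyp)_p^\comp$ with $\SH_\et(X)_p^\comp$, taking $\1$ to $\1$, and hence $\map_{\SH_\et(X)}(\1, \1_p^\comp) \wequi \map_{\SH(X_\et^\hyp)}(\1, \1_p^\comp)$, where the right-hand side is by construction the $p$-completed étale hypercohomology spectrum. For the rational vertex, the standard chain $\SH_\et(X)_\Q \wequi \DM_\et(X, \Q) \wequi \DM(X, \Q)$, whose second equivalence uses regularity of $X$ (Cisinski--Déglise), followed by the identification $[\1, \1]_{\DM(X, \Q)} \wequi \mathbb{H}^0_\et(X, \Q) \wequi \mathbb{H}^0_\et(X, \1_\Q)$ of rational motivic cohomology in weight zero with rational étale cohomology, handles the rational corner. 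The mixed corner is then obtained by rationalizing the product computation on both sides, using once more the cocontinuity of $e$ and of rationalization.

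The main obstacle I expect is bookkeeping around convergence and $\lim^1$-vanishing when taking $\pi_0$ of the pullback, and justifying that $\map(\1, -)$ and $e$ commute with the infinite product $\prod_{p \notin S}$. Here the local étale finiteness hypothesis is essential: it provides a uniform bound on the étale cohomological dimension of schemes étale over $X$, which ensures that étale hypercohomology is bounded, that products commute with global sections as required, and that the pullback square decomposes cleanly at $\pi_0$. Once this boundedness is secured, the corollary follows by chasing through the two compatible fracture squares.
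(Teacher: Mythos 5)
Your overall strategy --- fracture $\1[1/S]$ into its rationalization and its $p$-completions, use Theorem \ref{thm:main} at each $p \notin S$, and a rational comparison with $\DM(X,\Q)$ --- is in substance the route the paper takes (Corollary \ref{corr:final} is a restatement of Theorem \ref{thm:app2}, whose proof shows the cofiber of $\1[1/S] \to e_*e(\1[1/S])$ is uniquely divisible, hence rational, and then kills it by a rational computation). But as written there are genuine gaps. First, $e$ is a left adjoint: it preserves colimits and finite limits, but not infinite products nor the limits $\lim_n \1/p^n$ defining $p$-completion, so the assertion that $e$ ``commutes with limits, $p$-completion'' and hence carries one fracture square to the other is unjustified. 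The mechanism that actually works (and that the paper uses) is that $e_*$ preserves colimits because $e$ preserves compact generators (Corollary \ref{cor:SH-et-compactly-gen}, Lemma \ref{lemm:SH-compact-objects}); one then applies $e_*$, which preserves the pullback and the products automatically and, by cocontinuity, the rationalizations. Relatedly, your appeal to local étale finiteness to ensure ``étale hypercohomology is bounded'' and that $\map(\1,-)$ commutes with the relevant colimits is false in examples covered by the corollary: $X = Spec(\R)$ is regular, noetherian, zero-dimensional and locally étale finite, yet $\cd(\R) = \infty$, the unit is not compact in $\SH_\et(X)$, and the hypercohomology is unbounded. The finiteness hypothesis buys compact generation by objects of finite cohomological dimension (and the main theorem), not compactness of $\1$, so the ``bookkeeping'' as you describe it would not go through.

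Second, identifying the rational vertex only on $\pi_0$ is not enough: to compare $\pi_0$ of the two pullbacks you need the comparison to be an equivalence of mapping spectra at the corners (at least in degrees $0$ and $1$, including the mixed corner). Concretely one must show that $e_\Q$ induces isomorphisms $\H^i_\et(X,\Q) \cong [\1,\1[i]]_{\SH_\et(X)_\Q} \cong [\1,\1[i]]_{\DM(X,\Q)}$ in all relevant degrees, and the nontrivial content --- the vanishing of $[\1,\1[i]]_{\DM(X,\Q)} = Gr^0_\gamma K_{-i}(X)_\Q$ for $i \ne 0$, which uses regularity and the vanishing of negative $K$-theory, matched against $\H^i_\et(X,\Q) = 0$ for $i \ne 0$ --- is exactly the technical heart of the paper's proof of Theorem \ref{thm:app2}, which your proposal compresses into ``the standard identification''; likewise $\SH_\et(X)_\Q \wequi \DM(X,\Q)$ requires killing the minus part of $\SH(X)_\Q$ étale-locally. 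With these repairs (cocontinuity of $e_*$ in place of the false commutation claims for $e$, and the full-degree rational computation), your fracture argument does recover the corollary, and is essentially the paper's argument in different packaging.
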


\subsection*{Proof strategy (for the main theorem)}
Suppose that the functor $e: \SH(X_\et^\hyp)_p^\comp \to \SH_\et(X)_p^\comp$ is
indeed an equivalence. As a basic sanity check, we should be able to write down
an object $\Sptw[1] \in \SH(X_\et^\hyp)_p^\comp$ such that $e(\Sptw[1]) \wequi
\Gm$. In the abelian situation, say with $\Z/p^n$ coefficients (i.e. in
$\DM_\et(X, \Z/p^n)$), the
corresponding sheaf is $\mu_{p^n}[1]$, the sheaf of $p^n$-th roots of unity. In the
spectral situation however, it is not so obvious (to the author) what the
analogous object is. We know that $\Sptw[1]$ should be an invertible
spectrum, that $\Sptw[1] \wedge \H\Z/p^n \wequi \mu_{p^n}[1]$, and by analogy
with the abelian situation we might guess that $\Sptw[1] \wequi \1[1]$ if the
base has all $p^n$-th roots of unity for all $n$.

It turns out that the construction of $\Sptw[1]$ is central to our proof of the
main theorem, so let us pursue this further.
The last condition gives a clue: even if we don't know how to construct
$\Sptw[1]$ directly, it seems to be a form of $\1[1]$ in some sense, so we might
try to construct it by descent. The problem is that since we are somehow working
with $\Z/p^n$ coefficients for all $n$ at the same time, there will usually not
be any étale cover after which the equivalence $\Sptw[1] \wequi \1[1]$ is
achieved. Indeed we expect this to happen after \emph{all} $p^n$-th roots of unity have
been adjoined for all $n$, and this does not constitute an étale cover. It is
however a \emph{pro-étale} cover. This suggests that we might wish to employ the
\emph{pro-étale topology}, as defined by Bhatt-Scholze \cite{bhatt2013pro}. We
review this somewhat technical notion at the beginning of Section
\ref{sec:proet}, but the upshot is that $\Zptw := \lim_n \mu_{p^n} \in \Sch_X$
belongs to the pro-étale site $X_\proet$, and we define \[ \Sptw[1] :=
\Sigma^\infty K(\Zptw, 1) \in \SH(X_\proet^\hyp)_p^\comp \]
(note that $\Zptw$ is a pro-étale form of $\Z_p$, hence $\Sptw[1]$ is a pro-étale
form of $\Sigma^\infty K(\Z_p,1) \wequi \1[1] \in \SH(X_\proet^\hyp)_p^\comp$).
This object has the expected
properties, but it lives in the wrong category. However one may show that in
good cases (e.g. $X = Spec(\Z[1/p])$), the functor $\SH(X_\et^\hyp)_p^\comp \to
\SH(X_\proet^\hyp)_p^\comp$ is fully faithful and $\Sptw[1]$ is in the essential
image. This way we obtain $\Sptw[1] \in \SH(Spec(\Z[1/p])_\et^\hyp)_p^\comp$,
and we define it over a general scheme by base change from $Spec(\Z[1/p])$.

With this preliminary out of the way, our proof is actually a fairly
straightforward adaptation of the arguments from \cite{cisinski2013etale}. We
can summarize it as follows.
\begin{enumerate}
\item Construct $\Sptw \in \SH(X_\et^\hyp)_p^\comp$.
\item Construct a natural map $\sigma: \Gm \to e(\Sptw[1]) \in
  \SH(\Sm_{X,\et}^\hyp)_p^\comp$ and prove that if $E \in
  \SH(X_\et^\hyp)_p^\comp$ then $e(E)$ is local with respect to the family of
  maps $\sigma \wedge \id_Y$, $Y \in \Sm_X$.
\item Prove homotopy invariance and proper base change for $E \in
  \SH(X_\et^\hyp)$.
\item Prove that $\Sigma^\infty \sigma \in \SH_\et(X)_p^\comp$ is an equivalence.
\end{enumerate}
Once these steps are achieved, we conclude from (4) that $\SH_\et(X) \wequi
L_{\A^1,\sigma} \SH(\Sm_{X,\et}^\hyp)_p^\comp$, where the right hand side
denotes the localization at the family of maps from (2) and also at $Y \times
\A^1 \to Y$. Steps (2,3) then imply that $e: \SH(X_\et^\hyp)_p^\comp \to
\SH_\et(X)_p^\comp$ is fully faithful. Essential surjectivity follows from the fact that
both sides satisfy proper base change (as established for the left hand side in
(3) and for the right hand side by Ayoub), via an argument of Cisinski-Déglise
\cite[Proof of Theorem 4.5.2]{cisinski2013etale}.

Of steps (2--4), the most interesting one is probably (4). Since $\sigma$ is
stable under base change, using a localization argument we may reduce to the
case where $X$ is the spectrum of a separably closed field of characteristic
$\ne p$. In this situation we construct a map $\tau: \Sptw[1] \wequi \1[1] \to
\Gm \in L_{\A^1} \SH(\Sm_{k,\et}^\hyp)_p^\comp$ which induces an inverse to
$\sigma$ in $\DM_\et(k, \Z/p)$. Since $[\1, \1]_{L_{\A^1}
\SH(\Sm_{k,\et}^\hyp)_p^\comp} \wequi \Z_p$ by (3), we deduce from this that
$\sigma\tau: \1[1] \to \1[1] \in L_{\A^1} \SH(\Sm_{k,\et}^\hyp)_p^\comp$ is an equivalence. This
implies that $\Sigma^\infty \sigma$ is an equivalence by a general result about
symmetric monoidal categories \cite[Lemma 22]{bachmann-hurewicz}.

\subsection*{Organization}
In Section \ref{sec:prelim} we collect some preliminaries about $p$-completion,
spectral sheaves, and étale cohomological dimension. In Section
\ref{sec:proet} we use the pro-étale topology to construct the twisting spectrum
$\Sptw$ and establish its properties, achieving step (1). In Section
\ref{sec:etale-cohomology} we prove some ``standard facts'' about étale
cohomology with spectral coefficients. This achieves steps (2) and (3). In
Section \ref{sec:motivic-category} we prove/recall some essentially well-known facts
about the functor $X \mapsto \SH_\et(X)$. Then we carry out step (4) and
hence conclude the proof of the main theorem in Section \ref{sec:main}. We
collect some applications in Section \ref{sec:applications}.

\subsection*{Necessity of the étale finiteness hypothesis}
We prove our main result (and hence all applications) under the assumption of
(local) ``$p$-étale finiteness''. While this is satisfied quite often in practice,
it is an unsatisfying hypothesis, since the rigidity theorems of
Cisinski-Déglise and Ayoub do not need it. Essentially the only part of the
proof where we need the hypothesis is in step (3). That is to say, the author
has been unable to prove (for example) that $\SH(X_\et^\hyp)_p^\comp \to
\SH(\A^1 \times X_\et^\hyp)_p^\comp$ is fully faithful (for $1/p \in X$) without
the assumption that $X$ is étale finite.

We also often use the notion of ``uniformly bounded étale cohomological
dimension $\le n$'', which is slightly stronger than the usual notion of
``locally of étale cohomological dimension $\le n$''. Some of our intermediate
results can probably be strengthened to hold under the weaker assumption; we
chose not to do this because all our examples satisfy the stronger
conclusion anyway.

\subsection*{Use of $\infty$-categories}
This article is written in the language of $\infty$-categories, as set out in
\cite{lurie-htt,lurie-ha}. This is mostly inconsequential: apart from Sections
\ref{sec:proet} and \ref{sec:motivic-category}, everything can be formulated at
the level of triangulated categories, and for Sections \ref{sec:proet} and
\ref{sec:motivic-category} a translation into the language model categories is straightforward.

\subsection*{Notation}
We denote by $\Map(A, B)$ the mapping space between objects in an
$\infty$-category, by $\map(A, B)$ the mapping spectrum in a stable
$\infty$-category, and by $\imap(A, B)$ the internal mapping spectrum in a closed
symmetric monoidal stable $\infty$-category. We put $[A, B] = \pi_0 \Map(A, B)$.
We write $D(X)$ for the strong dual of an object $X$ in a symmetric monoidal
category, if it exists.

We use homological notation for $t$-structures, see e.g. \cite[Section
1.2.1]{lurie-ha}. For any $\infty$-category $\scr C$ (other than stable
$\infty$-categories with a $t$-structure), we denote by $\scr C_{\le
0}$ the subcategory of $0$-truncated objects \cite[Definition
5.5.6.1]{lurie-htt}. In particular for a site $\scr C$,
$\Shv(\scr C)_{\le 0}$ denotes the $1$-category of sheaves of sets on $\scr C$.

\subsection*{Acknowledgements}
I would like to thank Fabien Morel for posing this problem, Marc Hoyois for
suggesting that the ``mystery twisting spectrum'' might have something to do
with ``$\lim_n K(\mu_{p^n}, 1)$'' and Thomas Nikolaus for suggesting that I
should study the pro-étale topology. I would also like to thank Maria Yakerson
for comments on a draft.

\section{Preliminaries}
\label{sec:prelim}

We collect some essentially well-known results.

\subsection{$p$-completion}
\label{subsec:p-completion}

Throughout we fix a stable, presentably symmetric monoidal
$\infty$-category $\scr C$ and a (strongly) dualizable
object $A$. For closely related results, see \cite[Sections 2 and
3]{mathew2017nilpotence}.
We let $\scr C[A^{-1}] = \{X \mid X \otimes A \wequi 0 \} \subset
\scr C$. Denote by $\scr C_{A-tors}$ the left orthogonal of $\scr C[A^{-1}]$,
and by $\scr C_A^\comp$ the right orthogonal.

\begin{lemma}
The inclusion $\scr C[A^{-1}] \subset \scr C$ is both reflective and
co-reflective.
\end{lemma}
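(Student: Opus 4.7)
The plan is to reduce both reflectivity and coreflectivity of the inclusion $\iota \colon \scr C[A^{-1}] \hookrightarrow \scr C$ to a single formal application of the adjoint functor theorem (HTT 5.5.2.9) for presentable $\infty$-categories. The key input is that $A$ is dualizable with dual $A^\vee$: this forces the endofunctor $F = (-) \otimes A \colon \scr C \to \scr C$ to have both a left adjoint and a right adjoint, namely $(-) \otimes A^\vee$ in each case. In particular $F$ preserves all small limits and all small colimits, and is accessible.

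Given this, the full subcategory $\scr C[A^{-1}] = F^{-1}(\{0\})$ is closed under all small limits and colimits in $\scr C$: if $F(X_i) \simeq 0$ for every $i$, then $F(\lim_i X_i) \simeq \lim_i F(X_i) \simeq 0$, and likewise for colimits. To apply the adjoint functor theorem I also need $\scr C[A^{-1}]$ to be presentable. For this I would invoke the standard fact that fibers of accessible functors between accessible $\infty$-categories are accessible (HTT 5.4.6.6/5.4.5.15), applied to $F$ over $0 \in \scr C$; combined with closure under all small colimits, accessibility upgrades to presentability.

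With these ingredients in place the conclusion is immediate. The inclusion $\iota$ is a functor between presentable $\infty$-categories that preserves small limits and is accessible, so by HTT 5.5.2.9 it admits a left adjoint, exhibiting $\scr C[A^{-1}]$ as reflective in $\scr C$. Dually, $\iota$ preserves small colimits, so the dual form of the adjoint functor theorem produces a right adjoint, exhibiting $\scr C[A^{-1}]$ as coreflective.

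The entire argument is formal; I do not anticipate any real obstacle. The only point that requires even a brief justification is presentability of $\scr C[A^{-1}]$, which as above is a direct consequence of the dualizability of $A$ together with general results on accessible subcategories.
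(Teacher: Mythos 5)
Your argument is correct and is essentially the paper's own proof: both use dualizability of $A$ to give $(-)\otimes A$ both adjoints, deduce that $\scr C[A^{-1}]$ is presentable and that the inclusion preserves limits and colimits, and then apply the adjoint functor theorem \cite[Corollary 5.5.2.9]{lurie-htt} in both directions. The only difference is the bookkeeping for presentability (accessible fibers versus the paper's appeal to \cite[Proposition 5.5.3.12]{lurie-htt}), which is immaterial.
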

\begin{proof}
The functor $\otimes A: \scr C \to \scr C$ has both a right
and a left adjoint (namely tensoring with $DA$), hence preserves limits and
colimits. It follows that $\scr C[A^{-1}]$ is presentable (e.g. use
\cite[Proposition 5.5.3.12]{lurie-htt}) and the inclusion
preserves limits and colimits. The result follows now by the adjoint functor
theorem \cite[Corollary 5.5.2.9]{lurie-htt}.
\end{proof}

By \cite[Remark 6]{barwick2016stablerecollement}, we thus have a recollement
situation, and in particular there is a canonical equivalence $\scr C_{A-tors}
\wequi \scr C_A^\comp$. We can identify $\scr C_{A-tors}, \scr C_A^\comp$ more
explicitly.

\begin{lemma}
\begin{enumerate}
\item The category $\scr C_{A-tors} \subset \scr C$ is the localising
  subcategory generated by objects of the form $DA \otimes X$ for $X \in \scr C$.
\item The category $\scr C_A^\comp$ is the localization of $\scr C$ at the maps
  $f: X \to Y \in \scr C$ such that $f \otimes A$ is an equivalence.
\end{enumerate}
\end{lemma}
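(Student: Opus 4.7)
The plan is to handle (1) by the standard duality trick plus the existence of a reflection onto the candidate subcategory, and to handle (2) by unwinding the definition of the class of maps being inverted.

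For (1), I would first verify the easy containment: any object of the form $DA \otimes X$ lies in $\scr C_{A\text{-}tors}$. Indeed, for $Z \in \scr C[A^{-1}]$, dualizability gives
\[ \Map(DA \otimes X, Z) \wequi \Map(X, A \otimes Z) \wequi \Map(X, 0) \wequi *, \]
so $DA \otimes X$ is left orthogonal to $\scr C[A^{-1}]$. Since $\scr C_{A\text{-}tors}$ is closed under colimits in $\scr C$ (its right orthogonal $\scr C[A^{-1}]$ is closed under limits, as $-\otimes A$ preserves limits), the localizing subcategory $\scr L$ generated by the $DA \otimes X$ is contained in $\scr C_{A\text{-}tors}$.

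For the reverse inclusion, note that $\scr L$ is presentable: picking a set of generators $\{X_\alpha\}$ for $\scr C$, the objects $DA \otimes X_\alpha$ generate $\scr L$ as a localizing subcategory (using that $DA \otimes -$ commutes with colimits). Hence the inclusion $\scr L \hookrightarrow \scr C$ admits a right adjoint, giving for every $Y \in \scr C$ a cofiber sequence $L_{\scr L} Y \to Y \to Y'$ with $L_{\scr L} Y \in \scr L$ and $Y' \in \scr L^\perp$. Right orthogonality to all $DA \otimes X$ forces $\Map(X, A \otimes Y') \wequi *$ for all $X$, so $A \otimes Y' \wequi 0$, i.e. $Y' \in \scr C[A^{-1}]$. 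If $Y \in \scr C_{A\text{-}tors}$, then the map $Y \to Y'$ must be null (by definition of left orthogonality), so from the cofiber sequence $Y$ is a retract of $L_{\scr L} Y$, hence in $\scr L$.

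For (2), let $W$ denote the class of maps $f$ with $f \otimes A$ an equivalence. Since $-\otimes A$ is exact, $f \otimes A$ is an equivalence if and only if $\mathrm{cofib}(f) \otimes A \wequi 0$, i.e. $\mathrm{cofib}(f) \in \scr C[A^{-1}]$. By the standard characterization of local objects in a stable setting, $Z \in \scr C$ is $W$-local if and only if $\Map(\mathrm{cofib}(f), Z) \wequi *$ for every $f \in W$, which happens if and only if $\Map(C, Z) \wequi *$ for every $C \in \scr C[A^{-1}]$, i.e. $Z \in \scr C_A^\comp$. Thus the $W$-local objects are exactly $\scr C_A^\comp$, and the Bousfield localization at $W$ (which exists by the first lemma and the recollement) is the reflection onto $\scr C_A^\comp$.

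I do not anticipate a real obstacle here: both statements are formal consequences of the recollement established in the previous lemma, duality of $A$, and the standard correspondence between localizing subcategories and Bousfield localizations in stable presentable settings. The only point requiring mild care is the presentability of $\scr L$, which is why I would make that step explicit.
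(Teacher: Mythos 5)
Your proposal is correct and takes essentially the same route as the paper: in (1) both arguments show the easy containment, use presentability of the localizing subcategory generated by the $DA \otimes X$ to obtain a right adjoint/reflection, and then use dualizability of $A$ (via $\Map(DA \otimes X, Z) \wequi \Map(X, A \otimes Z)$) to see that the complementary piece lies in $\scr C[A^{-1}]$ and conclude by orthogonality; in (2) both identify the local objects for the class of maps $f$ with $f \otimes A$ an equivalence, via the cofiber criterion, with the right orthogonal of $\scr C[A^{-1}]$. The only minor difference is how existence of the localization in (2) is justified --- the paper uses accessibility of $f \mapsto f \otimes \id_A$ on $\Fun(\Delta^1, \scr C)$, while you lean on the recollement already established --- and either is adequate.
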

\begin{proof}
(1) Let $\scr C'$ be the localizing subcategory generated by objects of the form
$DA \otimes X$. Clearly $\scr C' \subset \scr C_{A-tors}$. The inclusion $\scr
C' \to \scr C$ has a right adjoint by presentability. In order to conclude that
$\scr C' = \scr C_{A-tors}$ it suffices to prove that if $X \in \scr C_{A-tors}$
and $[Y \otimes DA, X] = 0$ for all $Y \in \scr C$, then $X \wequi 0$. The assumption
implies that $X \otimes A \wequi 0$, and hence $X \in \scr C[A^{-1}]$. The
result follows.

(2) The functor $\Fun(\Delta^1, \scr C) \to \Fun(\Delta^1, \scr C), f \mapsto f
\otimes \id_{A}$ is accessible. Hence the localization exists; denote it by
$L\scr C \subset \scr C$. Note that $f \otimes A$ is an equivalence if and only
if $cone(f) \otimes A \wequi 0$. Consequently $L\scr C$ is the right orthogonal
of $\scr C[A^{-1}]$. This concludes the proof.
\end{proof}

\begin{example}
Any stable symmetric monoidal $\infty$-category receives a symmetric monoidal
functor from finite spectra. In particular the case $A = \1/p$ always applies.
In this case $\scr C[A^{-1}]$ consists of the uniquely $p$-divisible objects,
$\scr C_{p-tors} := \scr C_{A-tors}$ consists of the $p$-torsion objects,
and $\scr C_p^\comp := \scr C_A^\comp$ consists of the $p$-complete objects.
In particular we see that if $\scr C$ is compactly generated then so is $\scr
C_{p-tors}$, and hence so is the equivalent category $\scr C_p^\comp$. We call
the maps $f$ such that $f \otimes A$ is an equivalence, i.e. such that $f/p$ is
an equivalence, \emph{$p$-equivalences}.
\end{example}

We have the following obvious but comforting results.

\begin{lemma}
Let $F: \scr C \to \scr D$ be any stable functor of stable $\infty$-categories.
Then $F$ preserves $p$-equivalences.
\end{lemma}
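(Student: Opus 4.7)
The plan is to unpack the definition of a $p$-equivalence and reduce the statement to two standard preservation properties of stable (i.e. exact) functors: preservation of cofiber sequences, and additivity on mapping sets. Recall that by definition $f\colon X\to Y$ is a $p$-equivalence iff $\mathrm{cone}(f)/p \simeq 0$, where $Z/p$ denotes the cofiber of the endomorphism $p\cdot \id_Z = \id_Z + \dots + \id_Z$. So the claim amounts to: $F$ sends the cofiber sequence $Z \xrightarrow{p} Z \to Z/p$ to one of the form $F(Z) \xrightarrow{p} F(Z) \to F(Z)/p$, and $F$ sends $\mathrm{cone}(f)$ to $\mathrm{cone}(F(f))$.

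First I would note that, since $F$ is assumed stable (preserves finite colimits, equivalently cofiber sequences), we get $F(\mathrm{cone}(f)) \simeq \mathrm{cone}(F(f))$ for any morphism $f$, and more generally $F$ commutes with the cofiber-of-a-self-map construction. The remaining point is to identify $F(p\cdot \id_X)$ with $p\cdot \id_{F(X)}$. This is the content of the folklore fact that a stable functor of stable $\infty$-categories induces an additive functor on homotopy categories: the abelian group structure on $[X,Y]$ is intrinsic to the stable structure (it arises, for example, from the double loop-space structure coming from the equivalence $\mathrm{Map}(X,Y)\simeq \Omega^2 \mathrm{Map}(X,\Sigma^2 Y)$), and $F$ preserves loops and the zero object, hence preserves the group operation. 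In particular $F(\id_X + \id_X) = \id_{F(X)} + \id_{F(X)}$, and iterating gives $F(p\cdot \id_X) = p\cdot \id_{F(X)}$.

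Combining the two points yields a natural equivalence $F(Z/p) \simeq F(Z)/p$. Applying this with $Z = \mathrm{cone}(f)$, together with $F(\mathrm{cone}(f)) \simeq \mathrm{cone}(F(f))$, we deduce $F(\mathrm{cone}(f)/p) \simeq \mathrm{cone}(F(f))/p$. If $f$ is a $p$-equivalence, the left side is $F(0)\simeq 0$, so the right side vanishes and $F(f)$ is a $p$-equivalence, as required.

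The only nontrivial ingredient is additivity of stable functors on homotopy categories, and it is the one mild obstacle; however it is entirely standard (and can be cited from \cite{lurie-ha}, where it is shown that the homotopy category of a stable $\infty$-category is canonically triangulated and exact functors induce triangulated functors, which are additive by construction). Everything else is a formal manipulation of cofiber sequences.
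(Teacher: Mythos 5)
Your argument is correct and is essentially the paper's proof: both boil down to the facts that a stable functor preserves cofiber sequences and is additive on homotopy categories (so $F(p\cdot\id_X)=p\cdot\id_{F(X)}$), giving $F(\mathrm{cone}(f)/p)\simeq \mathrm{cone}(F(f))/p$. The only cosmetic difference is that the paper phrases $p$-equivalence as $f/p\colon X/p\to Y/p$ being an equivalence rather than $\mathrm{cone}(f)/p\simeq 0$, which is an equivalent reformulation.
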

\begin{proof}
A map $\alpha: X \to Y$ is a $p$-equivalence if and only if $\alpha/p: X/p \to
Y/p$ is an equivalence. Here $p: X \to X$ denotes the sum of $p$ times the
identity map, and $X/p$ the cofiber. Similarly for $Y$. Being stable, $F$
preserves the identities, the $Ab$-enrichment of the homotopy categories, and
cofibers. The result follows.
\end{proof}

Consequently, $F$ canonically induces a functor $F_p^\comp = F: \scr C_p^\comp \to \scr
D_p^\comp$.

\begin{lemma} \label{lemm:p-completion-classical-subcat}
Let $F: \scr C \adj \scr D: G$ be an adjunction of stable $\infty$-categories,
with $F$ fully faithful. Then there is an induced adjunction $F_p^\comp: \scr
C_p^\comp \adj \scr D_p^\comp: G_p^\comp$, with $F_p^\comp$ still fully
faithful. The essential image of $F_p^\comp$ consists of those $X \in \scr
D_p^\comp$ such that $X/p \in \scr D$ lies in the essential image of
$F$.
\end{lemma}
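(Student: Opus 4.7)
The plan is to first set up the adjunction after $p$-completion, then verify full faithfulness, and finally pin down the essential image.

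For the adjunction, the first step is to check that $G$ preserves $p$-complete objects. Given $X \in \scr D_p^\comp$ and $Y \in \scr C$ with $Y/p \simeq 0$, the stability of $F$ gives $F(Y)/p \simeq F(Y/p) \simeq 0$, so $F(Y) \in \scr D[p^{-1}]$; then $[Y, G(X)] = [F(Y), X] = 0$ since $X$ is right orthogonal to $\scr D[p^{-1}]$. So $G$ restricts to $G_p^\comp: \scr D_p^\comp \to \scr C_p^\comp$. Next define $F_p^\comp := L_p \circ F|_{\scr C_p^\comp}$, where $L_p : \scr D \to \scr D_p^\comp$ is the $p$-completion localization. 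The adjunction $F_p^\comp \dashv G_p^\comp$ is then immediate: for $C \in \scr C_p^\comp$ and $X \in \scr D_p^\comp$,
\[ \Map(L_p F(C), X) \simeq \Map(F(C), X) \simeq \Map(C, G(X)) = \Map(C, G_p^\comp(X)), \]
using that $X$ is already $p$-complete.

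For full faithfulness of $F_p^\comp$, the key observation is that $G$ preserves $p$-equivalences (being a stable functor by the preceding lemma). For any $C' \in \scr C_p^\comp$, the unit map $F(C') \to L_p F(C')$ is a $p$-equivalence, so applying $G$ yields a $p$-equivalence $C' \simeq GF(C') \to G L_p F(C')$ in $\scr C$ (using $F$ fully faithful). Both source and target are $p$-complete, so this map is an equivalence. Chasing through the adjunction computation
\[ \Map_{\scr D_p^\comp}(F_p^\comp C, F_p^\comp C') \simeq \Map_{\scr C}(C, G L_p F C') \simeq \Map_{\scr C_p^\comp}(C, C') \]
then gives the result.

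For the essential image, one direction is straightforward: if $X \simeq F_p^\comp(C) = L_p F(C)$ then $X/p \simeq F(C)/p \simeq F(C/p)$, since $L_p$ is an equivalence after tensoring with $\1/p$. For the converse, assume $X \in \scr D_p^\comp$ and $X/p \simeq F(C_1)$ for some $C_1 \in \scr C$. Consider the counit $F G(X) \to X$ and let $Z$ be its cofiber; it suffices to show $Z$ vanishes after $p$-completion, for then $X \simeq L_p FG(X) = F_p^\comp(G_p^\comp X)$. Using that $F$ is fully faithful (so $GF \simeq \id$), we find $G(X)/p \simeq G(X/p) \simeq GF(C_1) \simeq C_1$, so that $FG(X)/p \simeq F(C_1) \simeq X/p$. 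Therefore $Z/p \simeq 0$, i.e.\ $Z \in \scr D[p^{-1}]$, and $L_p Z \simeq 0$, which finishes the argument.

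Overall this is largely a bookkeeping exercise; the only mildly nontrivial step is recognizing that the hypothesis ``$X/p$ lies in the image of $F$'' is precisely what is needed to force the cofiber of the counit to be uniquely $p$-divisible, and hence to vanish $p$-completely.
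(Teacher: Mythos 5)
Your proof is correct and follows essentially the same route as the paper: the adjunction and full faithfulness descend formally (the paper treats this as immediate from stability of $F$ and $G$), and the essential image is identified by checking that the counit $F_p^\comp G_p^\comp X \to X$ becomes an equivalence mod $p$, where it is the counit at $X/p \in \mathrm{im}(F)$. The only point worth making explicit in your last step is that the chain of equivalences $FG(X)/p \simeq X/p$ is realized by the counit map itself (by naturality of the counit and exactness of $F$ and $G$), which is exactly what forces $Z/p \simeq 0$; this is the content of the paper's ``true by assumption.''
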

\begin{proof}
Only the last statement requires proof. If $X = F_p^\comp(Y)$ then $X/p =
F(Y)/p$ is of the claimed form. Conversely, let $X \in \scr D_p^\comp$ with
$X/p$ in the essential image of $F$. We wish to show that $F_p^\comp G_p^\comp X \to X$ is an
equivalence, for which it suffices to show that it is a $p$-equivalence.
Since $F$, $G$ commute with taking the cofiber of multiplication by $p$, it thus
suffices to show that $FGX/p \to X/p$ is an equivalence. This is
true by assumption.
\end{proof}

\subsection{Sheaves of spectra}
\label{subsec:sheaves-of-spectra}
Given an $\infty$-topos $\scr X$ we denote by $\SH(\scr X)$ the category of
spectral sheaves on $\scr X$, i.e. limit-preserving functors $\scr X^\op \to
\SH$. This is a presentable $\infty$-category \cite[Remark 1.3.6.1]{lurie-sag}.
The category $\SH(\scr X)$ is equivalent to the stabilization of $\scr X$
\cite[Remark 1.3.3.2]{lurie-sag}, and so is in particular stable. One puts
\[ \SH(\scr X)_{\le 0} = \{E \in \SH(\scr X) \mid \Omega^\infty E \wequi * \}. \]
This defines the non-positive part of a right complete $t$-structure on
$\SH(\scr X)$, with heart the category of abelian group objects in $\scr X_{\le
0}$ \cite[Proposition 1.3.2.1]{lurie-sag}. We write $\ul{\pi}_i(E)$ for the
homotopy sheaves of $E \in \SH(\scr X)$. The $t$-structure is nondegenerate
provided that $\scr X$ is hypercomplete (see \cite[Section 6.5]{lurie-htt} for
this notion); if $\scr X$ is furthermore
locally of cohomological dimension $\le n$ for some $n$ then the $t$-structure
is left complete \cite[Corollary 1.3.3.11]{lurie-sag}. If $\scr X$ is an
$\infty$-topos we denote by $\scr X^\hyp$ its hypercompletion; somewhat
abusively if $\scr C$ is a site then we write $\Shv(\scr C^\hyp)$ instead of
$\Shv(\scr C)^\hyp$. Similarly $\SH(\scr C^\hyp)$ means $\SH(\Shv(\scr C^\hyp))
= \SH(\Shv(\scr C)^\hyp)$.
If $g^*: \scr X \to \scr Y$ is (the
left adjoint of) a geometric morphism, then there is an induced adjunction $g^*:
\SH(\scr X) \adj \SH(\scr Y): g_*$, with $g^*$ $t$-exact \cite[Remark
1.3.2.8]{lurie-sag}.

Recall the notion of \emph{coherent} and \emph{locally coherent} $\infty$-topoi
from \cite[Definition A.2.1.6]{lurie-sag}. Coherence is stable under
hypercompletion \cite[Proposition A.2.2.2]{lurie-sag}. If $\scr C$ is a
``finitary'' site (see \cite[Section A.3.1]{lurie-sag}), then $\Shv(\scr C)$ is
locally coherent and coherent \cite[Proposition A.3.1.3]{lurie-sag}. In
particular any ``reasonable'' topology on schemes yields a locally coherent topos,
and any object represented by a qcqs scheme is coherent.

It is well-known that if $\scr X$ is a locally coherent $\infty$-topos and $X \in \scr X$
is coherent, then sheaf cohomology on $X$ commutes with filtered colimits. The
next result is an equally well-known generalization of this.

\begin{lemma} \label{lemm:SH--compact}
Let $\scr X$ be a locally coherent topos and $X \in \scr X$ coherent.
Then $\map(\Sigma^\infty X_+, \bullet): \SH(\scr X)_{\le 0} \to
\SH$ commutes with filtered colimits.
\end{lemma}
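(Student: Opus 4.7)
The plan is to verify the statement on each homotopy group of the target spectrum separately. Since filtered colimits in $\SH$ are exact (computed levelwise on $\pi_n$), it suffices to show that for every $n \in \Z$ the functor $\pi_n \map(\Sigma^\infty X_+, -) : \SH(\scr X)_{\le 0} \to \mathrm{Ab}$ commutes with filtered colimits.

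For $n \ge 1$ this is immediate from the $t$-structure orthogonality: $\Sigma^n \Sigma^\infty X_+$ lies in $\SH(\scr X)_{\ge n}$ while $E \in \SH(\scr X)_{\le 0}$, so $\pi_n \map(\Sigma^\infty X_+, E) = [\Sigma^n \Sigma^\infty X_+, E] = 0$ identically. More generally, for any $F \in \SH(\scr X)_{\le m}$ one has $\pi_k \map(\Sigma^\infty X_+, F) = 0$ whenever $k > m$.

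For $n \le 0$, I would reduce to the heart by Postnikov truncation. From the cofiber sequence $\tau_{\ge n} E \to E \to \tau_{\le n-1} E$, the previous paragraph applied to $\tau_{\le n-1} E \in \SH(\scr X)_{\le n-1}$ yields $\pi_k \map(\Sigma^\infty X_+, \tau_{\le n-1} E) = 0$ for $k \ge n$, whence the natural map $\pi_n \map(\Sigma^\infty X_+, \tau_{\ge n} E) \to \pi_n \map(\Sigma^\infty X_+, E)$ is an isomorphism. Moreover $\tau_{\ge n}$ itself preserves filtered colimits, since its cofiber $\tau_{\le n-1}$ is the left adjoint to the inclusion $\SH(\scr X)_{\le n-1} \hookrightarrow \SH(\scr X)$ and thus preserves all colimits. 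Therefore we may replace the filtered diagram $\{E_i\}$ by $\{\tau_{\ge n} E_i\}$, each term of which lies in the bounded subcategory $\SH(\scr X)_{[n, 0]}$.

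A finite induction on the length $-n \ge 0$, using the Postnikov fiber sequences $\tau_{\ge k+1} F \to \tau_{\ge k} F \to (\ul{\pi}_k F)[k]$ together with the five lemma for the long exact sequences in $\pi_\ast \map(\Sigma^\infty X_+, -)$, then reduces matters to the case $F = \ul{A}[0]$ for $\ul{A}$ in the heart of $\SH(\scr X)$. There the required statement becomes that the abelian sheaf cohomology $\H^{-n}(X, \ul{A})$ commutes with filtered colimits in $\ul{A}$. This is the standard consequence of the geometric hypotheses: for $X$ coherent in a locally coherent $\infty$-topos, $\infty$-topos cohomology commutes with filtered colimits of abelian sheaves degree by degree (cf.\ \cite[Proposition A.2.3.1]{lurie-sag}, or the classical SGA~4 statement in the $1$-topos setting). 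This coherence input is the only substantive content of the proof and the main (mild) obstacle; everything else is formal $t$-structure bookkeeping.
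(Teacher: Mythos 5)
Your proposal is correct in substance, but it takes a genuinely different route from the paper's. The paper argues at the level of spaces: it quotes the statement for filtered colimits of $n$-truncated objects of a locally coherent $\infty$-topos evaluated on a coherent object \cite[Corollary A.2.3.2(1)]{lurie-sag}, and combines it with the observation that $\Omega^\infty: \SH(\scr X) \to \scr X$ preserves filtered colimits (filtered colimits of $\Omega$-spectra are $\Omega$-spectra, since filtered colimits commute with finite limits in an $\infty$-topos), so that on coconnective objects the mapping spectrum $\map(\Sigma^\infty X_+, \bullet)$ is computed levelwise on truncated sheaves of spaces. You instead work homotopy group by homotopy group, perform a Postnikov d\'evissage down to the heart, and invoke commutation of abelian sheaf cohomology of a coherent object with filtered colimits; this is the same coherence input from Lurie's appendix (or the classical SGA~4 statement), just packaged through the abelian rather than the space-level result, at the cost of a longer bookkeeping argument. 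One justification should be repaired: $\tau_{\le n-1}$, viewed as an endofunctor of $\SH(\scr X)$, does \emph{not} preserve all colimits (the coconnective part is not closed under cofibers); what is true, and all you need, is that it preserves \emph{filtered} colimits, equivalently that $\SH(\scr X)_{\le n-1}$ is closed under filtered colimits in $\SH(\scr X)$ --- this is \cite[Proposition 1.3.2.7(2)]{lurie-sag}, the same fact the paper uses in the proof of Lemma \ref{lemm:SH-compact-objects}(3), and it is also what is silently used when you identify the Postnikov pieces of the colimit with the colimits of the Postnikov pieces and when you pass between colimits in $\SH(\scr X)_{\le 0}$ and in $\SH(\scr X)$. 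With that citation in place, your argument goes through.
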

\begin{proof}
Immediate consequence of the same statement for $n$-truncated spaces
\cite[Corollary A.2.3.2(1)]{lurie-sag}, using that $\Omega^\infty: \SH(\scr X)
\to \scr X$ preserves filtered colimits (since filtered colimits commute with
finite limits in any $\infty$-topos \cite[Example 7.3.4.7]{lurie-htt}, filtered colimits of
$\Omega$-spectra are $\Omega$-spectra).
\end{proof}

Recall that an object $X$ in an $\infty$-topos $\scr X$ is said to have
cohomological dimension $\le n$ for a family of sheaves $\{\scr F_\alpha \in
\SH(\scr X)^\heart\}_\alpha$ if $H^i(X, \scr F_\alpha) = 0$ for all $i > n$ and
all $\alpha$ \cite[Definition 2.8]{clausen2019hyperdescent}.
Recall also that $E \in \SH(\scr X)$ is said to be \emph{postnikov-complete} if
the natural map $E \to \lim_i E_{\le i}$ is an equivalence.

\begin{lemma} \label{lemm:SH-compact-objects}
Let $\scr X$ be an $\infty$-topos, $Y \in \SH(\scr X)$ postnikov complete, $X \in \scr X$ of
cohomological dimension $\le n$ for $\{\ul{\pi}_i(Y)\}_{i \in \Z}$. Then
\begin{enumerate}
\item We have $[\Sigma^\infty X_+, Y] \wequi [\Sigma^\infty X_+, Y_{\le n}]$.
\item If $Y \in \SH(\scr X)_{\ge m}$ then $\map(\Sigma^\infty X_+, Y) \in \SH_{\ge m-n}$.
\end{enumerate}
Suppose that postnikov towers converge in $\scr X$ and $X$ is of cohomological
dimension $\le n$.
Then also
\begin{enumerate}
\setcounter{enumi}{2}
\item $\Sigma^\infty X_+ \in \SH(\scr X)$ is compact.
\end{enumerate}
Alternatively, suppose that objects in $\SH(\scr X)$ of the form $Y/p$ are
postnikov complete, and $X$ is of $p$-cohomological dimension $\le n$.
Then
\begin{enumerate}
\setcounter{enumi}{3}
\item $\Sigma^\infty X_+/p \in \SH(\scr X)$ is compact.
\end{enumerate}
\end{lemma}
\begin{proof}
We will conflate $X$ and $\Sigma^\infty X_+$ for notational simplicity.

(1) By assumption $Y \wequi \lim_i Y_{\le i}$. Consequently we have the Milnor exact
sequence $0 \to \lim^1_i \pi_1 \Map(X, Y_{\le i}) \to \pi_0 \Map(X, Y) \to \lim
\pi_0 \Map(X, Y_{\le i}) \to 0$ \cite[Proposition
VI.2.15]{goerss2009simplicial}. It is thus enough to show that $\{\pi_1 \Map(X, Y_{\le
i})\}_i$ stabilizes, and $\{\pi_0 \Map(X, Y_{\le i})\}_i$ stabilizes at $i=n$. The
first statement follows from the second applied to $\Omega Y$. To prove the
second statement, it suffices to show that $[X, Y_{\le i+1}] \to [X, Y_{\le i}]$ is an
isomorphism for $i \ge n$. We have the fiber sequence $\ul{\pi}_{i+1}(Y)[i+1]
\wequi (Y_{\le i+1})_{\ge i+1} \to Y_{\le i+1} \to Y_{\le i}$ which induces an
exact sequence \[ [X, \ul{\pi}_{i+1}(Y)[i+1]] \to [X, Y_{\le i+1}] \to [X,
Y_{\le i}] \to [X, \ul{\pi}_{i+1}(Y)[i+2]]. \] The two outer terms vanish for $i
\ge n$ by assumption, whence the result.

(2) We need to prove that $[X[i], Y] = 0$ for $i < m-n$, or equivalently $[X,
Y[-i]] = 0$, or equivalently $[X, Y'] = 0$ for $Y' \in \SH(\scr
X)_{>n}$. But $[X, Y'] = [X, Y'_{\le n}] = [X, 0] = 0$, by (1). The result
follows.

(3) Let $\{Y_i\}_i$ be a filtered system in $\SH(\scr X)$. We have \[ [X, \colim_i
Y_i] \wequi [X, \tau_{\le n} \colim_i Y_i] \wequi [X, \colim_i \tau_{\le n} Y_i]
\wequi \colim_i [X, \tau_{\le n} Y_i] \wequi \colim_i [X, Y_i], \]
which is the desired result. Here we have used (1) for the first and last
equivalence, for the second equivalence we use that $\tau_{\le n}: \SH(\scr X)
\to \SH(\scr X)$ preserves filtered colimits by \cite[Proposition
1.3.2.7(2)]{lurie-sag}, and the third equivalence is Lemma
\ref{lemm:SH--compact}.

(4) Essentially the same argument applies, using that $[X/p, Y] \wequi [X,
Y/p[-1]]$ and $Y/p$ has $p^2$-torsion homotopy sheaves.
\end{proof}

If the assumption of condition (4) holds (i.e. every spectrum of the form $E/p$
is postnikov complete), then we shall say that $\SH(\scr X)$ is $p$-postnikov
complete.
Let us note the following fact.
\begin{remark} \label{rmk:postnikov-convergence}
Suppose that $\scr X$ is hypercomplete and there exists $d$ such that $\scr X$
is locally of ($p$-)cohomological dimension $\le d$.
Then $\SH(X)$ is ($p$-)postnikov complete.
This follows from \cite[Proposition 2.10]{clausen2019hyperdescent}.
\end{remark}

If $f: \scr C \to \scr D$ is a functor of $t$-categories, we call $f$ \emph{of
finite cohomological dimension} ($\le n$) if $f(\scr C_{\ge 0}) \subset \scr
D_{\ge -n}$. We will similarly say that $f$ is \emph{of finite $p$-cohomological
dimension} ($\le n$) if $f(\scr C_{\ge 0}/p) \subset \scr D_{\ge -n}$.
For example in the situation of Lemma
\ref{lemm:SH-compact-objects}, the functor $\map(\Sigma^\infty X_+, \bullet):
\SH(\scr X) \to \SH$ is of ($p$-)cohomological dimension $\le n$.

\begin{lemma} \label{lemm:coh-dimension-trick}
Let $f: \scr C \to \scr D$ be a stable functor between $t$-categories. Assume that $f$
is of finite ($p$-)cohomological dimension and vanishes on bounded above objects
(modulo $p$), and that $\scr D$ is non-degenerate (on $p$-torsion objects). Then $f \wequi 0$.
\end{lemma}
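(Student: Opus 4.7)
The plan is to use the Whitehead truncation in $\scr C$ to exhibit, for an arbitrary $X \in \scr C$, the image $f(X)$ as an object of arbitrarily high connectivity in $\scr D$, and then to appeal to non-degeneracy of the $t$-structure on $\scr D$.

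Concretely, I would fix an arbitrary $X \in \scr C$ and a constant $n$ with $f(\scr C_{\ge 0}) \subset \scr D_{\ge -n}$. For each integer $k$, the defining fibre sequence of the truncation is
\[ \tau_{\ge k+1} X \to X \to \tau_{\le k} X \]
in $\scr C$, and applying the stable (hence exact) functor $f$ produces a fibre sequence in $\scr D$. The hypothesis that $f$ vanishes on bounded above objects gives $f(\tau_{\le k} X) \wequi 0$, so we obtain an equivalence $f(\tau_{\ge k+1} X) \wequi f(X)$. Since $\tau_{\ge k+1} X \in \scr C_{\ge k+1}$, the finite cohomological dimension hypothesis on $f$ forces
\[ f(X) \wequi f(\tau_{\ge k+1} X) \in \scr D_{\ge k+1-n}. \]

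Letting $k$ range over all integers, this places $f(X)$ in $\bigcap_m \scr D_{\ge m}$, which vanishes by non-degeneracy of the $t$-structure on $\scr D$. Thus $f(X) \wequi 0$, as required. There is no real obstacle; the only minor points worth flagging are that the triangle used is the canonical one of the $t$-structure (so both endpoints automatically have the required boundedness), and that ``non-degenerate'' is being read in Lurie's sense, $\bigcap_m \scr D_{\ge m} = 0 = \bigcap_m \scr D_{\le m}$, which is the convention consistent with the parenthetical identification with hypercompleteness.
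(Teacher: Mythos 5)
Your argument is correct and is essentially the paper's own proof: apply $f$ to the truncation fibre sequence, use vanishing on the bounded above part to identify $f(X)$ with $f(\tau_{\ge k+1}X)$, conclude that $f(X)$ is $\infty$-connective via the finite cohomological dimension bound (using that $f$ commutes with shifts), and invoke non-degeneracy. The only difference is that you spell out the connectivity bookkeeping explicitly, which the paper leaves implicit.
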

\begin{proof}
Let $E \in \scr C$ and put $E' = E$ (respectively $E'=E/p$). For any
$i$ we have the fiber sequence $E'_{\ge i} \to E' \to E'_{<i}$ and hence we get
a fiber sequence $f(E'_{\ge i}) \to f(E') \to 0$. We conclude that $f(E')$ is $\infty$-connective
($f$ being of finite cohomological dimension), and hence zero.
\end{proof}

\subsection{Étale cohomological dimension}

We start with the following variant of \cite[Theorem 1.1.5]{cisinski2013etale}.
Here and everywhere in this article, cohomological dimension of schemes refers
to étale cohomological dimension.
\begin{theorem}[Gabber, Cisinski-D{\'e}glise]
\label{thm:noetherian-etale-dimension-bound}
Let $X$ be a quasi-separated, noetherian scheme of dimension $d$. Let $d' =
\pcd_p(X) := \sup_{x \in X} \cd_p(k(x))$ and $\pcd(X) = \sup_p \pcd_p(X)$.
Then $\cd_p(X) \le 3d + d' + 3$ and $\cd(X) \le 3d + \pcd(X) + 3$.
\end{theorem}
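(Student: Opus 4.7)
The plan is to adapt the argument of \cite[Theorem 1.1.5]{cisinski2013etale} (itself resting on results of Gabber) with minor modifications that produce the slightly weaker constants stated here. The core strategy is an induction on $d = \dim(X)$, using a Mayer--Vietoris reduction to the affine case, where a theorem of Gabber gives $\cd(\mathrm{Spec}(R)) \le \dim R + \pcd(R)$.

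First I would treat the base case $d = 0$: a quasi-separated noetherian $0$-dimensional scheme is a finite disjoint union of spectra of Artinian local rings, and its étale cohomology reduces to that of the residue fields, yielding $\cd(X) \le d' \le 3\cdot 0 + d' + 3$. For the inductive step, I would choose a finite affine cover $X = U_0 \cup \dots \cup U_N$ and apply the Čech-to-derived-functor spectral sequence
$$
E_2^{p,q} = \check H^p\bigl(\{U_i\}, \underline H^q_\et(-,F)\bigr) \Rightarrow H^{p+q}_\et(X,F),
$$
which gives $\cd(X) \le N + \sup_I \cd\bigl(\bigcap_{i \in I} U_i\bigr)$. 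Because $X$ is only assumed quasi-separated, the intersections $\bigcap_{i\in I} U_i$ need not be affine; however, they are quasi-compact open subschemes of affines, hence noetherian of dimension $\le d$, and can be dealt with by a secondary induction (on the size of a minimal affine cover). The innermost affine terms are then handled by Gabber's bound, contributing at most $d + d'$.

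The factor $3d + 3$ (rather than the cleaner $d + d'$) reflects the accumulated slack in this two-stage reduction: one $d$ from the nerve dimension of an affine cover of $X$, one $d$ from the recursive treatment of the (non-affine) intersections, and one $d$ from Gabber's affine input. The additive $+3$ absorbs the boundary contributions from the Čech complexes. The main obstacle, and the reason for the inflated constants, is precisely the bookkeeping needed to handle the quasi-separated (but not separated) hypothesis: affine intersections cannot be disposed of in a single step, and the secondary induction must be shown to terminate with a linear (rather than multiplicative) bound. Once this combinatorial accounting is carried out carefully, combining Gabber's affine bound with a Čech spectral sequence for each layer yields the claimed estimate $\cd(X) \le 3d + d' + 3$.
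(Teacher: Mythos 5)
Your proposed route has a genuine gap at its foundation: the affine input you invoke, namely that Gabber proved $\cd(\operatorname{Spec} R) \le \dim R + \pcd(R)$ for an arbitrary noetherian affine scheme and arbitrary torsion coefficients, is not an available theorem in this generality. The Gabber-type results that the argument actually rests on (via Illusie's expos\'e XVIII-A in the Travaux de Gabber) only control cohomology with coefficients killed by an integer \emph{invertible} on the scheme, and even then the known bounds are weaker than $\dim + \pcd$ --- this is exactly why the statement being proved carries the inflated constant $3d+d'+3$ rather than $d+d'$. For $p$-torsion on the characteristic-$p$ locus one only has the Artin bound $\cd_p \le \dim + 1$ (SGA4, Exp.\ X, Th.\ 5.1), and on a mixed-characteristic scheme such as $\operatorname{Spec}(\Z)$ neither result applies globally. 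The actual proof therefore first reduces (using that $X$ is qcqs, so \'etale cohomology commutes with filtered colimits) to constructible sheaves, then to sheaves of $\Q$- or $\Z/p$-modules, and for $\Z/p$-coefficients decomposes $X$ into the closed characteristic-$p$ part $Z$ and its open complement $U$, gluing the two regimes via the localization triangle $i_*Ri^!F \to F \to Rj_*j^*F$ together with the vanishing $R^bj_*j^*F = 0$ for $b > 2d+d'$, which bounds the amplitude of $Ri^!F$. None of this coefficient-sensitivity or characteristic splitting appears in your argument, and without it there is no correct affine (or invertible-coefficient) statement to feed into your induction.

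A secondary problem is the combinatorial accounting. Your estimate charges only ``$d$'' to the nerve of an affine cover of $X$, i.e.\ it presumes that a quasi-separated noetherian scheme of dimension $d$ can be covered by roughly $d+1$ affine opens; this is available for quasi-projective or quasi-affine schemes over an affine base, but is not known (and should not be expected) for general quasi-separated, possibly non-separated, noetherian schemes, so the linear term $3d+3$ cannot be extracted from the \v{C}ech spectral sequence in the way you describe. The inner layer of your induction (quasi-compact opens of affines, which are separated and quasi-affine, so that iterated intersections stay affine) is unproblematic, but the outer layer is not. In short, the two-stage \v{C}ech induction is a plausible-looking skeleton, but the base theorem you cite does not exist as stated, and the cover-size bound driving your constants is unjustified; the paper's proof avoids both issues by working with constructible $\Q$- and $\Z/p$-coefficients and splitting off the characteristic-$p$ locus instead of inducting over affine covers.
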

\begin{proof}
We use the ideas of \cite[Theorem 1.1.5]{cisinski2013etale}. Let $D = 3d + d' +
3$ (respectively $D= 3d + \pcd(X) + 3$).
We need to show that $\H^i_\et(X, F) = 0$ for every $p$-torsion (respectively
every) étale sheaf (of abelian groups) $F$ on $X$ and every $i > D$. Since $X$ is qcqs, étale cohomology commutes
with filtered colimits of sheaves. Hence we may assume that $F$ is
constructible. As in the reference, we may reduce to $F$ being a sheaf of
$\Z/p$-modules (respectively, $\Z/p$-modules for some prime $p$ or $\Q$-modules).
For the case of a $\Q$-module
the claim follows from \cite[Lemma 1.1.4]{cisinski2013etale}. Hence suppose that
$F$ is a $\Z/p$-module. Let $Z = X
\otimes_\Z \Z/p$ and $U = X \setminus Z$. Let $i$ be the closed and $j$ the open
immersion. We have the distinguished triangle (*) $i_* Ri^! F \to F \to Rj_* j^* F$.
From this we get in particular that $R^b j_* j^* F \wequi i_* R^{b+1} i^! F$ for $b
\ge 1$. We have $R^b j_* j^* F = 0$ for $b > 2d+d'$ (see the reference; this
uses \cite[Lemma XVIII-A.2.2]{illusie2014travaux}) and hence $Ri^! F$ is concentrated in
cohomological degrees $\le 2d + d' + 2$. Considering (*), it is enough to show
that $\H_\et^i(Z, Ri^! F) = 0$ and $\H^i_\et(U, j^* F) = 0$ for $i > D$. For $U$, this is
\cite[Lemma XVIII-A.2.2]{illusie2014travaux} again. For $Z$ we use that
$\cd_p(Z) \le \dim{Z} + 1 \le d + 1$ \cite[Theorem X.5.1]{sga4}
(and \cite[Tag 02UZ]{stacks-project}).
\end{proof}

Abstracting from this, we will make use of the following notion.
\begin{definition} \label{def:uniformly-bounded}
We say that a scheme $S$ is of uniformly bounded $p$-étale cohomological dimension $\le n$ if
for every finitely-presented, qcqs étale scheme $Y/X$ we have $\cd_p(Y) \le n$;
uniformly bounded étale cohomological dimension $\le n$ is defined similarly.
\end{definition}

We recall the following well-known facts.
\begin{lemma} \label{lemm:pcd-extension}
Let $f: X \to S$ be a morphism of schemes.
\begin{enumerate}
\item If $f$ is quasi-finite, then $\pcd_p(X) \le \pcd_p(S)$ and $\pcd(X) \le \pcd(S)$.
\item If $f$ is finite type and $S$ is quasi-compact, then $\pcd_p(X) < \infty$
  (respectively $\pcd(X) < \infty$) as soon as $\pcd_p(S) < \infty$ (respectively
  $\pcd(S) < \infty$).
\end{enumerate}
\end{lemma}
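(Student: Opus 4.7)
The plan is to reduce both parts to standard Galois-cohomological facts about field extensions, namely:
\begin{enumerate}
\item[(a)] For a finite extension $L/K$, $\cd(L) \le \cd(K)$, since $G_L$ is an open, finite-index subgroup of $G_K$.
\item[(b)] For a finitely generated extension $L/K$ of transcendence degree $d$, $\cd(L) \le \cd(K) + d$ (classically due to Serre, \emph{Cohomologie Galoisienne} II.\S 4.2, with the usual adjustment at the residue characteristic).
\end{enumerate}

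For (1), I would argue pointwise. Given $x \in X$ with image $s = f(x) \in S$, quasi-finiteness of $f$ says that $x$ is an isolated point of the fiber $X_s$, so $\mathcal{O}_{X_s, x}$ is Artinian and hence $k(x)/k(s)$ is a finite field extension. Applying (a) gives $\cd(k(x)) \le \cd(k(s)) \le \pcd(S)$, and taking the supremum over $x \in X$ yields $\pcd(X) \le \pcd(S)$.

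For (2), the additional ingredient is a uniform bound on the transcendence degrees of the residue-field extensions. Since $f$ is of finite type and $S$ is quasi-compact, $X$ is quasi-compact and admits a finite cover by affine opens $\mathrm{Spec}(A_i) \to \mathrm{Spec}(B_i) \subset S$, where each $A_i$ is generated as a $B_i$-algebra by some number $n_i$ of elements. Setting $N = \max_i n_i$, for every $x \in X$ lying in some $\mathrm{Spec}(A_i)$ with image $s$, the extension $k(x)/k(s)$ is finitely generated of transcendence degree at most $N$. Applying (b) then gives $\cd(k(x)) \le \cd(k(s)) + N \le \pcd(S) + N$, whence $\pcd(X) \le \pcd(S) + N < \infty$.

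The argument is essentially bookkeeping once (a) and (b) are granted; the only point that deserves care is the verification that quasi-finite morphisms really do have finite residue-field extensions, which follows from the observation that a point isolated in its fiber has Artinian local ring on that fiber.
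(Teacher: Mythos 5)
Your proof is correct and follows essentially the same route as the paper: reduce to residue-field extensions and apply the bound $\cd(L) \le \cd(K) + \operatorname{trdeg}(L/K)$ (the paper cites Shatz, you cite Serre), with quasi-finiteness giving finite residue extensions in (1) and finite type over a quasi-compact base giving a uniform transcendence-degree bound in (2). You merely spell out the bookkeeping that the paper leaves implicit, so there is nothing to correct.
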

\begin{proof}
This follows from the fact that if $L/K$ is an extension of fields, then $\cd_p(L)
\le \cd_p(K) + trdeg(L/K)$ \cite[Theorem 28 of Chapter
4]{shatz1972profinite}.\NB{See \cite[Tag 01TG]{stacks-project} for  finiteness
of residue field extension in (1).}
\end{proof}

\begin{corollary} \label{cor:uniformly-finite-etale-dimension-example}
If $X$ is noetherian, $\dim{X} < \infty$ and $\pcd(X) < \infty$ (respectively
$\pcd_p(X) < \infty$), then $X$ is
of uniformly bounded ($p$-)étale cohomological dimension. If $Y/X$ is finite type,
then also $Y$ is uniformly of bounded ($p$-)étale cohomological dimension.
\end{corollary}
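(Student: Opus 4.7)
The plan is to derive both statements directly from Theorem \ref{thm:noetherian-etale-dimension-bound}, combined with the behaviour of $\pcd$ under (quasi-)finite and finite type morphisms recorded in Lemma \ref{lemm:pcd-extension}. The key observation is that the bound $\cd(Y) \le 3 \dim Y + \pcd(Y) + 3$ is uniform whenever $\dim Y$ and $\pcd(Y)$ are bounded uniformly across the relevant $Y$, and both quantities are well-behaved under the morphisms at issue.

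For the first claim, let $Y \to X$ be finitely presented, qcqs and étale. Since $X$ is noetherian and $Y$ is étale (in particular locally of finite type) over $X$, $Y$ is noetherian; it is also quasi-separated. Since an étale morphism is flat of relative dimension zero, $\dim Y \le \dim X < \infty$. Étale morphisms are quasi-finite, so Lemma \ref{lemm:pcd-extension}(1) yields $\pcd(Y) \le \pcd(X) < \infty$. Applying Theorem \ref{thm:noetherian-etale-dimension-bound}, we obtain
\[ \cd(Y) \le 3 \dim Y + \pcd(Y) + 3 \le 3 \dim X + \pcd(X) + 3, \]
a bound independent of $Y$; this is precisely uniform boundedness of étale cohomological dimension for $X$.

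For the second claim, it suffices to verify that if $Y/X$ is of finite type then $Y$ itself satisfies the hypotheses of the first claim. Finite type over a noetherian base is noetherian. For finite-dimensionality, since $X$ is quasi-compact we cover $Y$ by finitely many affines, each of finite type over an affine open of $X$; any finitely generated algebra over a noetherian ring of finite Krull dimension has finite Krull dimension, so $\dim Y < \infty$. Finally $\pcd(Y) < \infty$ by Lemma \ref{lemm:pcd-extension}(2). The first part then applies with $Y$ in place of $X$, yielding the second claim.

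The argument is essentially formal, and no real obstacle is anticipated: the whole content is checking that $\dim$ and $\pcd$ propagate correctly along étale and finite type morphisms, after which the uniform bound is an immediate application of Theorem \ref{thm:noetherian-etale-dimension-bound}.
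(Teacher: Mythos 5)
Your proof is correct and follows essentially the same route as the paper: reduce the second statement to the first via Lemma \ref{lemm:pcd-extension}(2) together with the standard facts that finite type over a noetherian finite-dimensional base preserves noetherianity and finite dimension, and then prove the first statement by combining Lemma \ref{lemm:pcd-extension}(1), the inequality $\dim Y \le \dim X$ for étale $Y/X$, and Theorem \ref{thm:noetherian-etale-dimension-bound} to get the uniform bound $3\dim X + \pcd(X) + 3$.
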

\begin{proof}
By Lemma \ref{lemm:pcd-extension}(2), $Y$ satisfies the same
assumptions as $X$\NB{Locally on $X$, $Y$ can be covered by closed subschemes of
$\A^n_X$. Quasicompactness of $X$ then implies that $Y$ has finite dimension.
$Y$ is noetherian since a f.g. $A$-algebra with $A$ noetherian is noetherian}.
Hence we may assume that $Y=X$.

Let $U/X$ be étale. We have $\pcd_p(U) \le \pcd_p(X)$ by Lemma
\ref{lemm:pcd-extension}(1).
Since also $\dim{U} \le \dim{X}$,
it follows from Theorem \ref{thm:noetherian-etale-dimension-bound}
that if $U$ is moreover quasi-separated and finite type (whence noetherian), we
have $\cd_p(U) \le \pcd_p(X) + 3 \dim{X} + 3$. The argument for $\cd$ in place
of $\cd_p$ is the same. This concludes the proof.
\end{proof}

\begin{example} \label{ex:uniformly-bounded-dim}
The assumptions of Corollary \ref{cor:uniformly-finite-etale-dimension-example}
hold for $X = Spec(R)$, where $R$ is a field of finite étale cohomological
dimension (e.g. separably closed fields, finite fields, unorderable number
fields), or a strictly henselian noetherian
local ring \cite[Lemma XVIII-A.1.1]{illusie2014travaux}.
They hold étale-locally on $X = Spec(R)$ for $R$ a field of finite virtual étale
cohomological dimension (e.g. number fields), or $R = \Z$ (cover $Spec(\Z)$ by
$Spec(\Z[1/2, x]/x^2 + 1) \coprod Spec(\Z[1/3, x]/x^2 + x + 1)$\NB{The
discriminants are -4 and -3.}).
\end{example}

Let us note the following permanence property.

\begin{lemma} \label{lemm:uniform-dim-permanence}
Let $\{S_i\}_i$ be a pro-(qcqs scheme) with finitely presented, affine étale transition morphisms.
If $S_0$ is of uniformly bounded ($p$-)étale cohomological dimension, then so is
$S := \lim_i S_i$.
\end{lemma}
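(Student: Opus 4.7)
The plan is to use standard limit arguments for schemes and étale cohomology to reduce the claim over $S$ to the corresponding statement over each $S_i$, where the hypothesis applies.

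First I would take a finitely presented, qcqs étale morphism $Y \to S$ and show that it descends: by the standard approximation results (EGA IV §8, or \cite[Tag 07RP]{stacks-project}), since $S = \lim_i S_i$ with affine transition maps and each $S_i$ is qcqs, there exists some index $i$ and a finitely presented étale morphism $Y_i \to S_i$ with $Y \simeq Y_i \times_{S_i} S$. For $j \ge i$ set $Y_j := Y_i \times_{S_i} S_j$, which is finitely presented étale over $S_j$. Composing with the finitely presented affine étale map $S_j \to S_0$ shows $Y_j$ is finitely presented qcqs étale over $S_0$, so the hypothesis on $S_0$ gives $\cd(Y_j) \le n$ for every $j \ge i$.

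Next I would reduce the vanishing $\H^k_\et(Y, F) = 0$ for $k > n$ to the case that $F$ is constructible. Since $Y$ is qcqs (being finitely presented over the qcqs scheme $S$), every abelian étale sheaf is a filtered colimit of constructible sheaves, and étale cohomology on a qcqs scheme commutes with such colimits. Once $F$ is constructible, by another limit argument (SGA 4 IX.2.7 / \cite[Tag 09YU]{stacks-project}) we may, after enlarging $i$, assume $F = F_i|_Y$ for some constructible sheaf $F_i$ on $Y_i$; set $F_j := F_i|_{Y_j}$.

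Finally, the limit formula for étale cohomology of limits of qcqs schemes with affine transition maps (SGA 4 VII.5.8) gives
\[ \H^k_\et(Y, F) \simeq \colim_{j \ge i} \H^k_\et(Y_j, F_j). \]
The right-hand side vanishes for $k > n$ by the bound $\cd(Y_j) \le n$ established above, concluding the argument. The only real subtlety is keeping track that the descent of $Y$ and of $F$ produces objects which are genuinely finitely presented étale over $S_0$ (so that the hypothesis applies uniformly in $j$), but this is immediate from the assumption that the transition morphisms are themselves finitely presented (affine) étale.
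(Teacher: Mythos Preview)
Your proof is correct and follows essentially the same approach as the paper's: descend the \'etale scheme $Y/S$ to some $Y_i/S_i$, observe that each $Y_j$ is finitely presented qcqs \'etale over $S_0$ and hence has $\cd \le n$, then invoke continuity of \'etale cohomology for cofiltered limits of qcqs schemes (the paper cites \cite[Theorem VII.5.7]{sga4}). The paper's proof compresses your steps into a single sentence, leaving the reduction to constructible sheaves and their descent implicit; your version makes these standard intermediate steps explicit, which is a reasonable expansion of the same argument rather than a different route.
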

\begin{proof}
Suppose $S_0$ is of uniformly bounded $p$-étale cohomological dimension $\le n$.
Then the same holds for $S_i$ for all $i$. If $X/S$ is étale and qcqs,
then $X = \lim_i X_i$ for some system of qcqs étale schemes $X_i/S_i$.
Since étale cohomology of qcqs schemes commutes with cofiltered limits
\cite[Theorem VII.5.7]{sga4}, we conclude that $\cd_p(X) \le n$. The argument
for étale cohomological dimension is the same. The result
follows.
\end{proof}

Let us summarize the following convenient properties.
\begin{lemma} \label{lemm:uniformly-bounded-compact-generation}
Let $X$ be (étale locally) of uniformly bounded étale cohomological dimension.
Then $\SH(X_\et^\hyp)$ is postnikov-complete and
compactly generated. In fact if $Y \in X_\et$ is qcqs (of finite étale
cohomological dimension), then $\Sigma^\infty_+ Y$ is compact.

If $X$ is only (étale locally) of uniformly bounded $p$-étale cohomological
dimension, then $\SH(X_\et^\hyp)$ is $p$-postnikov complete and
$\SH(X_\et^\hyp)_p^\comp$ is compactly generated. In fact if $Y \in X_\et$ is
qcqs (of finite $p$-étale
cohomological dimension), then $\Sigma^\infty_+ Y/p \in \SH(X_\et^\hyp)_p^\comp$
is compact.
\end{lemma}
\begin{proof}
The ($p$-)postnikov completeness follows from Remark
\ref{rmk:postnikov-convergence}.
Lemma \ref{lemm:SH-compact-objects}(3,4) now show that $\Sigma^\infty_+Y$
(respectively $\Sigma^\infty_+Y/p$) is compact in $\SH(X_\et^\hyp)$.
We deduce that $\Sigma^\infty_+Y/p \in \SH(X_\et^\hyp)_p^\comp$ is compact, as
follows:
\begin{align*}
[Y/p, \colim_i T_i]_{\SH(X_\et^\hyp)_p^\comp}
  &\wequi [Y[1], (\colim_i T_i)/p]_{\SH(X_\et^\hyp)_p^\comp} \\
  &\wequi [Y[1], (\colim_i T_i)/p]_{\SH(X_\et^\hyp)} \\
  &\wequi [Y/p, \colim_i T_i]_{\SH(X_\et^\hyp)} \\
  &\wequi \colim_i [Y/p, T_i]_{\SH(X_\et^\hyp)} \\
  &\wequi \colim_i [Y[1], T_i/p]_{\SH(X_\et^\hyp)} \\
  &\wequi \colim_i [Y[1], T_i/p]_{\SH(X_\et^\hyp)_p^\comp} \\
  &\wequi \colim_i [Y/p, T_i]_{\SH(X_\et^\hyp)_p^\comp}
\end{align*}
In other words we use that the forgetful functor $\SH(X_\et^\hyp)_p^\comp \to
\SH(X_\et^\hyp)$ commutes with the formation of $(\colim_i \ph)/p$.
\end{proof}

\section{Construction of the twisting spectrum}
\label{sec:proet}

We will make use of some of the convenient properties of the pro-étale topology
\cite{bhatt2013pro}. Recall that a morphism of schemes $X \to Y$ is called
\emph{weakly étale} if $X \to Y$ is flat and $\Delta: X \to X \times_Y X$ is
also flat. The pro-étale site of $X$ consists of those weakly étale $X$-schemes
of cardinality smaller than some (large enough) bound; the coverings are the
fpqc coverings. We denote the pro-étale topos of $X$ (with respect to the
implicit cardinality bound) by $X_\proet$.

We spell out some salient properties.

\begin{lemma} \label{lemm:weakly-contractible-dim}
Let $\scr X$ be a topos.
If $X \in \scr X$ is a weakly contractible \cite[Definition 3.2.1]{bhatt2013pro}
object, then $X$ has cohomological dimension $0$.
\end{lemma}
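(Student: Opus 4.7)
The plan is to prove directly that the global sections functor
$\Gamma(X, -): \Shv(\scr X_{/X}, \mathrm{Ab}) \to \mathrm{Ab}$ is exact; once this is established, its right-derived functors (which by definition compute $\H^i(X, -)$) vanish for $i > 0$, giving $\cd(X) \le 0$.

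Since $\Gamma(X, -)$ is always left exact, it suffices to show right exactness. Let $F \to F''$ be an epimorphism of abelian sheaves on $\scr X_{/X}$ and let $s \in F''(X)$ be a global section. By definition of epimorphism in a topos, there exists a covering $Y \to X$ (i.e.\ an effective epimorphism in $\scr X$) together with a lift $\tilde s \in F(Y)$ of $s|_Y$. The key input is the definition of weak contractibility: every effective epimorphism $Y \to X$ admits a section $\sigma: X \to Y$. Pulling back $\tilde s$ along $\sigma$ yields $\sigma^* \tilde s \in F(X)$, whose image in $F''(X)$ equals $\sigma^*(s|_Y) = s$. Hence $\Gamma(X, F) \to \Gamma(X, F'')$ is surjective.

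The only subtle point is to confirm that the notion of ``weakly contractible'' in \cite[Definition 3.2.1]{bhatt2013pro} really amounts to the lifting property against effective epimorphisms in the ambient $\infty$-topos (rather than some weaker or stronger condition), so that the splitting of $Y \to X$ is available in the argument above. Modulo this bookkeeping, the argument is entirely formal and requires no serious calculation; I do not anticipate any real obstacle.
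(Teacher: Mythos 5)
Your proof is correct and is essentially the paper's argument: exactness of $\H^0(X,-)$ is shown by lifting a section locally and then using that, by the very definition of weak contractibility, any surjection onto $X$ splits, so the local lift can be pulled back along the section. The only (harmless) elision is that epimorphy of $F \to F''$ a priori gives a covering \emph{family} $\{U_\alpha \to X\}$ with local lifts; the paper makes explicit the step of assembling it into a single surjection $\coprod_\alpha U_\alpha \to X$ carrying a lift in $\prod_\alpha F(U_\alpha)$, which is exactly your single cover $Y \to X$, and the ``bookkeeping'' you flag is indeed fine since \cite[Definition 3.2.1]{bhatt2013pro} says precisely that every surjection onto a weakly contractible object splits.
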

\begin{proof}
It suffices to prove that $\H^0(X, \bullet)$ is exact. Thus let $f: F \to G$ be a
surjection of sheaves and $a \in G(X)$. Then there is a covering $\{U_\alpha \to
X\}_\alpha$ and $\{a_\alpha \in F(U_\alpha)\}_\alpha$ such that $f(a_\alpha) =
a|_{U_\alpha}$. Let $\tilde{X} = \coprod_\alpha U_\alpha$, where the coproduct
is taken in the 1-topos $\scr X$. We hence obtain
$\tilde a \in F(\tilde{X}) \wequi \prod_\alpha F(U_{\alpha})$. Now $p: \tilde{X} \to X$ is a
surjection, hence has a section $s$ (by definition of weak contractibility).
We find that $f(s^* \tilde a) = s^* f(\tilde a) = s^* p^* a = a$. This concludes the proof.
\end{proof}

\begin{remark}[Hoyois]
The notion of a weakly contractible object in a $1$-topos \cite[Definition 3.2.1]{bhatt2013pro}
is essentially the same as that of an object of homotopy dimension $\le 0$
in an $\infty$-topos \cite[Definition 7.2.1.1]{lurie-htt}.
Lemma \ref{lemm:weakly-contractible-dim}
above is essentially a special case of \cite[Corollary 7.2.2.30]{lurie-htt}.
\end{remark}

\begin{corollary}
For any scheme $X$, postnikov towers converge in $\Shv(X_\proet^\hyp)$ and
$\SH(X_\proet^\hyp)$ is left-complete and compactly generated.
\end{corollary}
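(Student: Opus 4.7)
The plan is to exploit the fundamental theorem of Bhatt-Scholze that the pro-étale site $S_\proet$ admits a basis of weakly contractible affines. By Lemma \ref{lemm:weakly-contractible-dim} (and the Hoyois remark reinterpreting this in $\infty$-topos language), each such basis object is of cohomological dimension $0$, equivalently of homotopy dimension $\le 0$ in the hypercompletion.

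For convergence of Postnikov towers in $\Shv(S_\proet^\hyp)$, I would first pick a cover $\{X_i \to *\}_i$ of the terminal object by weakly contractible affines. The slice $\Shv(S_\proet^\hyp)_{/X_i}$ is equivalent to $\Shv((X_i)_\proet^\hyp)$, which again admits a basis of weakly contractible objects, so in particular is locally of homotopy dimension $\le 0$ and hypercomplete. By \cite[Proposition 7.2.1.10]{lurie-htt}, Postnikov towers converge in any hypercomplete $\infty$-topos that is locally of finite homotopy dimension, so they converge in each slice. Lemma \ref{lemm:postnikov-complete-local} then lets me globalize and conclude convergence in $\Shv(S_\proet^\hyp)$.

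For the remaining two claims, I would argue exactly as in Lemma \ref{lemm:uniformly-bounded-compact-generation}. Left completeness of the $t$-structure on $\SH(S_\proet^\hyp)$ follows from \cite[Corollary 1.3.3.11]{lurie-sag}, since $\Shv(S_\proet^\hyp)$ is locally of cohomological dimension $\le 0$. Compact generation follows by observing that the weakly contractible affines form a set of generators of $\Shv(S_\proet^\hyp)$ (within the implicit cardinality bound), hence their suspension spectra $\Sigma^\infty_+ X$ generate $\SH(S_\proet^\hyp)$ under colimits. Each such $X$ is qcqs and of cohomological dimension $0$, and Postnikov towers converge in the slice over $X$, so Lemma \ref{lemm:SH-compact-objects}(3) shows that $\Sigma^\infty_+ X$ is compact.

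The main obstacle, as far as I can see, is the bookkeeping around the cardinality bound built into the pro-étale site: one must verify that a \emph{set} of weakly contractible affines suffices to generate, and that passing to slices preserves the relevant properties of the site. Everything else is an assembly of results already in the excerpt together with the cited facts from Lurie.
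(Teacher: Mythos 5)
Your proposal is correct and follows essentially the same route as the paper: both rest on the Bhatt--Scholze fact that $S_\proet$ is generated by coherent weakly contractible objects, combined with Lemma \ref{lemm:weakly-contractible-dim}, the Lurie results on Postnikov convergence and left completeness, and Lemma \ref{lemm:SH-compact-objects}(3) for compact generation. The only cosmetic difference is that you route Postnikov convergence through slices via Lemma \ref{lemm:postnikov-complete-local}, whereas the paper applies \cite[Corollary 1.3.3.11]{lurie-sag} directly.
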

\begin{proof}
$X_\proet$ is generated by coherent weakly contractible objects
\cite[Proposition 4.2.8]{bhatt2013pro}. Now apply \cite[Corollary
1.3.3.11]{lurie-sag} and Lemmas
\ref{lemm:weakly-contractible-dim} and \ref{lemm:SH-compact-objects}.
\end{proof}

We have a canonical geometric morphism $\nu^*: X_\et \to X_\proet$. The functor
$\nu^*: \Shv(X_\et)_{\le 0} \to \Shv(X_\proet)_{\le 0}$ is fully faithful \cite[Lemma
5.1.2]{bhatt2013pro}. A sheaf $F \in \Shv(X_\proet)_{\le 0}$ is called
\emph{classical} if it is in the essential image of $\nu^*$. Note that a sheaf
is classical if and only if it is continuous for pro-étale systems of affine
schemes \cite[Lemma 5.1.2]{bhatt2013pro}. For a $t$-category $\scr C$, we denote
by $\scr C_{-}$ the subcategory of (homologically) bounded above objects.

\begin{lemma} \label{lemm:proet-ff}
The functor $\nu^*: \SH(X_\et^\hyp)_- \to \SH(X_\proet^\hyp)_-$ is fully
faithful. If postnikov towers converge in $X_\et^\hyp$, then the functor
$\nu^*: \SH(X_\et^\hyp) \to \SH(X_\proet^\hyp)$ is also fully faithful.
\end{lemma}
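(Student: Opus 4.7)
The plan is to show that the unit map $F \to \nu_* \nu^* F$ is an equivalence on the relevant subcategories, which gives full faithfulness of $\nu^*$. Throughout I use that $\nu^*$ is $t$-exact: as the inverse image of a geometric morphism it preserves finite limits (hence truncation functors), and as a left adjoint it preserves colimits, so $\pi_n(\nu^* F) \simeq \nu^* \pi_n F$ for all $n$.

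The heart-level key fact is that for $A \in \Shv(X_\et)^\heart$ a sheaf of abelian groups on $X_\et$, the derived unit $A \to R\nu_* \nu^* A$ is an equivalence. Indeed, by \cite[Proposition 4.2.8]{bhatt2013pro}, $X_\proet$ admits a basis $\scr B$ of coherent weakly contractible objects, and by Lemma \ref{lemm:weakly-contractible-dim} each $V \in \scr B$ has cohomological dimension zero. Hence $R^p \nu_* \nu^* A$ is the sheafification of $U \mapsto H^p_\proet(U, \nu^* A)$, which vanishes for $p > 0$ when computed on the basis $\scr B$; while $R^0 \nu_* \nu^* A = \nu_* \nu^* A \simeq A$ by fully faithfulness of $\nu^*$ on 0-truncated sheaves \cite[Lemma 5.1.2]{bhatt2013pro}.

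For the first statement, with $F \in \SH(X_\et^\hyp)_-$ bounded above: I check the unit is an equivalence on homotopy sheaves. Consider the Postnikov/descent spectral sequence $E_2^{p,q} = R^p \nu_*(\pi_q \nu^* F) \Rightarrow \pi_{q-p}(\nu_* \nu^* F)$. Boundedness of $F$ from above ensures convergence (with $q \le N$ and $p \ge 0$, only finitely many $(p,q)$ with $q-p = k$ contribute). By $t$-exactness $\pi_q \nu^* F \simeq \nu^* \pi_q F$; the heart fact then implies the $E_2$-page is concentrated on the $p = 0$ row with $E_2^{0,q} = \pi_q F$, so $\pi_k(\nu_* \nu^* F) \simeq \pi_k F$. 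Non-degeneracy of the $t$-structure on $\SH(X_\et^\hyp)$ then yields the unit is an equivalence.

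For the second statement, assuming Postnikov towers converge in $X_\et^\hyp$: write $F \simeq \lim_n \tau_{\le n} F$ on the étale side (the analogous convergence on $X_\proet^\hyp$ always holds by the preceding corollary). By $t$-exactness, $\nu^* F \simeq \lim_n \nu^* \tau_{\le n} F$. Since $\nu_*$ preserves limits (being a right adjoint), $\nu_* \nu^* F \simeq \lim_n \nu_* \nu^* \tau_{\le n} F \simeq \lim_n \tau_{\le n} F \simeq F$, using the first statement on each bounded-above truncation. The main obstacle is the heart-level fact $R\nu_* \nu^* A \simeq A$: this is exactly where the pro-étale topology is essential, via its basis of weakly contractible objects; the remainder is a standard manipulation of the $t$-structure and Postnikov towers.
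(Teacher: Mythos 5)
There is a genuine gap at the heart-level step, which is the only place where real content enters. You claim that $R^p\nu_*\nu^*A$ vanishes for $p>0$ because it is the sheafification of $U \mapsto \H^p_\proet(U,\nu^*A)$ and this "vanishes when computed on the basis $\scr B$" of weakly contractible objects. But $R^p\nu_*$ is the sheafification of that presheaf \emph{over the étale site} $X_\et$, and the weakly contractible objects of \cite{bhatt2013pro} are pro-étale, not étale; vanishing of sections on a basis of $X_\proet$ says nothing about the étale sheafification. Indeed, your argument never uses that $\nu^*A$ is classical, so it would prove $R^{p}\nu_*G=0$ for $p>0$ for \emph{every} abelian sheaf $G$ on $X_\proet$, which is false: for $X=\operatorname{Spec}(\Q)$ and $G=\lim_n\Z/p^n$ (or $\Zptw$), a nonzero continuous character of the Galois group remains nonzero on every open subgroup, so the class survives pullback along every étale cover and $R^1\nu_*G\neq 0$, even though $\H^*_{\proet}(V,G)$ vanishes for $V$ weakly contractible. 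The true statement you need — that $A\to R\nu_*\nu^*A$ is an equivalence for $A$ an abelian sheaf on $X_\et$, equivalently that étale and pro-étale cohomology of classical sheaves agree — is exactly \cite[Corollary 5.1.6]{bhatt2013pro}, and its proof requires the continuity of classical sheaves (compatibility of étale cohomology of qcqs schemes with cofiltered limits) in addition to the $w$-contractible basis; it is not the formal one-liner you give. This is also precisely the input the paper's proof cites at this point.

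Two further remarks. First, if you repair the heart step by citing \cite[Corollary 5.1.6]{bhatt2013pro}, your route (spectral sequence for bounded-above $F$, then Postnikov convergence for the unbounded case) is genuinely different from the paper's, which instead shows that $\nu_*\nu^*$ preserves filtered colimits on $\SH(X_\et^\hyp)_{\le 0}$ using coherence of the two sites (Lemma \ref{lemm:SH--compact}) and writes a bounded-above object as a filtered colimit of bounded ones. Second, in your version the convergence claim "only finitely many $(p,q)$ with $q-p=k$ contribute" only controls the $E_2$-page; since $F$ is not bounded below, identifying the abutment with $\pi_*(\nu_*\nu^*F)$ (rather than with $\pi_*$ of the colimit of $\nu_*$ applied to the connective covers) needs an argument. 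A clean fix avoiding coherence altogether: $\nu_*$ is left $t$-exact (its left adjoint $\nu^*$ is right $t$-exact), so in the fiber sequence $\nu_*\tau_{\ge -n}\nu^*F \to \nu_*\nu^*F \to \nu_*\tau_{\le -n-1}\nu^*F$ the third term is $(\le -n-1)$-coconnective; hence for fixed $k$ and $n\gg 0$ one gets $\ul\pi_k(\nu_*\nu^*F)\wequi\ul\pi_k(\nu_*\nu^*\tau_{\ge -n}F)\wequi\ul\pi_k F$ by the bounded case, and one concludes by non-degeneracy of the $t$-structure. With these two repairs your argument becomes a correct alternative to the paper's proof.
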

\begin{proof}
Let $U \in X_\et$ be qcqs (e.g. affine). By Lemma \ref{lemm:SH--compact}, the
functors $\map(\Sigma^\infty_+ U, \bullet): \SH(X_\et^\hyp)_{\le 0}, \SH(X_\proet^\hyp)_{\le 0}
\to \SH$ preserve filtered colimits. Consequently the composite $\alpha = \nu_*
\nu^*: \SH(X_\et^\hyp)_{\le 0} \to \SH(X_\et^\hyp)_{\le 0}$ preserves filtered colimits.
For $E \in \SH(X_\et^\hyp)^\heart$ we have $\alpha(E) \wequi E$ by
\cite[Corollary 5.1.6]{bhatt2013pro}, whence the same holds for bounded spectra.
The result about $\SH(X_\et^\hyp)_-$
follows since this category is generated under filtered colimits by bounded
spectra. In general we have $\nu_* \nu^* E \wequi \nu_*(\lim_i (\nu^*
E)_{\le i})$, which is the same as $\lim_i \nu_* \nu^* E_{\le i}$ since $\nu_*$
preserves limits and $\nu^*$ preserves truncation. This is the same as $\lim_i
E_{\le i}$, which is equivalent to $E$ if postnikov towers converge.
\end{proof}

\begin{proposition} \label{prop:proet-characterise}
The essential image of $\nu^*: \SH(X_\et^\hyp)_- \to \SH(X_\proet^\hyp)_-$
consists of those spectra $E \in \SH(X_\proet^\hyp)_-$ with classical homotopy
sheaves, and the functor $\nu_*$ is $t$-exact when restricted to such objects.

If $X$ has étale-locally uniformly bounded étale cohomological dimension\todo{can we get away
with locally finite dimension?}, then the
functor $\nu^*: \SH(X_\et^\hyp) \to \SH(X_\proet^\hyp)$ has
essential image those spectra with classical homotopy sheaves, and $\nu_*$ is
$t$-exact on the entire essential image of $\nu^*$.
\end{proposition}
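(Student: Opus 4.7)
The plan is to reduce to the identity $\nu_* \nu^* G \wequi G$ for abelian sheaves $G \in \Shv(X_\et)^\heart$ from \cite[Corollary 5.1.6]{bhatt2013pro}, already used in Lemma \ref{lemm:proet-ff}, and then bootstrap via Postnikov towers on both sides. Since $\nu^*$ is $t$-exact (as the left adjoint of a geometric morphism, \cite[Remark 1.3.2.8]{lurie-sag}), for any $E \in \SH(X_\et^\hyp)$ the sheaves $\ul\pi_i(\nu^* E) = \nu^*(\ul\pi_i E)$ are classical by definition, which handles the ``only if'' direction in both parts.

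The key intermediate claim is that $\nu_*$ is $t$-exact on the subcategory $\scr D \subset \SH(X_\proet^\hyp)_-$ of bounded-above spectra with classical homotopy sheaves. On $\scr D \cap \SH(X_\proet^\hyp)^\heart$ this is precisely the Bhatt-Scholze identity, phrased as the vanishing of $R^i \nu_*$ on classical sheaves for $i > 0$. One then inducts on Postnikov length using the fiber sequences $\ul\pi_m(F)[m] \to F_{\ge m} \to F_{\ge m+1}$ and the fact that $\nu_*$ (as a right adjoint) preserves fiber sequences, obtaining the claim for all bounded $F \in \scr D$. For $F \in \scr D$ unbounded below with top degree $m$, write $F \wequi \lim_n F_{\ge -n}$; preservation of limits by $\nu_*$ gives $\nu_* F \wequi \lim_n \nu_* F_{\ge -n}$, and since this tower stabilizes in each degree (by the bounded case), the Milnor sequence collapses to give $\ul\pi_i \nu_* F \wequi \nu_* \ul\pi_i F$ for all $i$.

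Part (1) then follows immediately: for $F \in \scr D$, the $t$-exactness of both $\nu^*$ and $\nu_*$, together with $\nu_* \nu^* \wequi \id$ on the heart, show that the counit $\nu^* \nu_* F \to F$ is an equivalence on all homotopy sheaves, hence an equivalence by hypercompleteness of $\SH(X_\proet^\hyp)$. For part (2), the uniform dimension bound ensures via Lemma \ref{lemm:uniformly-bounded-compact-generation} that Postnikov towers converge in $\SH(X_\et^\hyp)$ (they always do in $\SH(X_\proet^\hyp)$). Given $F \in \SH(X_\proet^\hyp)$ with classical homotopy sheaves, each truncation $F_{\le n}$ lies in $\scr D$; setting $G_n := \nu_* F_{\le n}$, part (1) gives $\nu^* G_n \wequi F_{\le n}$, and the $t$-exactness of $\nu_*$ on $\scr D$ forces $(G_n)_{\le n-1} \wequi G_{n-1}$, so $G := \lim_n G_n$ satisfies $G_{\le n} \wequi G_n$ by Postnikov convergence on the étale side. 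Then $(\nu^* G)_{\le n} \wequi \nu^* G_{\le n} \wequi F_{\le n}$ by $t$-exactness of $\nu^*$, and Postnikov convergence on the pro-étale side yields $\nu^* G \wequi F$; the $t$-exactness of $\nu_*$ on the full essential image follows from $\nu_* \nu^* \wequi \id$ combined with the bounded case. The only non-formal ingredient is the Bhatt-Scholze input on the heart; everything else is Postnikov-tower bookkeeping, with the uniform dimension bound used precisely to make the étale Postnikov tower converge and to remain compatible, via $t$-exactness of $\nu^*$, with the pro-étale one.
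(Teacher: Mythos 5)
The heart-level input, the Postnikov induction for bounded objects, and the overall architecture are fine, but the step that carries your part (1) from bounded to merely bounded-above spectra is broken. You write $F \wequi \lim_n F_{\ge -n}$ and then invoke preservation of limits by $\nu_*$ together with a Milnor sequence. However, the connective covers form a filtered \emph{direct} system $F_{\ge 0} \to F_{\ge -1} \to F_{\ge -2} \to \cdots$ whose \emph{colimit} is $F$; they do not form a tower, and $F$ is not their limit (the genuine inverse system with limit $F$, the Postnikov tower $\{F_{\le n}\}_n$, is eventually constant for bounded-above $F$ and so gives no reduction to bounded objects). Hence the formal fact that the right adjoint $\nu_*$ preserves limits has nothing to act on; what is actually needed is that $\nu_*$ commutes with the filtered colimit $F \wequi \colim_n F_{\ge -n}$, which a right adjoint has no a priori reason to do. This is exactly the non-formal ingredient in the paper's proof: $\nu_*$ preserves filtered colimits of uniformly bounded-above objects because the étale and pro-étale sites are coherent (Lemma \ref{lemm:SH--compact}, used in the proof of Lemma \ref{lemm:proet-ff} and re-used verbatim in the proposition). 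Your proof never invokes this or any substitute, so part (1) is not established for unbounded-below spectra; and since your part (2) feeds the truncations $F_{\le n}$ --- which are unbounded below whenever $F$ is --- into part (1), the gap propagates through the entire argument.

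The step is repairable: either cite the coherence/filtered-colimit argument just mentioned, or argue formally as follows: $\nu_*$ is left $t$-exact (right adjoint of the $t$-exact $\nu^*$), so applying $\nu_*$ to the fiber sequence $F_{\ge m} \to F \to F_{\le m-1}$ and using that $\nu_* F_{\le m-1}$ is $(m-1)$-truncated gives $\ul{\pi}_i \nu_* F \wequi \ul{\pi}_i \nu_* F_{\ge m} \wequi \nu_*\ul{\pi}_i F$ for $i \ge m$, with $m$ arbitrary; then the counit is an isomorphism on homotopy sheaves and hypercompleteness finishes, as in your write-up. Two further points. First, the truncation triangle should read $F_{\ge m+1} \to F_{\ge m} \to \ul{\pi}_m(F)[m]$ (cosmetic). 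Second, in part (2) your phrase ``Postnikov convergence on the étale side'' must be used in the strong sense of left completeness of the $t$-structure on $\SH(X_\et^\hyp)$ (available under the hypothesis via \cite[Corollary 1.3.3.11]{lurie-sag}, cf.\ Section \ref{subsec:sheaves-of-spectra} and Lemma \ref{lemm:uniformly-bounded-compact-generation}): knowing only that each object is the limit of its own Postnikov tower does not give $G_{\le n} \wequi G_n$ for the limit $G$ of your abstract tower. Granting these repairs, your part (2) --- gluing the $\nu_* F_{\le n}$ by left completeness --- is a genuinely different and viable alternative to the paper's route, which instead proves $t$-exactness of $\nu_*$ on all spectra with classical homotopy sheaves by evaluating $\ul{\pi}_0(\nu_* E)$ at geometric points of uniformly bounded cohomological dimension and then concludes by hypercompleteness.
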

\begin{proof}
Clearly spectra in the essential image of $\nu^*$ have classical homotopy sheaves; we need
to prove the converse.

The functor $\nu^*: \SH(X_\et^\hyp)^\heart \to \SH(X_\proet^\hyp)^\heart$ is an
equivalence onto the subcategory of classical sheaves, with inverse given by
$\nu_*$ (by Lemma \ref{lemm:proet-ff}). The result for spectra
with bounded homotopy sheaves follows immediately.
As in the proof of Lemma \ref{lemm:proet-ff}, the functor $\nu_*$ preserves
filtered colimits in $\SH(X_\proet^\hyp)_{\le 0}$, and hence so does $\nu^*
\nu_*$. Since the subcategory of $\SH(X_\proet^\hyp)_{\le 0}$ consisting of
spectra with classical homotopy sheaves is generated under filtered
colimits by bounded spectra with classical homotopy sheaves,
we find that $\nu^*\nu_* \Rightarrow \id$ is an
equivalence on this subcategory. This first statement follows.

Now let $X$ be étale-locally of uniformly bounded étale cohomological dimension.
Postnikov towers converge in $X_\et^\hyp$ (see Lemma
\ref{lemm:uniformly-bounded-compact-generation}). Hence $\nu^*: \SH(X_\et^\hyp) \to
\SH(X_\proet^\hyp)$ is fully faithful by Lemma \ref{lemm:proet-ff}. Let $U
\in X_\et$ have cohomological dimension $< n$ and let $E \in
\SH(X_\proet^\hyp)_{> n}$ have classical homotopy sheaves. I claim that $[U,
\nu_* E] \wequi 0$. To see this, we note that $\Map(U, \nu_* E) \wequi \lim_i \Map(U,
\nu_* (E_{\le i}))$ (since postnikov towers converge in $\SH(X_\proet^\hyp)$)
and hence by the Milnor exact sequence it
suffices to show that $[U[\epsilon], \nu_* (E_{\le i})] = 0$ for all $i$ and $\epsilon
\in \{0,1\}$. But $\nu_* (E_{\le i}) \in \SH(X_\et^\hyp)_{> n}$ by the
$t$-exactness result above, whence the claim follows from Lemma
\ref{lemm:SH-compact-objects}(2). We deduce that if $E \in
\SH(X_\proet^\hyp)$ has classical homotopy sheaves,
then $[U, \nu_* E] \wequi [U, \nu_*(E_{\le n})]$.

Let
$\bar x$ be a geometric point of $X$. Then $X_{\bar x} = \lim_\alpha X_\alpha$,
where $\{X_\alpha\}$ is a cofiltered system of affine étale $X$-schemes of
bounded cohomological dimension (here we use that $X$ is étale-locally
uniformly of bounded
étale cohomological dimension), say bounded by $n \ge 0$. We deduce that if $E
\in \SH(X_\proet^\hyp)$ has classical homotopy sheaves, then
\begin{gather*}
  \ul{\pi}_0(\nu_* E)(X_{\bar x}) = \colim_\alpha [X_\alpha, \nu_* E]
           \wequi \colim_\alpha [X_\alpha, \nu_* (E_{\le n})]
           = \ul{\pi}_0(\nu_*(E_{\le n}))(X_{\bar x}) \\
           \wequi \nu_*(\ul{\pi}_0(E_{\le n}))(X_{\bar x})
           \wequi \nu_*(\ul{\pi}_0(E))(X_{\bar x}),
\end{gather*}
using the $t$-exactness statement for bounded above spectra (which we already
proved) again. We have thus shown that $\nu_*$ is
$t$-exact on arbitrary spectra with classical homotopy sheaves. Since $\nu^*$ is
$t$-exact (as always) and $X_\proet^\hyp$ is hypercomplete (by definition),
this shows that $\nu^* \nu_* \Rightarrow \id$ is an equivalence on spectra with
classical homotopy sheaves, which concludes the proof.
\end{proof}

After these preparatory remarks, we come to our application of the pro-étale
topology: the construction of the twisting spectrum.
Given a sheaf of groups $F$ on $X_\proet$, we denote by $K(F, 1) \in
\Shv(X_\proet^\hyp)$ the sheaf obtained from the presheaf $U \mapsto K(F(U),
1)$, where $K(F(U), 1)$ is the Eilenberg-MacLane space of the group $F(U)$.

\begin{theorem} \label{thm:Sptw-small}
Let $X$ be a scheme and $1/p \in X$. Let $\Zptw := \lim_n \mu_{p^n} \in
\Shv(X_\proet^\hyp)_{\le 0}$ and $\Sptw[1] = (\Sigma^\infty K(\Zptw, 1))_p^\comp
\in \SH(X_\proet^\hyp)_p^\comp$.
\begin{enumerate}
\item $\Zptw$ is a representable by a weakly étale $X$-scheme, and so stable under base change.
\item $\Sptw$ is stable under base change, and an invertible object of
  $\SH(X_\proet^\hyp)_p^\comp$.
\item If $X$ has all $p^n$-th roots of unity for all $n$, then $\Sptw \wequi \1
  \in \SH(X_\proet^\hyp)_p^\comp$.
\item $\Sptw$ lies in the essential image of $\nu^*: \SH(X_\et)_p^\comp \to
  \SH(X_\proet)_p^\comp$. The same holds for its $\otimes$-inverse.
\end{enumerate}
\end{theorem}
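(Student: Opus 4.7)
My plan is to dispatch the four parts in the order (1), (3), (2), (4), since (3) furnishes the local model that the descent arguments in (2) and (4) exploit.

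For (1), each $\mu_{p^n}$ is a finite étale $X$-scheme because $1/p \in X$, and the cofiltered limit $\lim_n \mu_{p^n}$ along the norm (i.e. $p$-th power) maps is an affine weakly étale $X$-scheme, hence lies in the pro-étale site $X_\proet$; this scheme represents $\Zptw$ in $\Shv(X_\proet^\hyp)_{\le 0}$. Base change stability is immediate because both finite étale maps and cofiltered limits of affine schemes commute with pullback.

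For (3), when $X$ has all $p^n$-th roots of unity each étale sheaf $\mu_{p^n}$ becomes the constant sheaf $\underline{\Z/p^n}$, whence $\Zptw \wequi \underline{\Z_p}$. The key identity $\Sigma^\infty K(\underline{\Z/p^n}, 1) \wequi \1[1]/p^n$ then follows from the CW presentation $B\Z/p^n \wequi S^1 \cup_{p^n} e^2$ applied sectionwise to the constant sheaf. Passing to the inverse limit in $n$ yields
\[ (\Sigma^\infty K(\Zptw, 1))_p^\comp \wequi \lim_n \1[1]/p^n \wequi \1[1]_p^\comp. \]
The subtle point is the interchange of $\Sigma^\infty$ with the cofiltered limit; I would justify it using coherence of the $\mu_{p^n}$ in $X_\proet$ together with the left-completeness of the $t$-structure on $\SH(X_\proet^\hyp)$ established in the preceding corollary.

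For (2), base change stability of $\Sptw$ follows from that of $\Zptw$ via the compatibility of $K(-,1)$, $\Sigma^\infty$, and $p$-completion with pullback (all are left adjoints). Invertibility can be checked after a pro-étale cover, so I would pull back along $X' := \lim_n X[\mu_{p^n}] \to X$; since $X'$ has all $p^n$-th roots of unity, part (3) gives $\Sptw|_{X'} \wequi \1$, which is evidently invertible.

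For (4), I would invoke Proposition~\ref{prop:proet-characterise}, which reduces the question to verifying classicality of the homotopy sheaves of $\Sptw$. The natural candidate étale preimage is the $p$-completed inverse limit of the étale spectra $\Sigma^\infty K(\mu_{p^n}, 1)[-1] \in \SH(X_\et^\hyp)$ (each $\mu_{p^n}$ being a bona fide étale sheaf), and one needs to verify that $\nu^*$ of this limit recovers $\Sptw$. The statement for the $\otimes$-inverse is then automatic, since $\nu^*$ is symmetric monoidal and preserves invertible objects. I expect this last step to be the main obstacle: as the footnote emphasizes, $\Zptw$ itself is \emph{not} in general classical, so the classicality of $\underline{\pi}_*\Sptw$ must emerge from the $p$-completion rather than holding level-wise, and one has to carefully control the interaction of $\nu^*$ and $\nu_*$ with the inverse limit in $n$.
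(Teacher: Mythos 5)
Parts (1) and (2) of your proposal are fine and essentially the paper's argument (representability of $\Zptw$ by an honest pro-étale $X$-scheme, and pro-étale locality of invertibility, reducing along the cover adjoining all $p$-power roots of unity to case (3)). The serious problem is your (3), on which (2) and (4) then rest. The identity $\Sigma^\infty K(\ul{\Z/p^n},1) \wequi \1[1]/p^n$ is false: $S^1 \cup_{p^n} e^2$ is only the $2$-skeleton of $B\Z/p^n$, which has cells in all dimensions, and $\Sigma^\infty K(\Z/p^n,1)$ has nonzero mod $p$ homology in all positive degrees, so it is not the Moore spectrum. Moreover the interchange of $\Sigma^\infty$ with the cofiltered limit is exactly the step that cannot be made to work: the remark following the construction in the paper records that $\lim_n \Sigma^\infty K(\mu_{p^n},1) \not\wequi \1[1]_p^\comp$ already over a separably closed field. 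Finally, $\Zptw$ is not the constant sheaf $\ul{\Z_p}$; it is the limit sheaf, whose sections on a $w$-contractible $Y$ are the \emph{continuous} maps $|Y| \to \Z_p$. The paper's proof of (3) takes a genuinely different route which avoids commuting any limit past $K(-,1)$ or $\Sigma^\infty$: the element $1 \in \Zptw$ gives a map $S^1 \wequi K(\ul\Z,1) \to K(\Zptw,1)$, and one shows this is a stable $p$-equivalence by evaluating on $w$-contractible objects (where $K(F,1)(Y) \wequi K(F(Y),1)$ by cohomological dimension zero) and invoking Bousfield--Kan: sectionwise $M(|Y|,\Z) \to M(|Y|,\Z_p)$ is a derived $p$-completion of abelian groups, so the induced map of nilpotent spaces is a mod $p$ equivalence, hence a stable $p$-equivalence, and this persists after sheafification.

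Your (4) inherits the same error: the proposed preimage $(\lim_n \Sigma^\infty K(\mu_{p^n},1)[-1])_p^\comp$ is precisely the failed construction from the paper's remark, and $\nu^*$, being a left adjoint, will not commute with that inverse limit in any case. Two further points: Proposition \ref{prop:proet-characterise} requires $X$ to be étale-locally of uniformly bounded étale cohomological dimension, which the theorem does not assume, so one must first reduce to $X = Spec(\Z[1/p])$ using base-change stability of $\Sptw$ and compatibility of $\nu^*$ with pullback; and after $p$-completion the correct criterion, via Lemma \ref{lemm:p-completion-classical-subcat}, is that $\Sptw/p$ and $D(\Sptw)/p$ have classical homotopy sheaves — not $\Sptw$ itself. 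That mod $p$ classicality is then checked pro-étale locally (\cite[Lemma 5.1.4]{bhatt2013pro}), reducing to the case of all roots of unity where $\Sptw/p \wequi \1/p$; no explicit étale preimage is ever exhibited.
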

\begin{proof}
(1) Attaching $p$-th roots of unity is an étale extension away from $p$
\cite[Corollary 10.4]{neukirch2013algebraic}.
Consequently $\mu_p$ is an étale $X$-scheme, and $\Zptw$ is represented by the
same limit, taken in the category of schemes. Finally pullback of schemes is a limit, so
commutes with limits, so the scheme $\Zptw$ is stable under pullback.

(2) Stability is clear. For invertiblity, note first that if $f: Y \to X$ is weakly
étale, then $f^*$ has a left adjoint $f_\#$ which satisfies a projection
formula\NB{All functors under consideration preserve colimits, so reduce to the
case of schemes, where this is clear.}. It follows that $f^* \imap(A, B) \wequi \imap(f^* A, f^* B)$\NB{$[T,
f^* \imap(A, B)] = [f_\# T, \imap(A,B)] = [f_\# T \otimes A, B] = [f_\#(T
\otimes f^* A), B] = [T \otimes f^*A, f^*B] = [T, \imap(f^*A, f^*B)]$}. This
implies that being invertible is pro-étale local on $X$\NB{$T$ is invertible if
and only if the canonical map $\imap(T, \1) \otimes T \to \1$ is an equivalence}.
Let $X'$ be obtained by attaching all $p^n$-th roots of unity to $X$, for all
$n$. By construction $X' \to X$ is pro-(finite étale), and in fact a covering
map \cite[Tag 090N]{stacks-project}. We may thus replace $X$ by $X'$ and so
assume that $X$ has all $p^n$-th roots of unity for all $n$.
It thus suffices to show (3).

(3) In this situation $\mu_{p^n} \wequi \Z/p^n$
and $\Zptw \wequi \hat\Z_p$ (defined to be $\lim \Z/p^n$ taken in $\Shv(X_\proet^\hyp)$).
Note that $1 \in \hat\Z_p$ defines a map $S^1 \to K(\hat\Z_p, 1)$ which we shall show
is a stable $p$-equivalence.  
As a preparatory remark, let $F \in Ab(X_\proet)$ be any sheaf of abelian
groups. Then $K(F, 1) \in \Shv(X_\proet^\hyp)$ is a sheaf of spaces with
$\pi_0(K(F,1)(Y)) = \H^1_\proet(Y, F)$, $\pi_1(K(F,1)(Y)) = F(Y)$ and
$\pi_i(K(F,1)(Y)) = 0$ else, for all $Y \in X_\proet$. In particular, if $Y$ is
$w$-contractible, then $K(F,1)(Y) \wequi K(F(Y), 1)$
(by Lemma \ref{lemm:weakly-contractible-dim}). Let $f: F \to G \in
Ab(X_\proet)$ and assume that for each $w$-contractible $Y$, the map $F(Y) \to
G(Y)$ is a derived $p$-completion (i.e. $\Hom(\Z/p^\infty, F(Y)) = 0 =
\Hom(\Z/p^\infty, G(Y))$ and $G(Y) \wequi Ext(\Z/p^\infty, F(Y))$).
Then $K(F, 1)(Y) \to K(G, 1)(Y)$ is a
$p$-equivalence in the classical sense \cite[VI.2.2]{bousfield1987homotopy}, and consequently
$\Sigma^\infty (K(F, 1)(Y)) \to \Sigma^\infty (K(G, 1)(Y))$ is a $p$-equivalence
of spectra as follows from \cite[Proposition
5.3(i)]{bousfield1987homotopy}\NB{if $f: X \to Y$ is a p-equivalence of
nilpotent spaces,
i.e. $\Sigma^\infty f \wedge H\Z/p \wequi (\Sigma^\infty f/p) \wedge H\Z$ is an
equivalence, then also $\Sigma^\infty f/p$ is an equivalence}.
This implies that $\Sigma^\infty(f) \in \SH(X_\proet^\hyp)$ is a
$p$-equivalence: it suffices to show that $cof(\Sigma^\infty(f))/p$ has
vanishing homotopy sheaves, which follows from our assumption about
$w$-contractible $Y$ and the fact that $\Sigma^\infty = L_\proet
\Sigma^\infty_{pre}: \Shv(X_\proet^\hyp)_* \to \SH(X_\proet^\hyp)$,
using Lemma \ref{lemm:w-presheaf-zero} below. Here
$\Sigma^\infty_{pre}: \PSh(X_\proet)_* \to \SH(\PSh(X_\proet))$ is the presheaf
level suspension spectrum functor and $L_\proet: \SH(\PSh(X_\proet)) \to
\SH(X_\proet^\hyp)$ is the pro-étale hypersheafification functor.

To apply this remark to our case, note that $S^1 \wequi K(\ul\Z, 1) \in
\Shv(X_\proet^\hyp)_*$, where $\ul \Z \in \Shv(X_\proet^\hyp)_{\le 0}$
denotes the constant sheaf. We have $\H^0_\proet(Y, \Z) = M(|Y|, \Z)$, where $M$
denotes the set of continuous maps between two topological spaces (we view $\Z$
as discrete); see Lemma \ref{lemm:constant-sheaves} below.
Also $\H^0_\proet(Y, \hat\Z_p) = \lim_n \H^0_\proet(Y, \Z/p^n) = \lim_n
M(|Y|, \Z/p^n)$, by the same Lemma (or \cite[Lemma 4.2.12]{bhatt2013pro}).
We thus need to prove that $M(|Y|, \Z) \to \lim_n
M(|Y|, \Z/p^n)$ is a derived $p$-completion of abelian groups. Since the source
has no $p$-torsion, for this it is enough to show that $M(|Y|, \Z) \to M(|Y|,
\Z/p^n)$ is surjective for every $n$ (clearly the kernel is $p^n M(|Y|, \Z)$)
\cite[Section VI.2.1, bottom of page 166]{bousfield1987homotopy}.
But this is clear: if $s: \Z/p^n \to \Z$ is any set-theoretic section of $\Z \to
\Z/p^n$ then composition with $s$ induces a section of $M(|Y|, \Z) \to M(|Y|,
\Z/p^n)$.

(4) It suffices to treat the case $X = Spec(\Z[1/p])$.
By Example \ref{ex:uniformly-bounded-dim}, $X$ has étale-locally uniformly
bounded étale cohomological dimension. Hence by Lemma
\ref{lemm:uniformly-bounded-compact-generation}, Lemma \ref{lemm:proet-ff} and
Proposition \ref{prop:proet-characterise},
$\nu^*: \SH(X_\et^\hyp) \to \SH(X_\proet^\hyp)$ is fully faithful with essential
image the spectra with classical homotopy sheaves. Lemma
\ref{lemm:p-completion-classical-subcat} now implies that it is sufficient (and
necessary) to
show that $\Sptw/p$ and its dual $D(\Sptw)/p$ have classical homotopy sheaves.
By \cite[Lemma 5.1.4]{bhatt2013pro} this is pro-étale local on $X$ (note that
taking strong duals commutes with base change), so we may assume that
$\Sptw$ is $p$-equivalent to $\1$, in which case the claim is clear.
\end{proof}

\begin{lemma} \label{lemm:w-presheaf-zero}
Let $\scr C$ be a site and $F$ a presheaf of pointed sets on $\scr C$.
Suppose that for every $X \in \scr C$ there exists a covering
$\{Y_i \to X\}_i$ with $F(Y_i)=*$ for
all $i$. Then $aF = *$.
\end{lemma}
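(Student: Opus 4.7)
The plan is to verify $aF = *$ by checking its universal property rather than by directly computing the plus construction. Concretely, for any sheaf of pointed sets $G$ on $\scr C$, a map $aF \to G$ is the same as a presheaf map $F \to G$, so it suffices to show that for every sheaf $G$ of pointed sets, the only presheaf morphism $F \to G$ is the constant map at the base point.

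So suppose $\phi: F \to G$ is such a morphism and fix $X \in \scr C$ and $s \in F(X)$; I want to show $\phi_X(s) = *$. By hypothesis choose a covering $\{Y_i \to X\}_i$ with $F(Y_i) = *$. Then $s|_{Y_i} = *$ in $F(Y_i)$ since $F(Y_i)$ is a singleton, and by naturality $\phi_X(s)|_{Y_i} = \phi_{Y_i}(s|_{Y_i}) = *$ for each $i$. Since $\phi_X(s)$ and the basepoint of $G(X)$ both restrict to the same element on each member of the cover $\{Y_i \to X\}$, the sheaf condition for $G$ (separatedness is enough) forces $\phi_X(s) = *$.

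This shows $\Hom_{\Shv}(aF, G) = *$ for every sheaf $G$. Taking $G = aF$ itself, the identity on $aF$ must equal the constant map at $*$, which forces $aF = *$ as required. The argument is essentially tautological and there is no real obstacle; the only mild subtlety is remembering to phrase things in terms of the universal property rather than attempting to chase a plus-construction, since the hypothesis is tailored precisely to supplying the required refinement in a single step.
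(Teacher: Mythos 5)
Your proof is correct and follows essentially the same route as the paper: both arguments reduce to the observation that any pointed map from $F$ to a (separated) sheaf must be constant at the basepoint, and then use the adjunction between sheafification and inclusion to conclude $aF = *$. Your instantiation at $G = aF$ is just a concrete unwinding of the paper's appeal to the Yoneda lemma.
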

\begin{proof}
Any pointed map $F \to G$ with $G$ a pointed sheaf must be zero. The result
follows from the Yoneda lemma since $a$ is left adjoint to the inclusion of
sheaves into presheaves.
\end{proof}

\begin{lemma}\todo{surely this must be citable??} \label{lemm:constant-sheaves}
Let $S$ be a set. Define presheaves $F_1, F_2$ on the category of schemes, via $F_1(X) = S$
and $F_2(X) = M(|X|, S)$, the set of continuous maps from the underlying
topological space of $X$ to $S$ (viewed as a discrete topological space). Then
\begin{enumerate}
\item The canonical map $F_1 \to F_2$ is a Zariski equivalence.
\item $F_2$ is a sheaf in the fpqc topology.
\end{enumerate}
\end{lemma}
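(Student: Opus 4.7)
For (1), the plan is to verify directly that $F_1 \to F_2$ is a local equivalence for the Zariski topology. Given any scheme $X$ and $f \in F_2(X) = M(|X|,S)$, since $S$ is discrete each preimage $U_s := f^{-1}(\{s\})$ is open in $|X|$, and the family $\{U_s \hookrightarrow X\}_{s \in S}$ is a Zariski cover on which $f|_{U_s}$ is the constant function $s$, which lifts to $s \in F_1(U_s)$. For local injectivity, if $a, b \in S = F_1(X)$ have equal image in $F_2(X)$ then the constant functions $a, b$ agree pointwise on $|X|$, so either $X = \emptyset$ (and the empty sieve covers) or $a = b$. Thus $F_1 \to F_2$ induces an isomorphism after Zariski sheafification, which is the statement of (1) once $F_2$ is known to be a Zariski sheaf (a special case of (2)).

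For (2), I would reduce to showing that whenever $f: Y \to X$ is faithfully flat and quasi-compact, the diagram $F_2(X) \to F_2(Y) \rightrightarrows F_2(Y \times_X Y)$ is an equalizer; applying this to $Y := \coprod_i U_i$ for an arbitrary fpqc covering $\{U_i \to X\}_i$ then yields the full sheaf condition. The argument uses two standard topological inputs: (a) the underlying map $|f|: |Y| \to |X|$ is a topological quotient map (submersiveness of fpqc morphisms), and (b) the canonical comparison $|Y \times_X Y| \twoheadrightarrow |Y| \times_{|X|} |Y|$ is surjective. By (a), continuous maps $|X| \to S$ correspond to continuous maps $|Y| \to S$ that coequalize the two set-theoretic projections from $|Y| \times_{|X|} |Y|$; by (b), such maps are precisely the ones equalized by pullback along $Y \times_X Y \rightrightarrows Y$. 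This is the desired equalizer description.

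The main obstacle is merely citing (a) and (b) cleanly; both are standard facts about schemes (see e.g.\ the relevant tags in the Stacks Project) but neither is entirely trivial. Once these are in hand, everything else is formal manipulation of continuous maps to the discrete set $S$, so no substantive difficulty remains.
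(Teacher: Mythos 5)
Your part (1) is exactly the paper's argument (cover $X$ by the open sets $f^{-1}(\{s\})$, $s\in S$), and the topological core of your part (2) — descend along a quotient map $|Y|\to|X|$ and use surjectivity of $|Y\times_XY|\to|Y|\times_{|X|}|Y|$ to pass between scheme-theoretic and topological fibre products — is also precisely what the paper does. The problem is your reduction step. You propose to prove the equalizer condition only for faithfully flat \emph{quasi-compact} morphisms $Y\to X$ and then apply it to $Y=\coprod_i U_i$ for an arbitrary fpqc covering $\{U_i\to X\}_i$. But the total morphism of an fpqc covering is in general not quasi-compact: already an infinite Zariski cover of an affine scheme gives a $\coprod_i U_i\to X$ that is faithfully flat but not quasi-compact, and the definition of an fpqc covering only demands flatness of each $U_i\to X$ plus the refinement property over affine opens. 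This matters because your input (a) genuinely needs quasi-compactness: a faithfully flat morphism without such a hypothesis need not be submersive (e.g.\ $\coprod_{x\in X}\operatorname{Spec}\mathcal{O}_{X,x}\to X$ is faithfully flat, yet a set stable under generization need not be open). So the step ``applying this to $Y:=\coprod_i U_i$ yields the full sheaf condition'' fails as stated.

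The standard (and easy) repair is the one the paper uses: first observe that $F_2$ is a Zariski sheaf directly, since being a locally constant function is a Zariski-local condition, and then invoke the criterion that a Zariski sheaf satisfying descent for faithfully flat morphisms of \emph{affine} schemes is an fpqc sheaf (Stacks Project, Tag 03O1); for affines quasi-compactness is automatic and your topological argument applies verbatim (submersiveness is Tag 0256). Alternatively you could keep your single-morphism formulation but then you must prove separately that the total map $\coprod_i U_i\to X$ of an fpqc covering is submersive, which requires working affine-locally on $X$ and using the finitely many quasi-compact opens $W_i\subset U_i$ covering a given affine — i.e.\ essentially redoing the reduction above. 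Also note, though this is minor, that identifying the sheaf condition for the family with the equalizer condition for $\coprod_i U_i\to X$ uses that $F_2$ takes arbitrary disjoint unions of schemes to products, which holds here but should be said.
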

\begin{proof}
The map $F_1 \to F_2$ is clearly injective. We show that it induces a surjection
on the sheafification, whence (1). To do so, given $f \in F_2(X)$ we have to find a Zariski
cover $\{U_\alpha\}_\alpha$ of $X$ and elements $f_\alpha \in F_1(X)$ with
$f_\alpha$ mapping to $f|_{U_\alpha}$. The image of $F_1 \to F_2$ consists of
the constant functions; hence the cover $X = \coprod_{s \in S} f^{-1}(\{s\})$
works.

Note that $M(|X|, S)$ is the set of locally constant functions from $|X|$ to
$S$. This condition is clearly Zariski local, so $F_2$ is a Zariski sheaf. To
prove that $F_2$ is an fpqc sheaf, it thus suffices to prove that $F_2$ has
descent for faithfully flat morphisms $\alpha: X \to Y$ of affine schemes
\cite[Tag 03O1]{stacks-project}. The canonical map $|X \times_Y X| \to |X| \times_{|Y|}
|X|$ is surjective, as is $X \to Y$;
this implies that an arbitrary function $f: |X| \to S$
descends to $Y$ if and only if the two pullbacks to $|X \times_Y X|$ agree, and
uniquely so; in other
words the presheaf of arbitrary (not necessarily continuous) functions into $S$
is a sheaf. Finally for $U \subset |Y|$, we have that $U$ is open if
and only if $\alpha^{-1}(U)$ is open \cite[Tag 0256]{stacks-project}; this implies that
$f$ is continuous if and only if $f \circ \alpha$ is. This concludes the proof.
\end{proof}

Theorem \ref{thm:Sptw-small} applies in particular if $X = Spec(\Z[1/p])$
in which case $\nu^*$ is fully faithful (by Example
\ref{ex:uniformly-bounded-dim}, Lemma
\ref{lemm:uniformly-bounded-compact-generation} and Lemma \ref{lemm:proet-ff}).
\begin{definition}
We put $\Sptw = \nu_* \Sptw \in \SH(Spec(\Z[1/p])_\et^\hyp)_p^\comp$. For
general schemes $X$ with $1/p \in X$ there is a unique morphism
$f: X \to Spec(\Z[1/p])$ and we define $\Sptw = f^* \Sptw \in \SH(X_\et)_p^\comp$.
\end{definition}

It follows that $\Sptw \in \SH(X_\et)_p^\comp$ is stable under base change,
invertible, and $\nu^* \Sptw = \Sptw$. We offer the following further plausibility
check.

\begin{lemma} \label{lemm:plausibility}
We have $\Sptw \wedge H\Z/p^n \wequi \mu_{p^n} \in \SH(X_\et^\hyp)^\heart$.
\end{lemma}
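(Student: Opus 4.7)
The plan is to transfer the question to the pro-étale site via $\nu^*$, where the twisting spectrum has an explicit description, then construct a natural map via the Hurewicz map and check it is an equivalence on a pro-étale cover trivializing $\Zptw$.

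First I would reduce to proving the analogous statement in $\SH(X_\proet^\hyp)_p^\comp$. The functor $\nu^*$ is symmetric monoidal and fully faithful on bounded-above objects (Lemma \ref{lemm:proet-ff}); by Theorem \ref{thm:Sptw-small}(4) it sends $\Sptw$ to its pro-étale incarnation, while it preserves $H\Z/p^n$ (essentially a constant sheaf) and $\mu_{p^n}$ (representable, hence classical by Theorem \ref{thm:Sptw-small}(1)). So it suffices to exhibit an equivalence $\Sptw \wedge H\Z/p^n \wequi H\mu_{p^n}$ in the heart of $\SH(X_\proet^\hyp)_p^\comp$.

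Next I would construct a natural map. The Hurewicz map $\Sigma^\infty K(\Zptw,1) \to H\Zptw[1]$, smashed with $H\Z/p^n$, factors through
\[ \Sigma^\infty K(\Zptw,1) \wedge H\Z/p^n \;\to\; H\Zptw[1] \wedge H\Z/p^n \wequi H(\Zptw \otimes^L \Z/p^n)[1]. \]
Since pro-étale locally $\Zptw \wequi \hat\Z_p$ is torsion-free, there is no Tor term, and the Kummer sequence gives $\Zptw/p^n \wequi \mu_{p^n}$. Using that smashing with $H\Z/p^n$ is unaffected by $p$-completion and desuspending, this yields the desired natural map $\alpha: \Sptw \wedge H\Z/p^n \to H\mu_{p^n}$.

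Finally I would check that $\alpha$ is an equivalence by passing to a pro-étale cover $X'$ of $X$ on which all $p$-power roots of unity are present, so that $\Zptw|_{X'} \wequi \hat\Z_p$. By Theorem \ref{thm:Sptw-small}(3), $\Sptw|_{X'} \wequi \1$, so $(\Sptw \wedge H\Z/p^n)|_{X'} \wequi H\Z/p^n$ sits in the heart and is identified with $H\mu_{p^n}|_{X'}$ via the chosen isomorphism $\hat\Z_p/p^n \wequi \mu_{p^n}|_{X'}$. The trivialization of $\Sptw$ comes from the map $S^1 \to K(\hat\Z_p,1)$ of Theorem \ref{thm:Sptw-small}(3), and unravelling the construction of $\alpha$ one sees that it becomes precisely this canonical identification. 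Hypercompleteness of $\SH(X_\proet^\hyp)$ then promotes local equivalence to a global one, and both concentration in degree $0$ and the equivalence with $\mu_{p^n}$ follow. The main bookkeeping obstacle is ensuring the compatibility between the Hurewicz map, the vanishing-Tor identification $\Zptw \otimes^L \Z/p^n \wequi \mu_{p^n}$, and the local trivialization of $\Sptw$; once that is arranged, the argument is essentially formal.
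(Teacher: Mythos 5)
Your approach is correct in outline but takes a genuinely different route from the paper. The paper models $K(\Zptw,1)$ by the bar construction and computes $\ul{\pi}_0(\Sptw \wedge H\Z/p^n)$ from group homology ($\tilde H_1(A,\Z/p^n)=A/p^nA$ gives $\Zptw/p^n\wequi\mu_{p^n}$), while the vanishing of the homotopy sheaves in degrees $\ne 0$ is checked separately on stalks of the étale topos, where $\Sptw\wequi\1_p^\comp$. You instead build a single comparison map to $H\mu_{p^n}[1]$, verify it is an equivalence after pullback to a pro-étale cover trivializing $\Zptw$ (using the trivialization $S^1\to K(\hat\Z_p,1)$ of Theorem \ref{thm:Sptw-small}(3), which becomes an equivalence after $\wedge\, H\Z/p^n$ because it is a stable $p$-equivalence), and then descend by hypercompleteness; this gives concentration in degree $0$ and the identification of $\pi_0$ in one stroke. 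The compatibility you defer does work out: the composite $\1[1]\wedge H\Z/p^n\to\Sigma^\infty K(\hat\Z_p,1)\wedge H\Z/p^n\to H(\hat\Z_p/p^n)[1]$ is induced by $\Z/p^n\to\hat\Z_p/p^n$, an isomorphism of pro-étale sheaves (the paper checks the exactness of $0\to\hat\Z_p\xrightarrow{p^n}\hat\Z_p\to\Z/p^n\to 0$ on $w$-contractible objects), so $\alpha$ is an equivalence over the cover by two-out-of-three.

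Two corrections are needed. First, $H\Zptw[1]\wedge H\Z/p^n$ is \emph{not} equivalent to $H(\Zptw\otimes^L\Z/p^n)[1]$: the smash is over the sphere, not over $H\Z$, and already $\pi_*(H\Z\wedge H\Z/p)$ is nonzero in infinitely many degrees (dual Steenrod algebra phenomena), so the absence of Tor terms is beside the point. This does not sink your strategy, because you only need a map: $\Sigma^\infty K(\Zptw,1)\wedge H\Z/p^n$ is $1$-connective with $\ul{\pi}_1\wequi\Zptw/p^n\wequi\mu_{p^n}$ (right exactness of $\ul{\pi}_0$ on smashes of connective objects plus the sheaf identification above), so define $\alpha$ as the $1$-truncation map, i.e. compose your Hurewicz map smashed with $H\Z/p^n$ with $\tau_{\le 1}$ of the target; truncation is natural and compatible with the $t$-exact pro-étale pullbacks, so the local check is unaffected. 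Second, the reduction to the pro-étale site should not be justified by full faithfulness of $\nu^*$ on bounded-above objects: you do not know in advance that $\Sptw\wedge H\Z/p^n\in\SH(X_\et^\hyp)$ is bounded above (that is part of the claim), and for a general base Lemma \ref{lemm:proet-ff} only covers the bounded-above range. Argue instead as the paper does: $\nu^*$ is $t$-exact and restricts to a fully faithful, hence conservative, functor on hearts, so the pro-étale computation of all homotopy sheaves of $\nu^*(\Sptw\wedge H\Z/p^n)$, together with hypercompleteness of $X_\et^\hyp$, forces $\Sptw\wedge H\Z/p^n$ to lie in the heart with $\ul{\pi}_0\wequi\mu_{p^n}$.
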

\begin{proof}
For $E \in \SH(X_\et^\hyp)$, let $\ul{h}_i(E, \Z/p^n) = \ul{\pi}_i(E \wedge H\Z/p^n)$.
By hypercompleteness,
what we have to show is the following: $\ul{h}_i(\Sptw, \Z/p^n) = 0$ for $i \ne
0$, and $\ul{h}_0(\Sptw, \Z/p^n) \wequi \mu_{p^n}$.
The first condition we can check on the
stalks, so assume that $X$ has all $p^m$-th roots of unity for all $m$. Then
$\Sptw \wequi \1_p^\comp$ and so the claim is clear.

To determine $\ul{h}_0(\Sptw, \Z/p^n)$, we may work in
$\SH(X_\proet^\hyp)$ instead (since $\nu^*$ is $t$-exact and $\nu^{*\heart}$ is
fully faithful).
We can model $K(\Zptw, 1)$ by the bar construction on
$\Zptw$. This implies that the homotopy sheaves of $\Sptw[1] \wedge H\Z/p^n$
are given by the
sheafifications of $U \mapsto \tilde{H}_i(\Zptw(U), \Z/p^n)$, where on the right
hand side we mean ordinary (reduced) group homology. Since $\tilde{H}_1(A, \Z/p^n) = A/p^n
A$ for $A$ any abelian group, we find that $\ul{h}_1(K(\Zptw, 1), \Z/p^n) =
\Zptw/p^n \wequi \mu_{p^n}$ (where the last isomorphism can be checked pro-étale locally, whence
assuming that there are all roots of unity\NB{Since $\Z/p^n$ is discrete, the topological ring homomorphism $\Z_p
\to \Z/p^n$ has a continuous section. This implies that $0 \to M(X, \Z_p)
\xrightarrow{p^n} M(X, \Z_p) \to M(X, \Z/p^n) \to 0$ is exact for any
topological space $X$. This shows that $0 \to \Zptw(1) \xrightarrow{p^n} \Zptw(1)
\to \mu_{p^n} \to 0$ is exact when evaluated on any scheme with all roots of
unity (e.g. $w$-contractible ones).}).
This concludes the proof.
\end{proof}

Suppose $S$ is a scheme and $1/p \in S$. Let $C_n: (\A^1 \setminus 0)_S
\to (\A^1 \setminus 0)_S$ be given by raising
the coordinate to the $p^n$-th power. Since $1/p \in S$ this is étale. For each
$n$ we have a commutative diagram
\begin{equation*}
\begin{CD}
(\A^n \setminus 0)_S @>C_1>> (\A^n \setminus 0)_S \\
@VC_{n+1}VV                  @VC_nVV           \\
(\A^n \setminus 0)_S @=         (\A^n \setminus 0)_S.
\end{CD}
\end{equation*}
Hence we obtain an inverse system $\{C_n\}_n$ over $(\A^1 \setminus 0)_S$ with
limit $C = C_S := \lim_n C_n \in (\A^1 \setminus 0)_{S, \proet}$.

\begin{proposition} \label{prop:sigma}
The object $C$ is canonically a $\Zptw$-torsor, and hence is classified by an element
$\sigma = \sigma_S \in [*, K(\Zptw, 1)]_{\Shv((\A^1 \setminus 0)_{S,
\proet}^\hyp)}$.
It is stable under base change. Moreover if $i_1: S \to (\A^1 \setminus 0)_S$ is
the inclusion at the point $1$, then $i_1^*(\sigma) = *$.
\end{proposition}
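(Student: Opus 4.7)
The plan is to exhibit $C$ as a $\Zptw$-torsor in the pro-étale topos and then invoke the classification of torsors to produce $\sigma$. First I would note that since $1/p \in S$, the $p^n$-th power map $t \mapsto t^{p^n}$ on $(\A^1 \setminus 0)_S$ is finite étale (its derivative $p^n t^{p^n-1}$ is a unit), and the group scheme $\mu_{p^n}$ acts freely on the source by $(\zeta, t) \mapsto \zeta t$ with quotient $C_n$. This exhibits $C_n$ as a $\mu_{p^n}$-torsor over $(\A^1 \setminus 0)_S$.

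Next I would check compatibility along the system: the transition map $C_1 : C_{n+1} \to C_n$ over $(\A^1 \setminus 0)_S$ is equivariant for the $p$-th power surjection $\mu_{p^{n+1}} \to \mu_{p^n}$, $\zeta \mapsto \zeta^p$, since $(\zeta t)^p = \zeta^p t^p$. Passing to the cofiltered limit in $(\A^1 \setminus 0)_{S,\proet}$ therefore equips $C = \lim_n C_n$ with a torsor structure under $\lim_n \mu_{p^n} = \Zptw$, and by the standard bijection between $\Zptw$-torsors and maps to $K(\Zptw,1)$ in an $\infty$-topos this classifies an element $\sigma \in \H^1_\proet((\A^1 \setminus 0)_S, \Zptw) = [*, K(\Zptw,1)]$.

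For stability under base change, I would observe that for any $f : S' \to S$ the pullback of each $C_n$ remains the $p^n$-th power map, the $\mu_{p^n}$-torsor structure is manifestly natural, and pullback of schemes commutes with the cofiltered limit defining $C$ (exactly as in the proof of Theorem \ref{thm:Sptw-small}(1)). Finally, to compute the restriction to $1$, I would note that $i_1^* C_n$ is the scheme of $p^n$-th roots of unity $\mu_{p^n}$, regarded as a $\mu_{p^n}$-torsor via its translation action on itself; this is trivialized by the global section $1 \in \mu_{p^n}(S)$, and these sections are compatible in $n$, so $i_1^* C$ is a trivial $\Zptw$-torsor and hence $i_1^*(\sigma) = *$.

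I do not expect a serious obstacle; the only point requiring some care is verifying that the cofiltered limit of compatible torsor structures genuinely defines a $\Zptw$-torsor in the pro-étale topos, but this reduces to the fact (already used in Theorem \ref{thm:Sptw-small}(1)) that $\Zptw$ is represented as a pro-étale scheme, so the torsor axioms can be checked scheme-theoretically and are preserved under the inverse limit.
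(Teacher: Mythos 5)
Your proposal is correct and follows essentially the same route as the paper: exhibit each $C_n$ as a $\mu_{p^n}$-torsor via the multiplication/translation action, check equivariance of the transition maps for $\mu_{p^{n+1}} \to \mu_{p^n}$, pass to the cofiltered limit (using that limits commute) to get the $\Zptw$-torsor structure, deduce base-change stability from representability of $C$, and observe that $i_1^* C_n = \mu_{p^n}$ with its compatible unit sections trivializes the restricted torsor. The only cosmetic slip is the phrase ``with quotient $C_n$'' (the quotient of the source by $\mu_{p^n}$ is the base $(\A^1\setminus 0)_S$, and $C_n$ is the quotient map/total space), which does not affect the argument.
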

\begin{proof}
Since $C$ is representable, it is stable under base change. Note that $C_n =
S[t, t^{-1}, u]/(u^{p^n} - t)$; it follows immediately that $i_1^* C_n =
\mu_{p^n}$ and so $i_1^* C$ is the trivial torsor. It remains to explain
the $\Zptw$-torsor structure. We have the multiplication map $(\A^1 \setminus 0)
\times (\A^1 \setminus 0) \to \A^1 \setminus 0$. Restricting the first factor
to $\mu_{p^n} \subset \A^1 \setminus 0$ we obtain $\mu_{p^n} \times C_n \to C_n$.
The structure map $C_n \to \A^1 \setminus 0$ is equivariant for the trivial
action by $\mu_{p^n}$ on the target. Taking the inverse limit we obtain an
action $\Zptw \times C \to C$, and the structure map $C \to \A^1 \setminus 0$ is
equivariant. To prove that this is a torsor, it remains to show that the
shearing map $\Zptw \times C \to C \times_{\A^1 \setminus 0} C$ is an
isomorphism. Since limits commute, for this it is enough to show that each $C_n$
is a $\mu_{p^n}$-torsor, which is clear.
\end{proof}

Upon stabilization and $p$-completion, we obtain a map $\sigma' = (\Sigma^\infty
\sigma)_p^\comp: \1 \to \Sptw[1] \in \SH((\A^1 \setminus 0)_{S, \proet}^\hyp)_p^\comp$,
stable under base change. If $\A^1_S$ is étale-locally of uniformly bounded étale cohomological
dimension, e.g. $S = Spec(\Z[1/p])$, by Lemma \ref{lemm:proet-ff}
there is a unique map $\sigma: \1 \to \Sptw[1] \in \SH((\A^1 \setminus
0)_{S, \et}^\hyp)_p^\comp$ with $\nu^*(\sigma) = \sigma'$. This map $\sigma$
is also stable under base change whenever defined.

\begin{definition} \label{def:sigma}
Let $S/\Z[1/p]$ be a base scheme.
We denote by \[ \sigma: \1 \to \Sptw[1] \in \SH((\A^1 \setminus 0)_{S, \et}^\hyp)_p^\comp \] 
the map obtained from the one constructed above by base change to $S$.
\end{definition}

\section{Complements on étale cohomology}
\label{sec:etale-cohomology}

\begin{lemma} \label{lemm:etale-base-change}
Consider a cartesian square
\begin{equation*}
\begin{CD}
Y' @>g'>> Y   \\
@Vf'VV  @VfVV \\
X' @>g>> X,
\end{CD}
\end{equation*}
with $g$ étale.
Then \[ g^*f_* \wequi f'_* g'^*: \SH(Y_\et^\hyp) \to \SH(X_\et'^\hyp). \]
\end{lemma}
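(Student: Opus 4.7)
The plan is to exploit the existence of left adjoints to $g^*$ and $g'^*$ coming from étaleness, reformulate the claim as an equivalence of colimit-preserving functors via mate calculus, and verify the latter on representables.

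Since $g$ is étale, composition with $g$ gives a functor $X'_\et \to X_\et$ that sends covers to covers. The induced $g^*$ on sheaves of spaces admits a left adjoint $g_\#$, which on representables is given by $g_\# h_{U \to X'} = h_{U \to X' \to X}$. This passes to the hypercomplete topoi (since $g^*$ is left exact and colimit preserving, it preserves $\infty$-connective maps and so hypercompleteness), and then to the stabilizations, yielding an adjunction $g_\# \dashv g^* : \SH(X'^\hyp_\et) \adj \SH(X_\et^\hyp)$. Étale morphisms are stable under pullback, so $g'$ is also étale and we likewise obtain $g'_\# \dashv g'^*$.

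The canonical Beck--Chevalley map $g^* f_* \Rightarrow f'_* g'^*$ is the mate of the identification $g'^* f^* \wequi f'^* g^*$ arising from the cartesian square. Taking mates along $g_\# \dashv g^*$ and $g'_\# \dashv g'^*$, it is an equivalence if and only if its companion $f^* g_\# \Rightarrow g'_\# f'^* : \SH(X'^\hyp_\et) \to \SH(Y_\et^\hyp)$ is an equivalence. Both sides of this second transformation are composites of left adjoints, hence preserve colimits; so it suffices to check on a set of generators.

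I would take as generators the suspension spectra $\Sigma^\infty_+ U$ for $U \in X'_\et$ qcqs. On these, $g_\# \Sigma^\infty_+ U \wequi \Sigma^\infty_+(U \to X)$, so
\[ f^* g_\# \Sigma^\infty_+ U \wequi \Sigma^\infty_+(U \times_X Y \to Y). \]
On the other hand, $f'^* \Sigma^\infty_+ U \wequi \Sigma^\infty_+(U \times_{X'} Y' \to Y')$, whence
\[ g'_\# f'^* \Sigma^\infty_+ U \wequi \Sigma^\infty_+(U \times_{X'} Y' \to Y). \]
The cartesian hypothesis produces a canonical equivalence $U \times_{X'} Y' \wequi U \times_X Y$ over $Y$, and a routine diagram chase confirms that this identification realizes the mate comparison map.

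The principal obstacle is entirely formal: one must ensure that hypercompletion and stabilization are compatible with $g_\#$, so that $g_\#$ really exists at the level of $\SH(X'^\hyp_\et)$ and assumes the expected value on suspension spectra of representables, and that the identification on generators is indeed the Beck--Chevalley mate. These points are routine once one knows that $g^*$ preserves $\infty$-connective maps and that colimit-preserving functors between presentable stable $\infty$-categories are determined by their values on a set of generators.
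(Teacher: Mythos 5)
Your proposal is correct and matches the paper's approach: the paper's proof is exactly the one-line observation that the base change map is invertible because étale pullback admits a left adjoint $g_\#$ (so one passes to the mate $g'_\# f'^* \Rightarrow f^* g_\#$ and checks it on representables), which is precisely the argument you spell out. The only cosmetic remark is that the Beck--Chevalley companion is conventionally written in the direction $g'_\# f'^* \Rightarrow f^* g_\#$, but since you verify it is an equivalence on generators this does not affect anything.
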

\begin{proof}
Clear by existence of left adjoints to étale pullback, stable under base change.
\end{proof}

\begin{lemma} \label{lemm:etale-f*-cocont}
Let $f: X \to Y$ be a qcqs morphism with $X, Y$ of uniformly bounded $p$-étale
cohomological dimension.
Then $f_*: \SH(X_\et^\hyp)_p^\comp \to \SH(Y_\et^\hyp)_p^\comp$ preserves
colimits and has finite $p$-cohomological
dimension: there exists $N$ such that $f_*(\SH(X_\et^\hyp)_{\ge i}/p) \subset
\SH(Y_\et^\hyp)_{\ge i-N}$, for all $i$.
\end{lemma}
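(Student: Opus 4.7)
The plan is to deduce both assertions from Lemma \ref{lemm:SH-compact-objects} applied to the base-changed scheme $X \times_Y U$ for $U$ running over a generating family of $\SH(Y_\et^\hyp)$. Let $n$ be a uniform bound on the étale cohomological dimension of finitely presented qcqs étale $X$-schemes. By Lemma \ref{lemm:uniformly-bounded-compact-generation}, Postnikov towers converge in $\Shv(X_\et^\hyp)$, and both $\SH(X_\et^\hyp)$ and $\SH(Y_\et^\hyp)$ are compactly generated by suspension spectra $\Sigma^\infty_+ U$ of finitely presented qcqs étale schemes over the base. The crucial observation is that when $U \to Y$ is finitely presented qcqs étale, the base change $X \times_Y U \to X$ is again finitely presented qcqs étale (using that $f$ is qcqs), so $\cd(X \times_Y U) \le n$; by adjunction one has $\map(U, f_*(-)) \wequi \map(X \times_Y U, -)$.

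For the cohomological-dimension estimate, I would show that $N = n$ works. Since $\SH(Y_\et^\hyp)$ is hypercomplete, $f_* E \in \SH(Y_\et^\hyp)_{\ge i - n}$ is equivalent to the vanishing of $\ul{\pi}_k(f_* E)$ for $k < i - n$; the latter sheaf is the sheafification of the presheaf $U \mapsto [U[k], f_* E] \wequi [X \times_Y U[k], E]$, so it suffices to verify vanishing on finitely presented qcqs étale $Y$-schemes $U$. But Lemma \ref{lemm:SH-compact-objects}(2) applied to $X \times_Y U$ (which has cohomological dimension $\le n$) yields $\map(X \times_Y U, E) \in \SH_{\ge i - n}$, hence $[X \times_Y U[k], E] = 0$ for $k < i - n$, as required.

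For colimit preservation: $f_*$ is exact between stable $\infty$-categories (being right adjoint to the exact functor $f^*$), so preserves finite colimits, and the claim reduces to preservation of filtered colimits. For a functor between compactly generated presentable stable categories, this can be tested on compact generators of the target: $f_*$ preserves filtered colimits iff $\map(U, f_*(-))$ does for each compact generator $U$ of $\SH(Y_\et^\hyp)$. Now $\map(U, f_*(-)) \wequi \map(X \times_Y U, -)$, and $X \times_Y U$ is a compact object of $\SH(X_\et^\hyp)$ by Lemma \ref{lemm:SH-compact-objects}(3), so this functor preserves filtered colimits.

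The only point requiring attention is the stability of ``finitely presented qcqs étale'' under base change along the qcqs morphism $f$, which is what supplies the uniform bound on $\cd(X \times_Y U)$ and hence makes the uniform hypothesis on $X$ applicable across the whole generating family of $Y_\et$; there is no substantive obstacle beyond this bookkeeping.
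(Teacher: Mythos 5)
Your proposal is correct and follows essentially the same route as the paper: both arguments rest on the observation that $f$ qcqs makes $X \times_Y U$ a qcqs (finitely presented) étale $X$-scheme for $U \in Y_\et$ qcqs, so that via the adjunction $\map(U, f_*E) \wequi \map(X \times_Y U, E)$ one can invoke Lemma \ref{lemm:SH-compact-objects} (parts (2) and (3), with Postnikov convergence supplied by Lemma \ref{lemm:uniformly-bounded-compact-generation}) to get both the connectivity bound and preservation of (filtered) colimits. Your version merely makes explicit two steps the paper leaves implicit — checking the connectivity of $f_*E$ on homotopy sheaves and unwinding the ``left adjoint preserves compact generators'' argument — so there is nothing further to add.
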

\begin{proof}
Since $f$ is qcqs, $f^* = X \times_Y \bullet$ preserves qcqs schemes. It follows
now from Lemma \ref{lemm:uniformly-bounded-compact-generation} that
$f^*: \SH(Y_\et^\hyp)_p^\comp \to \SH(X_\et^\hyp)_p^\comp$ preserves compact generators.
Consequently $f_*$ preserves colimits.

Let $X$ be of uniformly bounded $p$-étale cohomological dimension $\le N$.
Let $A \in Y_\et$ be qcqs and $E \in \SH(X_\et^\hyp)_{\ge 0}$. Then $\map(A, f_*
E/p) \wequi \map(f^*A, E/p) \in \SH_{\ge -N}$, by Lemma
\ref{lemm:SH-compact-objects}(2) (and
Lemma \ref{lemm:uniformly-bounded-compact-generation}).
Since $A$ was arbitrary, this implies that
$f_*E/p \in \SH(Y_\et^\hyp)_{\ge -N}$. This concludes the proof.
\end{proof}

\begin{corollary}[homotopy invariance] \label{cor:htpy-inv}
Let $X$ be a scheme, $1/p \in X$ and suppose that there is an étale cover
$\{X_\alpha \to X\}_\alpha$ such that for each $\alpha$, both $X_\alpha$ and $\A^1
\times X_\alpha$ are of uniformly bounded $p$-étale cohomological dimension.
Then $q^*: \SH(X_\et^\hyp)_p^\comp \to \SH(\A^1 \times X_\et^\hyp)_p^\comp$ is
fully faithful.
\end{corollary}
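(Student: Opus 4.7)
The plan is to combine étale descent, the $p$-completion machinery, and a Postnikov-tower devissage to reduce the statement to the classical $\A^1$-homotopy invariance of torsion étale cohomology, which holds for coefficients whose torsion orders are invertible on the base.

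First, I would reduce to the case where both $X$ and $\A^1\times X$ are of uniformly bounded étale cohomological dimension. Full faithfulness of $q^*_p{}^\comp$ is equivalent to the unit $\eta_E:E\to q_*q^*E$ being a $p$-equivalence for every $E\in\SH(X_\et^\hyp)$, and this condition can be tested after pullback along the covering $\{X_\alpha\to X\}$ (using Lemma~\ref{lemm:etale-base-change} to commute the adjunction with étale pullback, and the fact that étale covers give jointly conservative families of pullbacks on hypercomplete spectral sheaves). After this reduction, Lemmas~\ref{lemm:uniformly-bounded-compact-generation} and~\ref{lemm:etale-f*-cocont} give: Postnikov towers converge in each of $\SH(X_\et^\hyp)$ and $\SH((\A^1\times X)_\et^\hyp)$, the pullback $q^*$ is $t$-exact, and $q_*$ preserves colimits and has finite cohomological dimension.

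Next I would reduce to $p$-torsion objects. Let $C_E$ denote the cofiber of $\eta_E$. Since $q_*, q^*$ are stable and $q_*$ preserves colimits (hence cofibers), the canonical maps identify $C_E/p$ with the cofiber of $E/p\to q_*q^*(E/p)$. So it suffices to prove that $\eta_F$ is an equivalence whenever $F\in\SH(X_\et^\hyp)$ has $p$-torsion homotopy sheaves (every $E/p$ is of this form, since its homotopy sheaves sit in extensions of $\ul\pi_\ast(E)/p$ by the $p$-torsion subsheaves of $\ul\pi_\ast(E)$).

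Then comes the devissage. Because $q^*$ is $t$-exact and preserves truncation while $q_*$ preserves limits and has finite cohomological dimension, the functor $q_*q^*$ commutes with Postnikov limits $F\wequi\lim_n F_{\le n}$; hence $\eta_F$ is an equivalence iff each $\eta_{F_{\le n}}$ is. An induction on the range of nonzero homotopy sheaves, using the fiber sequences $\ul\pi_n(F)[n]\to F_{\le n}\to F_{\le n-1}$, further reduces the claim to the case $F=G$ with $G\in\SH(X_\et^\hyp)^\heart$ a $p$-torsion étale sheaf of abelian groups. For such a $G$ the statement $G\wequi q_*q^*G$ is precisely classical $\A^1$-homotopy invariance of étale cohomology with torsion coefficients of order invertible on $X$ (i.e.\ $R^{>0}q_*q^*G=0$ and $R^0q_*q^*G\wequi G$); see e.g.\ \cite[XV.2.2]{sga4}. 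The main obstacle is the Postnikov devissage itself: one must carefully use the finite cohomological dimension of $q_*$ both to ensure that $q_*q^*F$ has convergent Postnikov tower and to propagate the heart-level result through the layers, which is exactly what the étale-finiteness hypothesis buys us.
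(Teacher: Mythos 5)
Your overall strategy coincides with the paper's: reduce mod $p$ (so the homotopy sheaves become $p$-power torsion), use Lemma \ref{lemm:etale-base-change} together with conservativity of étale pullbacks to assume $X$ and $\A^1\times X$ themselves are of uniformly bounded étale cohomological dimension, quote \cite[Corollaire XV.2.2]{sga4} in the heart, and then devissage through the $t$-structure. Where you differ is the direction of the devissage. The paper goes up: from the heart to bounded-above spectra by writing these as filtered colimits of bounded ones and using that $q_*$ is cocontinuous (Lemma \ref{lemm:etale-f*-cocont}), and then to arbitrary spectra by the finite-cohomological-dimension trick (Lemma \ref{lemm:coh-dimension-trick}). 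You go down: from arbitrary $F$ to its truncations $F_{\le n}$ by commuting $q_*q^*$ past the Postnikov limit. That first step is fine, but note what it really uses: $t$-exactness of $q^*$ plus convergence of Postnikov towers (left completeness) on both sides, i.e. Lemma \ref{lemm:uniformly-bounded-compact-generation}, rather than the finite cohomological dimension of $q_*$ as such.

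The one step that does not work as stated is the passage from $F_{\le n}$ to the heart by ``induction on the range of nonzero homotopy sheaves'': $F_{\le n}$ is only bounded above, so its homotopy sheaves may be nonzero in all degrees $\le n$ and the descending induction through the layers $\ul{\pi}_m(F)[m]\to F_{\le m}\to F_{\le m-1}$ never terminates. Finite cohomological dimension of $q_*$ is not the tool that closes this gap: it controls the loss of connectivity on connective objects, i.e. the unbounded-above direction, which is exactly how the paper deploys it via Lemma \ref{lemm:coh-dimension-trick}. To finish your version, handle the tail either as the paper does --- write $F_{\le n}\wequi\colim_m (F_{\le n})_{\ge -m}$, a filtered colimit of bounded spectra where the finite layer induction does apply, and use that $q_*q^*$ preserves filtered colimits by Lemma \ref{lemm:etale-f*-cocont} --- or note that the fiber of $\eta_{F_{\le n}}$ agrees with the fiber of $\eta_{(F_{\le n})_{< -m}}$ for every $m$ (the bounded part contributes nothing), and the latter is arbitrarily coconnective because $q_*$ is left $t$-exact, so the fiber has vanishing homotopy sheaves and is zero by hypercompleteness. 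With either repair your argument goes through, and is essentially a reorganization of the paper's proof with the roles of limits and colimits interchanged.
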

\begin{proof}
Let $E \in \SH(X_\et^\hyp)$. We wish to prove that $E \to q_*q^*E$ is a $p$-equivalence, or
equivalently an equivalence mod $p$. We may thus assume that $E$ has $p^2$-torsion
homotopy sheaves. By Lemma
\ref{lemm:etale-base-change} we may assume that $X$ and $\A^1_X$ are of
uniformly bounded $p$-étale cohomological dimension. If $E \in
\SH(X_\et^\hyp)^\heart$, the result is \cite[Corollaire XV.2.2]{sga4}. The result for all
bounded above spectra follows by taking colimits via Lemma
\ref{lemm:etale-f*-cocont}. The general case follows from Lemma
\ref{lemm:coh-dimension-trick}.
\end{proof}

\begin{corollary}[proper base change] \label{cor:proper-base-change}
Consider a cartesian square
\begin{equation*}
\begin{CD}
Y' @>g'>> Y   \\
@Vf'VV  @VfVV \\
X' @>g>> X
\end{CD}
\end{equation*}
with $f$ proper.
Assume that there is an étale cover $\{X_\alpha \to X\}_\alpha$ such that each
$X_\alpha, X'_\alpha, Y_\alpha, Y'_\alpha$ is of uniformly bounded $p$-étale
cohomological dimension. (Here $Y_\alpha := Y \times_X X_\alpha$, and so on.)
Then $g^*f_* \wequi f'_* g'^*:
\SH(Y_\et^\hyp)_p^\comp \to \SH(X_\et'^\hyp)_p^\comp$.
\end{corollary}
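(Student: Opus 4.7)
The plan is to mirror the proof of Corollary \ref{cor:htpy-inv}: reduce to the heart via standard $t$-structural arguments and there invoke the classical proper base change theorem of \cite[Théorème XII.5.1]{sga4} for torsion abelian étale sheaves.

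Write $\beta_E \colon g^* f_* E \to f'_* g'^* E$ for the base change map and $C(E) := \mathrm{cofib}(\beta_E)$ for its cofiber, viewed as a functor of $E$. Since étale pullback is jointly conservative on hypercomplete spectral sheaves and commutes with $f_*, f'_*$ by Lemma \ref{lemm:etale-base-change}, one may pull back along the given cover $\{X_\alpha \to X\}$ (pulling the whole cartesian square back accordingly) and thereby assume that all four schemes $X, X', Y, Y'$ are uniformly of bounded étale cohomological dimension. Under this assumption, Lemma \ref{lemm:etale-f*-cocont} says $f_*$ and $f'_*$ are cocontinuous and of finite cohomological dimension; together with the $t$-exactness and cocontinuity of $g^*, g'^*$, this makes $C \colon \SH(Y_\et^\hyp) \to \SH(X'_\et^\hyp)$ a stable, cocontinuous functor of finite cohomological dimension, whose target has nondegenerate $t$-structure by Lemma \ref{lemm:uniformly-bounded-compact-generation}.

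Since $\beta_E$ only needs to be an equivalence after $p$-completion, it suffices to show $C(E) \otimes \1/p \simeq 0$; stability of $C$ identifies this with $C(E/p)$, so it suffices to prove that $C$ vanishes on spectra with $p^2$-torsion homotopy sheaves. On the heart of this subcategory the vanishing amounts to $g^* R^i f_* F \simeq R^i f'_* g'^* F$ for $F$ a $p$-torsion abelian étale sheaf and $f$ proper, the classical proper base change theorem. Finite Postnikov filtrations propagate the vanishing to bounded $p^2$-torsion spectra; right completeness of the $t$-structure together with cocontinuity of $C$ extend this to bounded-above $p^2$-torsion spectra (writing $E = \colim_n \tau_{\ge -n} E$ with each $\tau_{\ge -n} E$ bounded). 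Finally, Lemma \ref{lemm:coh-dimension-trick} applied to the restriction of $C$ to the stable subcategory of $p^2$-torsion spectra yields $C \equiv 0$ there, whence $\beta_E$ is a $p$-equivalence for all $E$.

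The main obstacle is the bookkeeping of $t$-structural manipulations and cohomological-dimension bounds needed to make each reduction legitimate; the substantive mathematical input is the classical proper base change theorem applied at the level of the abelian heart of the étale $\infty$-topos.
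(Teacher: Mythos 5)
Your proposal is correct and follows essentially the same route as the paper: reduce mod $p$ (replacing $E$ by $E/p$ so the homotopy sheaves are $p$-power torsion), use Lemma \ref{lemm:etale-base-change} and conservativity of étale pullbacks to assume all four schemes have uniformly bounded étale cohomological dimension, settle the heart case by classical proper base change \cite[Theorem XII.5.1]{sga4}, extend to bounded-above objects via cocontinuity of $f_*, f'_*$ (Lemma \ref{lemm:etale-f*-cocont}), and conclude with the finite-cohomological-dimension argument of Lemma \ref{lemm:coh-dimension-trick}. No gaps beyond the same mild imprecision the paper itself allows (e.g.\ treating ``$p^2$-torsion homotopy sheaves'' as a stable condition), so nothing further is needed.
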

\begin{proof}
Let $E \in \SH(Y)$. We need to prove that $g^*f_* E \to f'_* g'^* E$ is a
$p$-equivalence, i.e. an equivalence mod $p$. We may thus replace $E$ by $E/p$
and assume that $E$ has $p^2$-torsion homotopy sheaves. By Lemma
\ref{lemm:etale-base-change} we may assume that $X, X', Y, Y'$ are of
uniformly bounded $p$-étale cohomological dimension. If $E \in \SH(Y_\et^\hyp)^\heart$, the
result follows from \cite[Theorem XII.5.1]{sga4}. The functors $f_*, f'_*$
preserve colimits by Lemma \ref{lemm:etale-f*-cocont}, so we get the result for
all bounded above spectra. Moreover by the same lemma, all our functors are of
finite cohomological dimension. Thus we are done by Lemma
\ref{lemm:coh-dimension-trick}.
\end{proof}

We now come to the analog of Corollary \ref{cor:htpy-inv} for $\sigma$. Thus let
$S$ be a scheme and denote by $q: (\A^1 \setminus 0)_S \to S$ the canonical map.
Denote by $i: S \to (\A^1 \setminus 0)_S$ the inclusion at the point $1$.
For $E \in \SH(S_\et^\hyp)$ we consider the map $q_* q^* E \to q_*
(i_* i^*) q^* E \wequi E$, where the first map is the unit of adjunction and the
equivalence just comes from $qi = \id$. We denote by $E^\Gm$ the fiber of $q_*
q^* E \to E$.
\NB{The map $q_* q^* E \to E$ splits the unit $E \to q_* q^* E$, so we in fact have
$q_* q^* E \wequi E \vee E^\Gm$.}
Now let $E \in \SH(S_\et^\hyp)_p^\comp$. Consider the morphism $\tilde \sigma_E:
\imap(\Sptw[1],
E) \to q_*q^* E$ constructed as follows. Since $\Sptw$ is invertible, we have
$\imap(\Sptw[1], E) \wequi D(\Sptw[1]) \wedge E$. Now by adjunction we need to
construct a map $q^*(D(\Sptw[1]) \wedge E) \wequi D(\Sptw[1]) \wedge q^*(E) \to
q^*(E)$\footnote{Recall that we denote by $D(\ph)$ the passage to strong
duals.}, or equivalently a map $q^* E \to \Sptw[1] \wedge q^*(E)$. We take
$\id_{q^*E} \wedge \sigma$, where $\sigma: \1 \to \Sptw[1]$ is the map of
Definition \ref{def:sigma}. Note further that the composite
$\imap(\Sptw[1], E) \to q_*q^*E \to E \wequi q_* i_* i^* q^* E$ is trivial(ized):
this follows from the fact that the $\Zptw$-torsor $i^* C$ is trivial (see
Proposition \ref{prop:sigma}). Consequently $\tilde\sigma_E$ factors through
$E^\Gm$, yielding finally $\sigma_E: \imap(\Sptw[1], E) \to E^\Gm$.

\begin{proposition}[$\sigma$-locality] \label{prop:sigma-locality}
Let $X$ be a scheme, $1/p \in X$ and suppose that there is an étale cover $\{X_\alpha \to
X\}_\alpha$ such that each $X_\alpha$ and $(\A^1
\setminus 0) \times X_\alpha$ are of uniformly bounded $p$-étale cohomological dimension. Then
for every $E \in \SH(X_\et^\hyp)_p^\comp$, the map $\sigma_E: \imap(\Sptw[1], E)
\to E^\Gm$ is an equivalence.
\end{proposition}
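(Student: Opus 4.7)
\emph{Proof plan.} The plan is to follow the template of Corollaries~\ref{cor:htpy-inv} and~\ref{cor:proper-base-change}: descend by étale cover, reduce modulo $p$, and use a $t$-structure argument to pin the claim down at the heart. Since $\sigma_E$, $\Sptw$, $q_*$ and $i_*$ are all stable under étale base change (Lemma~\ref{lemm:etale-base-change}), we may replace $X$ by each member of the given étale cover and assume that both $X$ and $(\A^1 \setminus 0)_X$ are of uniformly bounded étale cohomological dimension. Lemma~\ref{lemm:etale-f*-cocont}, together with the invertibility of $\Sptw$, then implies that the two functors $E \mapsto \imap(\Sptw[1], E)$ and $E \mapsto E^\Gm$ preserve colimits and are of finite cohomological dimension; so is the cofiber functor $F(E) := \mathrm{cof}(\sigma_E)$. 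As both sides are $p$-complete, we may show $F$ vanishes on $E$ with $p^n$-torsion homotopy sheaves, and by Lemma~\ref{lemm:coh-dimension-trick} together with a Postnikov induction it suffices to treat $E \in \SH(X_\et^\hyp)^\heart$ that is an étale sheaf of $\Z/p^n$-modules.

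For such $E$ both sides admit explicit computation. Lemma~\ref{lemm:plausibility} gives $\Sptw \wedge H\Z/p^n \wequi \mu_{p^n}$, whence
\[ \imap(\Sptw[1], E) \wequi (E \otimes_{\Z/p^n} \mu_{p^n}^{-1})[-1]. \]
On the other side, the Kummer exact sequence, the projection formula for $q^*$, and the vanishing of higher étale cohomology of $\Gm$ with $\Z/p^n$-coefficients on a uniformly bounded base yield $R^0 q_* q^* E = E$, $R^1 q_* q^* E \wequi E \otimes_{\Z/p^n} \mu_{p^n}^{-1}$, and $R^i q_* q^* E = 0$ for $i \ge 2$. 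Combined with the splitting of $q_* q^* E \to E$ by $i$, this gives $E^\Gm \wequi (E \otimes_{\Z/p^n} \mu_{p^n}^{-1})[-1]$.

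The main obstacle remaining is to verify that $\sigma_E$ implements the canonical identification of these two descriptions. Tracing back through Proposition~\ref{prop:sigma}, the map $\sigma: \1 \to \Sptw[1]$ corresponds, under the Hurewicz identification $[\1, \Sptw[1] \wedge H\Z/p^n] \wequi \H^1_\et((\A^1 \setminus 0)_X, \mu_{p^n})$, to the class of the $\mu_{p^n}$-torsor $C_n = \mathrm{Spec}(\scr O[u]/(u^{p^n} - t))$. This is precisely the Kummer class of the coordinate $t$, which (again by the Kummer sequence) generates $R^1 q_* \mu_{p^n}$ as a free rank-one $\Z/p^n$-module. Multiplication by this generator implements the identification above, and unwinding the construction of $\tilde\sigma_E$ and $\sigma_E$ at the end of Section~\ref{sec:proet} shows that this is exactly the map induced by $\sigma_E$ on the heart. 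Once this compatibility is granted, combining the two descriptions with the reductions in the first paragraph completes the proof.
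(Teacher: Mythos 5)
Your first paragraph reproduces the paper's own reductions: replace $X$ étale-locally, pass to $E/p$, and use Lemmas \ref{lemm:etale-f*-cocont}, \ref{lemm:uniformly-bounded-compact-generation} and \ref{lemm:coh-dimension-trick} to reduce to the heart (note that for finite cohomological dimension of $\imap(\Sptw[1],-)$ invertibility alone is not enough; you also need connectivity of $D(\Sptw)$, which holds because it is pro-étale locally $\1$). After that you diverge: the paper never computes $Rq_*q^*E$ over the base. Instead it shows that both $E^\Gm$ (via the $\P^1$-trick, homotopy invariance and proper base change) and $\imap(\Sptw[1],E)$ are stable under arbitrary base change, reduces to a separably closed field and then to $E=H\Z/p$, where the whole question becomes whether a single map $\Z/p\to\Z/p$ is nonzero. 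Your direct relative computation could be made to work, but as stated it is shaky: the assertions $R^1q_*q^*E\wequi E\otimes_{\Z/p^n}\mu_{p^n}^{-1}$ and $R^iq_*q^*E=0$ for $i\ge 2$ are stalkwise statements, and ``vanishing of higher cohomology of $\Gm$ on a uniformly bounded base'' together with an unproved projection formula for $q_*$ does not establish them; the clean justification is exactly base-change stability plus passage to geometric points, i.e. the paper's route.

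The genuine gap is the step you label as ``unwinding''. That $\sigma\wedge H\Z/p^n$ classifies the Kummer torsor $C_n$ is the one non-formal input of the whole proposition, and it is not a bookkeeping exercise: over a general $X$ the map $\sigma$ is only defined by base change from $Spec(\Z[1/p])$, and its construction lives in the pro-étale topos, where it is $\Sigma^\infty$ of an unstable map $\sigma_0\colon S^1\to K(\Zptw,1)$ classifying $C$. To compare the stable $H\Z/p^n$-image of $\Sigma^\infty\sigma_0$ with the unstable torsor class one must reduce to $Spec(\Z[1/p])$, use full faithfulness of $\nu^*$ (Lemma \ref{lemm:proet-ff}/Proposition \ref{prop:proet-characterise}), and then argue — as in the final paragraph of the paper's proof — via the composite $\1\to H\Z\to \Sptw[1]\wedge H\Z\to H\Zptw[1]$, using that the counit $H\Z\wedge\Sigma^\infty\Omega^\infty H\Zptw[1]\to H\Zptw[1]$ is a $p$-equivalence (Lemma \ref{lemm:plausibility}). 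Without this comparison, your identification of $\sigma_E$ on the heart with ``multiplication by the Kummer generator'' is an assertion rather than a proof, and it is precisely the point where the paper spends its effort. If you supply that argument (and tighten the relative Kummer computation as above), your plan goes through and is close in spirit to the paper's.
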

\begin{proof}
We are trying to prove that a certain map is a $p$-equivalence. Since all
functors involved are stable, we may replace $E$ by $E/p$; hence we assume that
$E$ is $p^2$-torsion and need to prove that the appropriate map is a plain
equivalence. Using Lemma \ref{lemm:etale-base-change}, we may assume that
$X, \A^1_X$ are of uniformly bounded $p$-étale cohomological dimension. Recall that
tensoring with an invertible object is an equivalence, so preserves colimits;
hence $\map(\Sptw[1], \bullet)$ preserves colimits.
Note also that $D(\Sptw) \in \SH(X_\et^\hyp)_{\ge 0}$, since this
object is pro-étale locally equivalent to $\1$. Using Lemmas
\ref{lemm:etale-f*-cocont} and \ref{lemm:uniformly-bounded-compact-generation}
we may apply Lemma \ref{lemm:coh-dimension-trick}. Consequently we may assume
that $E$ is bounded above, which using cocontinuity we immediately
reduce to $E \in \SH(X_\et^\hyp)^\heart$. We may further assume that $E$
corresponds to a sheaf of $\Z/p$-vector spaces.

We have defined $E^\Gm$ as a summand of $q_*q^* E$, where $q: (\A^1 \setminus
0)_S \to S$ is the projection. Since $\P^1$ is covered by two copies of $\A^1$
with intersection $\A^1 \setminus 0$, using homotopy invariance for étale
cohomology \cite[Corollaire XV.2.2]{sga4}, we find that $E^\Gm$ is also a
summand of $r_*r^* E[-1]$, where $r: \P^1_S \to S$ is the projection. From proper
base change \cite[Theorem XII.5.1]{sga4} we deduce that $E^\Gm$ is stable under
arbitrary base change (for $E \in \SH(X_\et^\hyp)^\heart$). Of course
$\imap(\Sptw[1], E) \wequi D(\Sptw[1]) \wedge E$ is also stable under base
change. Using that geometric
points serve as stalks for the étale topology, we reduce to the case where $X =
Spec(k)$, $k$ a separably closed field. In this case $\Shv(X_\et^\hyp)$ is just
the topos of spaces, and so in particular $E$ corresponds to just a
$\Z/p$-vector space. Using that étale cohomology commutes with filtered
colimits, we reduce to the case $E = H\Z/p$. In this case $E^\Gm \wequi \H^1(\A^1
\setminus 0, \Z/p)[-1] \wequi H\Z/p[-1]$ \cite[Proposition
VII.1.1(ii)]{sga5}. Since also $\Sptw \wequi \1$ we find
that similarly $\imap(\Sptw[1], H\Z/p) \wequi H\Z/p[-1]$. Thus the map $\sigma_E$
we are trying to show is an equivalence corresponds simply to a map $\sigma:
\Z/p \to \Z/p$, which is an isomorphism if and only if it is non-zero. In fact
$\H^1_\et(\A^1 \setminus 0, \Z/p)$ classifies $\Z/p$-torsors, $\sigma$
corresponds to such a torsor, and we need to show this torsor is non-zero.
I claim that that $\sigma$ corresponds to $C_1$ (from the end of Section \ref{sec:proet}),
which is clearly non-trivial.

It thus remains to prove the claim. It suffices to show that the map $\sigma_1
\wedge H\Z/p: H\Z/p \to \Sptw[1] \wedge H\Z/p \wequi H\mu_p[1] \in \SH((\A^1 \setminus
0)_{X,\et}^\hyp)_p^\comp$ classifies $C_1$; here $\sigma_1: \1 \to \Sptw$ is the
map from the end of Section \ref{sec:proet}.
This claim is stable under base
change, so it suffices to prove this for $X = Spec(\Z[1/p])$, and hence by fully
faithfulness of $\nu^*$ we may prove it in $X_\proet$ instead. Recall that in
this context the
map $\sigma_1$ is given by $\Sigma^\infty \sigma_0$, where $\sigma_0: S^1 \to
K(\1, \Zptw) \wequi \Omega^\infty H\Zptw[1]$ classifies the torsor $C$. We
obtain the diagram
\[ \1 \xrightarrow{\eta} H\Z \xrightarrow{\sigma_1 \wedge H\Z} \Sptw[1] \wedge H\Z \wequi
H\Z \wedge \Sigma^\infty \Omega^\infty H\Zptw[1] \xrightarrow{\epsilon}
H\Zptw[1], \]
where $\eta$ is the  unit map and $\epsilon$ the co-unit of adjunction.
By construction, the composite map is adjoint to $\sigma_0$. The map $\epsilon$ is a
$p$-equivalence, by Lemma \ref{lemm:plausibility}. The claim follows.
\end{proof}

\begin{remark}[Clausen] \label{rmk:clausen}
Taking $E = \1$, we learn in particular that $q_*(\1) \wequi \1 \vee C$, where
$C \wequi \nSptw[-1]$. A sufficiently well-developed form of the six functors
formalism for étale cohomology with spectral coefficients should allow one to
prove ab initio that $C$ is invertible, circumventing our somewhat awkward
construction using the pro-étale topology.
\end{remark}

\section{The motivic category $\SH_\et(\bullet)$}
\label{sec:motivic-category}

Recall that a \emph{pre-motivic category} is a functor $\scr C: \Sch^\op \to
\Cat_\infty$, satisfying certain properties. Chiefly among them: each $\scr
C(X)$ is presentable, for each $f: X \to Y$ the functor $f^*: \scr C(Y) \to \scr
C(X)$ has a right adjoint $f_*$. If $f$ is smooth, there is a left adjoint
$f_\#$. The smooth base change formula holds. Typically one requires all $\scr
C(X)$ to be presentably symmetric monoidal and all $f^*$ to be symmetric
monoidal functors. Then the smooth projection formula is required to hold. One
then often asks for $\A^1$-invariance ($\A^1 \wequi *$) and $\P^1$-stability
($\P^1$ is an invertible object in the symmetric monoidal structure). Usually
each $\scr C(X)$ is also required to be stable; in this situation one may ask
that $\scr C$ should satisfy \emph{localization}: any decomposition $Z, U
\subset X$ into an open subset and closed complement should induce a
recollement. See \cite{triangulated-mixed-motives} for a careful statement. If
this holds, many further properties follow, and one says that $\scr C$ satisfies
the full six functors formalism.

The assignment $S \mapsto \SH(\Sm_{S,\et}^\hyp)$ defines a premotivic, stable
presentably symmetric monoidal category not satisfying any of the further assumptions. We
let $\SH^{S^1}_\et(S)$ be the $\A^1$-localization of $\SH(\Sm_{S,\et}^\hyp)$ and
$\SH_\et(S)$ the $\P^1$-stabilization. We recall the following fundamental
result.

\begin{theorem}[Ayoub] \label{thm:6-functors}
The premotivic categories $\SH_\et^{S^1}(\bullet), \SH_\et(\bullet)$ satisfy
localization. Hence $\SH_\et(\bullet)$ satisfies the full six functors formalism.
\end{theorem}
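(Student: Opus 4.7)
The plan is to reduce the statement to Ayoub's original theorem in the model-categorical setting from \cite{ayoub2014realisation}. Since the localization property is a statement about certain sequences being cofiber sequences and about certain functors being fully faithful, and our $\infty$-categorical presentation is a standard enhancement of Ayoub's model-categorical one, his theorem applies once we verify compatibility with hypercompletion. The final assertion about the full six functors formalism then follows from the general machinery of \cite{triangulated-mixed-motives}, which deduces the exceptional functors $f_!, f^!$ and their basic properties formally from localization together with the other premotivic axioms.

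To handle hypercompletion, I would check that the étale hypersheafification functor $L_\et^\hyp$ interacts correctly with the open/closed decomposition. For an open immersion $j$, the functor $j^*$ is restriction along an étale map and so commutes with hypersheafification essentially by definition, and $j_\#$ is its left adjoint at the level of presheaves. For a closed immersion $i$, the compatibility $L_\et^\hyp \circ i_* \wequi i_* \circ L_\et^\hyp$ is a form of proper base change for closed immersions, which holds integrally in the étale setting and whose hypercomplete upgrade is a routine consequence of the convergence of Postnikov towers in the relevant topoi (compare Corollary \ref{cor:proper-base-change}; for closed immersions the argument does not require $p$-completion since one has integral proper base change in the heart, together with finite cohomological dimension). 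Granting this, the localization cofiber sequence $j_\# j^* \to \id \to i_* i^*$, which is classical in $\SH^{S^1}_\Nis(\bullet)$ by Morel--Voevodsky, descends through $L_\et^\hyp$ to give localization for $\SH^{S^1}_\et(\bullet)$.

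The passage from $\SH^{S^1}_\et(\bullet)$ to $\SH_\et(\bullet)$ via $\P^1$-stabilization preserves localization by general principles: stabilizing with respect to a symmetric tensor-invertible object is computed as a filtered colimit of shifted copies of the original category, and this operation commutes with taking cofiber sequences and with the adjunctions $j_\# \dashv j^*$ and $i^* \dashv i_*$. Hence all four conditions (fully faithfulness of $i_*$ and $j_\#$, vanishing of $j^* i_*$, and the cofiber sequence) transport from $\SH^{S^1}_\et$ to $\SH_\et$.

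The main obstacle is the verification of the compatibility $L_\et^\hyp \circ i_* \wequi i_* \circ L_\et^\hyp$ in the hypercomplete setting, since everything else is either classical or formal. This compatibility is not entirely automatic because Ayoub works prior to hypercompletion, but it reduces by a Postnikov tower argument to integral proper base change for closed immersions on the hearts, which is standard. Once this step is in place, the theorem follows by directly invoking the argument of \cite{ayoub2014realisation} and the general formalism of \cite{triangulated-mixed-motives}.
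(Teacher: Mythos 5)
Your overall plan (reprove localization by descending the Morel--Voevodsky gluing theorem from the Nisnevich-local to the étale-hyperlocal setting, then stabilize) is a genuinely different route from the paper, which simply invokes Ayoub: the localization axiom for $\SH^{S^1}_\et(\bullet)$ and $\SH_\et(\bullet)$ is \cite[Corollaire 4.5.47]{ayoub2007six} (not \cite{ayoub2014realisation}, which is the realization paper), with the implicit topology $\tau$ allowed to be étale, and the six functors formalism then follows from Chapter 1 of that book (equivalently from the machinery of \cite{triangulated-mixed-motives}). Note moreover that the ``compatibility with hypercompletion'' you set out to verify is vacuous if one quotes Ayoub: his étale-local model structure has weak equivalences detected on homotopy sheaves, i.e.\ it presents exactly the hypercomplete localization $\SH(\Sm_{S,\et}^\hyp)$ used in this paper, so there is no gap between his setting and ours to bridge.

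If you insist on the descent-from-Nisnevich route, the key step you isolate --- $L^\hyp_\et \circ i_* \simeq i_* \circ L^\hyp_\et$ for a closed immersion $i$ --- is not established by your argument. The theorem is stated for an \emph{arbitrary} base scheme, with no étale-finiteness hypothesis; but your ``routine Postnikov tower argument'' needs convergence of Postnikov towers and bounded étale cohomological dimension in the big étale (hyper)topoi, which fail in this generality (compare Lemma \ref{lemm:big-etale-postnikov}, which requires every smooth $S$-scheme to be locally of finite étale cohomological dimension --- avoiding exactly such hypotheses here is the point of citing Ayoub). Likewise, Corollary \ref{cor:proper-base-change} is a $p$-complete statement about small étale sites under uniform boundedness assumptions, so it is not the ``integral proper base change'' input you need for the big smooth-étale sites. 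The correct unconditional argument does not involve cohomological dimension at all: étale-hyperlocal equivalences on $\Sm_{X,\et}$ are detected on homotopy sheaves, hence on strictly henselian stalks of smooth $X$-schemes, and $i_*$ preserves them because strict henselization commutes with closed immersions (every étale neighborhood of a geometric point of $V\times_X Z$ is refined by $V'\times_X Z$ for an étale neighborhood $V'$ of the corresponding point of $V$, by a standard limit/spreading-out argument). With that step repaired (or simply replaced by the citation to \cite{ayoub2007six}), the remaining assertions --- compatibility of $j_\#, j^*, i^*$ with the localization, and transport of localization through $\P^1$-stabilization --- are indeed standard.
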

\begin{proof}
The localization axiom is verified in \cite[Corollaire 4.5.47]{ayoub2007six}.
What is implicit here is that a topology $\tau$ has been fixed, which is allowed
to be the étale topology: see the beginning of Section 4.5 in the reference.
Localization together with the remaining standard properties implies the full
six functors formalism; see Chapter 1 of the reference.
\end{proof}

We also wish to treat the \emph{continuity} axiom: usually this asks that for certain
pro-schemes $X = \lim_i X_i$ and any $E \in \scr C(X_0)$ we have $[\1,
E_X]_{\scr C(X)}
\wequi \colim_i [\1, E_{X_i}]_{\scr C(X_i)}$.
We shall say that \emph{$p$-continuity} holds if \[ [\1,
E_X/p]_{\scr C(X)} \wequi \colim_i [\1, E_{X_i}/p]_{\scr C(X_i)}. \]

We begin with the following
abstract result. It is a spectral analog of a considerable weakening of
\cite[Lemma 1.1.12]{cisinski2013etale}.

\begin{lemma} \label{lemm:pullback-sheaves}
Let $I$ be an essentially small filtering category and $(\scr C_i)_{i \in I}$ a
system of sites with colimit $\scr C$. Let $\scr X_i = \Shv(\scr C_i)^\hyp$,
$\scr X = \Shv(\scr C)^\hyp$. Suppose given for each $i$ a generating family
$\scr G_i \subset \scr X_i$. Write
$f_i^*: \scr X_i \to \scr X$ for the pullback, and $\scr G$ for the canonical
generating family of $\scr X$. Assume the following:
\begin{enumerate}
\item For each $i \in I$, each $X \in \scr G_i$ is coherent. Each $X \in \scr G$
  is coherent.
\item For each $\alpha: i \to j \in I$, the functor $\alpha^*$ has a left
  adjoint $f_\#$ preserving coverings.
\item For each $\alpha: i \to j \in I$, $(i \to j)^* \scr G_j \subset \scr G_i$.
  For each $i$, $f_i^*(\scr G_i) \subset \scr G$.
\end{enumerate}
Let $X \in \scr G$, $X = \lim_i X_i$ for some family of objects $\{X_i \in \scr
G_i\}$. Let $F \in \SH(\scr X_0)$. Assume that one of the following
conditions holds:
\begin{enumerate}[(a)]
\item $F \in \SH(\scr X_0)_{\le N}$ for some $N$.
\item $\SH(\scr X), \SH(\scr X_i)$ are ($p$-)postnikov complete for all $i$
  and there is $N$ such that $X$ and each of the $X_i$ has ($p$-)cohomological
  dimension $\le N$.
\end{enumerate}
Then\NB{similar statement for sheaves of spaces instead of spectra should also
hold}
\[ (f_0^* F')(X) \wequi \colim_{i \to 0} F'((i \to 0)_\# X_i). \]
Here $F'=F$ in case (a) and $F'=F/p$ in case (b).
\end{lemma}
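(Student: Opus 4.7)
The plan is to reduce the statement to the case $F \in \SH(\scr X_0)^\heart$ and then to prove the heart case by producing an explicit presheaf model for $f_0^* F$ at coherent objects.

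For the reduction: both sides of the claimed equivalence are exact functors of $F$ that preserve cofibre sequences. In case (a), the fibre sequences $\ul\pi_n(F)[n] \to \tau_{\ge n} F \to \tau_{\ge n+1} F$ and descending induction on the truncation (which terminates since $F$ is bounded above) reduce us to $F$ concentrated in a single degree, hence, by shift invariance, to $F \in \SH(\scr X_0)^\heart$. In case (b), Postnikov convergence gives $F \simeq \lim_n \tau_{\le n} F$. Evaluation at the coherent object $X$ of cohomological dimension $\le N$ commutes with this limit by the Milnor-sequence argument of Lemma \ref{lemm:SH-compact-objects}(1), and similarly for evaluation at each $\alpha_\# X_i$, which has cohomological dimension $\le N$ uniformly in $\alpha\colon i \to 0$. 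The uniform bound further lets us commute the filtered colimit past the Postnikov limit on the right-hand side, so both sides reduce termwise to the case where $F$ is replaced by $\tau_{\le n} F$, which is bounded above; this puts us back in case (a).

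For the heart case, define a presheaf $G$ on the generating family $\scr G \subset \scr C$ by
\[ G(Y) := \colim_{i \to 0} F\bigl((i \to 0)_\# Y_i\bigr), \qquad Y = \lim_i Y_i, \ Y_i \in \scr G_i; \]
well-definedness uses assumption (3). By the universal property of the filtered colimit of sites $\scr C = \colim_i \scr C_i$, together with assumption (3), the presheaf underlying $f_0^* F$ agrees with $G$ on $\scr G$. It remains to show that $G$ already satisfies hyperdescent at every coherent $X \in \scr G$, so that no further sheafification is needed to compute $(f_0^* F)(X)$. Čech descent for a finite cover of $X$ follows from assumption (2) — which ensures that coverings in $\scr C_i$ are transported to coverings in $\scr C_0$, so the sheaf condition for $F$ at $(i \to 0)_\# X_i$ assembles compatibly — together with coherence of $X$: the sheaf condition is a finite limit, and finite limits commute with filtered colimits. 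Hyperdescent is then reduced to finitely many levels at a time via the truncation argument of Lemma \ref{lemm:SH-compact-objects}, applicable by coherence and either bounded-aboveness (case (a)) or uniformly bounded cohomological dimension (case (b)).

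The main obstacle will be the hyperdescent step for $G$ at coherent $X$. Čech descent is essentially formal from ``finite limits commute with filtered colimits'', but hyperdescent requires carefully tracking how the Postnikov truncations behave across the filtered system; this is precisely where the hypothesis on uniformly bounded cohomological dimension in (b) is used, to avoid an uncontrolled interchange of a filtered colimit with an infinite limit. The overall argument is a spectral enhancement of the classical continuity principle for étale cohomology \cite[Theorem VII.5.7]{sga4}.
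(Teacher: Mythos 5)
Your overall route is the same as the paper's: interpret the right-hand side as the presheaf-level pullback evaluated at $X$ and show that this presheaf already satisfies descent at coherent objects, using coherence to reduce the descent check to finitely many conditions and commuting finite limits with filtered colimits, with case (b) deduced from the truncated case via the cohomological dimension bound. However, two steps have genuine gaps. First, your reduction to the heart in case (a) rests on the claim that the Postnikov induction ``terminates since $F$ is bounded above''; this is false --- an object of $\SH(\scr X_0)_{\le N}$ can have nonzero homotopy sheaves in all degrees $\le N$, so the tower does not terminate. (The paper avoids this by never reducing to the heart: it treats truncated $F$ directly. If you insist on reducing further, you would have to write $F \wequi \colim_n \tau_{\ge -n} F$ and commute evaluation at the coherent objects with this filtered colimit of uniformly bounded-above objects, e.g. via Lemma \ref{lemm:SH--compact}, together with $t$-exactness of the pullbacks --- none of which you address, and none of which is actually needed.)

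Second, and more seriously, in the heart case you assert that ``the sheaf condition is a finite limit, and finite limits commute with filtered colimits.'' For sheaves of spectra this is not so: even for $F \in \SH(\scr X_0)^\heart$ the sections $F((i\to 0)_\# Y_i)$ are coconnective spectra whose negative homotopy groups are the cohomology of $\ul\pi_0 F$, and descent along a covering is a totalization of a cosimplicial diagram, not a finite limit; in the heart case the lemma is precisely continuity of cohomology in all degrees. The crux --- which your proposal never supplies --- is that for a uniformly truncated cosimplicial diagram the homotopy groups of the totalization are computed by finite partial totalizations and hence do commute with filtered colimits; this is exactly \cite[Proposition 1.2.4.5(5)]{lurie-ha}, the key citation in the paper's argument (applied after using coherence to reduce hyperdescent to the sheaf condition for single coverings and finite coproducts). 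Your appeal to ``the truncation argument of Lemma \ref{lemm:SH-compact-objects}'' does not fill this gap: that lemma truncates the coefficients of a mapping spectrum using a cohomological dimension bound (and is what handles case (b)), but it says nothing about interchanging a totalization with the filtered colimit over $i$. Your treatment of case (b) itself --- truncate both sides at level $N$ using the uniform bound and Lemma \ref{lemm:SH-compact-objects}(1), then invoke the truncated case --- does match the paper's.
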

\begin{proof}
We will put $f = f_0$, etc

(a) Consider $f^*_{pre}: \SH(\PSh(\scr C_0)) \to \SH(\PSh(\scr
C))$. It suffices to show that $f^*_{pre}$
preserves $N$-truncated (whence automatically hypercomplete) sheaves.
Since $F$ is $N$-truncated so is $f^*_{pre} F$ and we need to show it is a sheaf
(automatically hypercomplete). By the coherence assumption, this happens if (and
only if) $f^*_{pre} F$ (1) takes finite coproducts to products, and (2) satisfies
descent for morphisms of the form $f_j^*(Y_j \to X_j)$, where $Y_j \to X_j \in
\scr C_j$ is a covering. Since finite limits commute with
filtered colimits in spaces, condition (1) is clear. For condition (2), we are dealing
with a totalization (i.e. limit of a cosimplicial diagram) instead of a finite limit.
However, the homotopy groups of a
totalizations of an $N$-truncated diagram are determined by finite limits
\cite[Proposition 1.2.4.5(5)]{lurie-ha}, and hence do commute with filtered
colimits.

(b) Note that \[ [X, f^* F'] \wequi [X, (f^* F')_{\le N}] \wequi
[X, f^* (F'_{\le N})] \wequi \colim_i [X_i, F'_{\le N}] \wequi \colim_i [X_i, F'],
\] using both case (a) and Lemma \ref{lemm:SH-compact-objects}(1). This was to
be proved.
\end{proof}

In order to apply the above lemma, we need some preparations.

\begin{lemma}\label{lemm:easy-conservativity}
Let $\{f_\alpha: S_\alpha \to S\}_\alpha$ be an étale cover. The functors
$\{f_\alpha^*\}_\alpha$ form a conservative collection for $\SH(\Sm_{\bullet,
\et}^\hyp), \SH^{S^1}_\et(\bullet)$ and $\SH_\et(\bullet)$.
\end{lemma}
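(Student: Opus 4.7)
The plan is to reduce all three cases to étale hyperdescent for $\SH(\Sm_{\bullet,\et}^\hyp)$, which is essentially built into the definition. The argument should be largely formal, once one unpacks how pullback along an étale $f_\alpha: S_\alpha \to S$ acts on each of the three categories.

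First I would treat $\SH(\Sm_{\bullet,\et}^\hyp)$ directly. Since $f_\alpha$ is étale, its pullback $f_\alpha^*$ is just restriction along the fully faithful inclusion $\Sm_{S_\alpha} \hookrightarrow \Sm_S$ (composition with $f_\alpha$); equivalently, its left adjoint $f_{\alpha\#}$ sends $T/S_\alpha$ to $T/S$. Suppose $f_\alpha^* E \wequi 0$ for all $\alpha$; then $E$ vanishes on every object of $\bigcup_\alpha \Sm_{S_\alpha}$. For an arbitrary $W \in \Sm_S$, I would consider the base-changed étale cover $\{W \times_S S_\alpha \to W\}_\alpha$. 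Its Čech nerve consists of coproducts of iterated fibre products $W \times_S S_{\alpha_0} \times_S \cdots \times_S S_{\alpha_n}$, each of which is smooth over $S_{\alpha_0}$ and hence an object of $\Sm_{S_{\alpha_0}}$ on which $E$ vanishes. A spectral étale hypersheaf satisfies descent along the Čech nerve of an étale cover, so $E(W)$ is a totalization of zero spectra, whence $E(W) \wequi 0$. Thus $E \wequi 0$.

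Next, for $\SH^{S^1}_\et(\bullet)$, I would use that $\SH^{S^1}_\et(S)$ embeds fully faithfully in $\SH(\Sm_{S,\et}^\hyp)$ as the subcategory of $\A^1$-local objects, and that $f_\alpha^*$ preserves this subcategory (its left adjoint $f_{\alpha\#}$ sends the defining $\A^1$-equivalences to $\A^1$-equivalences, since $f_{\alpha\#}(\A^1 \times T) \iso \A^1 \times f_{\alpha\#}(T)$). Consequently the pullback on $\SH^{S^1}_\et$ coincides with the restriction of the pullback on $\SH(\Sm_{\bullet,\et}^\hyp)$ to $\A^1$-local objects, so conservativity transfers. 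For $\SH_\et(\bullet)$, I would invoke the description of $\SH_\et(S)$ via $\P^1$-stabilization of $\SH^{S^1}_\et(S)$: an object $E = (E_n)_n$ is zero iff each level $E_n \in \SH^{S^1}_\et(S)$ is zero, and the pullback acts levelwise, reducing to the previous case.

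The only non-formal input is étale hyperdescent, which holds by construction since $\SH(\Sm_{\bullet,\et}^\hyp)$ is by definition the category of spectral étale hypersheaves. Everything else is a consequence of the pre-motivic structure and the compatibility of pullback with the $\A^1$-localization and $\P^1$-stabilization functors. I do not anticipate a genuine obstacle here; the lemma's label ``easy-conservativity'' seems apt.
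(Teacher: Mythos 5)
Your argument is correct, and it rests on the same essential input as the paper's -- namely that étale covers have been inverted -- but it is packaged differently. The paper's proof is a one-liner dual to yours: since étale covers are inverted, the images of the left adjoints $f_{\alpha\#}$ are jointly dense (every $\Sigma^\infty_+ W$ is the colimit of the Čech nerve of the pulled-back cover, whose terms are $f_{\alpha\#}$ of objects over the $S_\alpha$), and joint density of left adjoints formally gives joint conservativity of the right adjoints $f_\alpha^*$; this argument applies verbatim and simultaneously to all three categories, because localization and $\P^1$-stabilization preserve colimits and $f_{\alpha\#}$ is compatible with them. You instead prove the statement directly on sections via Čech descent in $\SH(\Sm_{\bullet,\et}^\hyp)$ and then transfer it through $\A^1$-localization and stabilization, which forces you to verify the compatibilities case by case. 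Your hypersheaf and $\A^1$-local steps are fine as written. The only place where you assert something that genuinely needs a word of justification is the claim that $f_\alpha^*$ ``acts levelwise'' on $\P^1$-spectra: this is equivalent to $f_\alpha^*$ commuting with $\Omega_{\P^1} = \iHom(\P^1,-)$, which holds because $f_\alpha^*$ is étale, hence has the left adjoint $f_{\alpha\#}$ satisfying the projection formula, so $f_\alpha^*\iHom(\P^1,E) \wequi \iHom(\P^1_{S_\alpha}, f_\alpha^* E)$; without some such remark the levelwise description of the pullback of an $\Omega$-spectrum is not automatic. With that sentence added your proof is complete; the paper's formulation simply avoids these checks by working with the left adjoints throughout.
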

\begin{proof}
Since Čech nerves of étale covers have been inverted, the functors $\{f_{\alpha\#}\}_\alpha$
have jointly dense image. The result follows.
\end{proof}

Let $X \in \Sm_S$. We have a continuous map of sites $e_X: X_\et^\hyp \to
\Sm_{S,\et}^\hyp$ inducing $e_X^*: \PSh(X_\et) \adj \PSh(\Sm_{S,\et}^\hyp):
e_{X*}$.\NB{$e_X^*$ does not preserve finite products!}

\begin{lemma}\label{lemm:eX*-preserves-local-equiv}
The functor $e_{X*}$ preserves étale-hyperlocal equivalences (that is, those
maps of presheaves of spaces or spectra which become equivalences when taking
the associated hypercomplete sheaf).\NB{also true for
ordinary étale equivalences}
\end{lemma}
\begin{proof}
Since étale-hyperlocal equivalences can be tested on stalks, and any stalk of
$X_\et$ is also a stalk of $\Sm_{S,\et}$, the result follows.
\end{proof}

\begin{corollary} \label{cor:eX*-preserves-trunc}
The functor $e_{X*}: \Shv(\Sm_{S,\et}^\hyp) \to \Shv(X_\et^\hyp)$ preserves
colimits and truncations. Similarly for $e_{X*}: \SH(\Sm_{S,\et}^\hyp) \to
\SH(X_\et^\hyp)$.
\end{corollary}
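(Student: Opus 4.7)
The plan is to reduce both statements to the simple observation that at the presheaf level, the right adjoint $e_{X*}^{pre}: \PSh(\Sm_{S,\et}) \to \PSh(X_\et)$ is just restriction along the inclusion $X_\et \hookrightarrow \Sm_{S,\et}$. As restriction, it is simultaneously a left and a right adjoint at the presheaf level, so it automatically commutes with \emph{all} presheaf-level limits, colimits, and objectwise truncations.

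First I would use Lemma \ref{lemm:eX*-preserves-local-equiv} to show that $e_{X*}^{pre}$ sends hypersheaves on $\Sm_{S,\et}$ to hypersheaves on $X_\et$: a presheaf $F$ is a hypersheaf iff $F \to L_\et^\hyp F$ is an equivalence, and applying $e_{X*}^{pre}$ to this equivalence yields an equivalence (since it preserves arbitrary equivalences), whose target is a hypersheaf because the hypersheafification map is a hyperlocal equivalence and $e_{X*}^{pre}$ preserves such by the lemma. Consequently the sheaf-level functor $e_{X*}: \Shv(\Sm_{S,\et}^\hyp) \to \Shv(X_\et^\hyp)$ is literally the restriction of $e_{X*}^{pre}$ to hypersheaves.

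With this identification in hand, both claims become formal. Colimits in $\Shv(\Sm_{S,\et}^\hyp)$ are computed as hypersheafifications of presheaf colimits; truncations are computed as hypersheafifications of objectwise truncations. In each case the hypersheafification unit is a hyperlocal equivalence, hence preserved by $e_{X*}^{pre}$, and $e_{X*}^{pre}$ already commutes with the presheaf-level colimit or truncation. Chaining these, one obtains that $e_{X*}(\colim^\Shv F_i)$ and $e_{X*}(\tau_{\le n} F)$ are the hypersheafifications of $\colim^\PSh e_{X*} F_i$ and $\tau_{\le n}^\PSh e_{X*} F$ respectively, which by definition are the sheaf-level colimit and truncation.

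I do not expect a real obstacle: everything reduces to the previously established Lemma \ref{lemm:eX*-preserves-local-equiv} together with the standard yoga of reflective localizations. The only mild pitfall is keeping the presheaf-level and sheaf-level right adjoints straight, which is exactly what the preceding lemma settles.
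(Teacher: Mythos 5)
Your argument is essentially the paper's proof: colimits and truncations in the hypercomplete sheaf categories are computed by performing the presheaf-level operation and then hypersheafifying, the presheaf-level $e_{X*}$ (restriction along $X_\et \subset \Sm_{S,\et}$) commutes with presheaf colimits and objectwise truncations, and Lemma \ref{lemm:eX*-preserves-local-equiv} lets you transport the hypersheafification units. One small repair: your justification that $e_{X*}^{pre}$ sends hypersheaves to hypersheaves is a non sequitur --- knowing that $e_{X*}^{pre}$ preserves hyperlocal \emph{equivalences} does not by itself imply that it preserves hyperlocal \emph{objects}, and in your sentence the hypersheaf property of $e_{X*}^{pre}L^\hyp F$ is exactly what needs proof. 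The fact itself is immediate for a different reason: any (hyper)cover of an object $U \in X_\et$ formed in $\Sm_{S,\et}$ consists of étale $U$-schemes and hence already lies in $X_\et$, so the restriction of a hypersheaf satisfies hyperdescent on $X_\et$ (alternatively, check on homotopy sheaves at stalks as in Lemma \ref{lemm:eX*-preserves-local-equiv}); with that substituted, your proof goes through and agrees with the one in the paper.
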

\begin{proof}
Colimits of sheaves are computed as colimit of presheaves (i.e. sectionwise)
followed by
hypersheafification. Since the presheaf version of $e_{X*}$ preserves colimits,
Lemma \ref{lemm:eX*-preserves-local-equiv} implies that the sheaf version also
does. The claim about truncations follows by the same argument.
\end{proof}

\begin{lemma} \label{lemm:big-etale-postnikov}
Suppose that every smooth $S$-scheme is étale locally of uniformly bounded
($p$-)étale cohomological
dimension. Then ($p$-)postnikov towers converge in $\SH(\Sm_{S, \et}^\hyp)$, and all
the objects $\Sigma^\infty X_+$ for $X \in \Sm_S$ with $\cd(X) < \infty$
are compact.
If $\cd_p X < \infty$ then $\Sigma^\infty X_+/p$ is compact both in $\SH(\Sm_{S,
\et}^\hyp)$ and in $\SH(\Sm_{S, \et}^\hyp)_p^\comp$.
\end{lemma}
\begin{proof}
The collection of functors $e_{X*}$ for various $X \in \Sm_S$ is clearly
conservative and commutes with limits, and also with truncations by Corollary
\ref{cor:eX*-preserves-trunc}. It is hence enough to show that
$\SH(X_\et^\hyp)$ ($p$-)postnikov complete, objects of finite cohomological
dimension are compact in $\SH(X_\et^\hyp)$, and that $e_{X*}$ preserves filtered colimits.
The first two statements are proved in Lemma \ref{lemm:uniformly-bounded-compact-generation},
and the last one is Corollary \ref{cor:eX*-preserves-trunc}.
\end{proof}

\begin{corollary} \label{cor:SH-et-compactly-gen}
Under the assumptions of Lemma \ref{lemm:big-etale-postnikov}, $\SH_\et(S)$
(respectively $\SH_\et(S)_p^\comp$) is
compactly generated by $\Sigma^\infty X_+ \wedge \Gmp{n}$ (respectively
$\Sigma^\infty X_+ \wedge \Gmp{n}/p$) for $n \in \Z$ and $X
\in \Sm_S$ with $\cd(X) < \infty$ (respectively $\cd_p(X) < \infty$).
A similar statement holds for $\SH^{S^1}$.
\end{corollary}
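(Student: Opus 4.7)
The plan is to proceed in three steps, passing from $\SH(\Sm_{S,\et}^\hyp)$ to its $\A^1$-localization $\SH^{S^1}_\et(S)$ and then to its $\P^1$-stabilization $\SH_\et(S)$.

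First, Lemma \ref{lemm:big-etale-postnikov} directly gives compactness of $\Sigma^\infty X_+$ for $X \in \Sm_S$ with $\cd(X) < \infty$. To see these objects generate $\SH(\Sm_{S,\et}^\hyp)$, I would note that an arbitrary $Y \in \Sm_S$ admits, by the standing hypothesis, an étale cover by finite-cd schemes $Y_\alpha$; étale hyperdescent in $\Sm_{S,\et}^\hyp$ then recovers $\Sigma^\infty Y_+$ as the geometric realization of the Čech nerve of this cover, whose terms lie in the localizing subcategory generated by the proposed family. This gives compact generation at the unstable level.

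Next, for $\SH^{S^1}_\et(S) = L_{\A^1}\SH(\Sm_{S,\et}^\hyp)$, I would invoke the standard principle that a Bousfield localization of a compactly generated stable $\infty$-category at a set of maps between compact objects preserves compact generation, with new generators given by the images of the old ones. The $\A^1$-equivalences are the strongly saturated class generated by $\Sigma^\infty(X \times \A^1)_+ \to \Sigma^\infty X_+$ for $X \in \Sm_S$. Since $X \times \A^1$ is itself a smooth $S$-scheme, the hypothesis applies to it as well, so both sides of each generating acyclic can be refined, via the same Čech argument as in step one, into an iterated colimit of maps between finite-cd representables. This exhibits the $\A^1$-equivalences as generated by maps between compact objects, and hence yields compact generation of $\SH^{S^1}_\et(S)$ by the $\Sigma^\infty X_+$ with $\cd(X) < \infty$ — this is the $\SH^{S^1}$-version of the statement.

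Finally, $\SH_\et(S)$ is obtained from $\SH^{S^1}_\et(S)$ by $\otimes$-inverting $\Gm$ (equivalently $\P^1$, since $\P^1 \wequi S^1 \wedge \Gm$ after $\A^1$-localization and pointing). Using the standard presentation of symmetric monoidal $\otimes$-inversion as a sequential colimit under $\wedge\Gm$ — which applies because the cyclic permutation on $\Gm^{\wedge 3}$ is $\A^1$-homotopic to the identity — the compact generators of $\SH_\et(S)$ are of the form $\Sigma^\infty X_+ \wedge \Gmp{n}$ for $n \in \Z$ (negative powers arise from the colimit, and $\Gm$ becomes invertible so all $n \in \Z$ are available) and $X \in \Sm_S$ with $\cd(X) < \infty$.

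I expect the main obstacle to be the second step: cleanly verifying that the $\A^1$-equivalences admit a generating set between compact objects, which is where the full force of the hypothesis — applied to $X \times \A^1$ and not merely to $X$ — is essential.
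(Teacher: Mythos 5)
Your proof is correct and follows essentially the same route as the paper: compactness of the $\Sigma^\infty X_+$ comes from Lemma \ref{lemm:big-etale-postnikov}, and compact generation is then transported through the $\A^1$-localization (a localization at maps between compact objects, after the Čech refinement you describe) and through the $\otimes$-inversion of the symmetric object $\Gm$ (resp.\ $\P^1$), which is exactly the citation \cite[Proof of Lemma 4.1]{bachmann-norms} in the paper. You merely spell out details (unstable generation by finite-cd representables, refinement of the generating $\A^1$-acyclics) that the paper's two-line proof leaves implicit.
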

\begin{proof}
Compact generators are preserved under stabilization with respect to a symmetric
object \cite[Proof of Lemma 4.1]{bachmann-norms} and
$\A^1$-localization (or more generally any localization at a family of maps
between compact objects). The same proof works for $\SH^{S^1}$.
\end{proof}

\begin{definition} \label{def:etale-finite}
We say that a scheme $S$ is \emph{($p$-)étale finite} if every finite type $S$-scheme is of
uniformly bounded ($p$-)étale cohomological dimension. We call $S$ \emph{locally
($p$-)étale finite} if there exists an étale cover $\{S_\alpha \to S\}_\alpha$ with each
$S_\alpha$ ($p$-)étale finite.
\end{definition}

\begin{example} \label{ex:etale-finite}
$S$ is ($p$-)étale finite whenever Corollary
\ref{cor:uniformly-finite-etale-dimension-example} applies. In particular all
the schemes from Example \ref{ex:uniformly-bounded-dim} are (locally) étale
finite.
\end{example}

\begin{proposition} \label{prop:continuity}
Let $S_0$ be qcqs and ($p$-)étale finite, and $\{S_i\}_i$
a pro-scheme with each $S_i$ étale and affine over $S_0$. Then
$\SH(\Sm_{\bullet, \et}^\hyp), \SH^{S^1}_\et(\bullet)$ and $\SH_\et(\bullet)$
satisfy ($p$-)continuity for the pro-system $\{S_i\}_i$.
\end{proposition}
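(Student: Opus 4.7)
The plan is to apply Lemma \ref{lemm:pullback-sheaves} to the big étale sites $\Sm_{S_i,\et}$ to establish continuity for $\SH(\Sm_{\bullet,\et}^\hyp)$, and to deduce the two localized cases by comparing with the underlying unlocalized category.

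I would set up the lemma with $\scr C_i = \Sm_{S_i,\et}$ and colimit site $\scr C = \Sm_{S,\et}$ (every finitely presented smooth $S$-scheme descends to one over some $S_{i_0}$ by standard approximation, using that the transitions are affine), taking $\scr G_i,\scr G$ to be the qcqs (equivalently finitely presented) smooth schemes. The transition maps $\alpha\colon S_j \to S_i$ are étale by cancellation from $S_i, S_j \to S_0$ being étale, so pullback on the big étale sites has left adjoints $\alpha_\#$ (post-composition) preserving coverings, verifying hypothesis~(2). Hypothesis~(1) is coherence of qcqs objects in a finitary site, and hypothesis~(3) is stability of qcqs smooth schemes under étale base change. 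Every qcqs $X \in \Sm_S$ admits a presentation $X \wequi \lim_{j \to i} X_j$ with $X_j \in \Sm_{S_j,\et}$ qcqs, for some starting level $i$.

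To invoke the unbounded case (b) of Lemma \ref{lemm:pullback-sheaves}, I would show that $S$ and each $S_i$ are étale finite. Each $S_i \to S_0$ is étale affine with $S_0$ étale finite, so $S_i$ is étale finite; for $S = \lim_i S_i$, any finite type $Y/S$ descends to a finite type $Y_{i_0}/S_{i_0}$, and then $Y \wequi \lim_{i \ge i_0} Y_{i_0} \times_{S_{i_0}} S_i$ is a pro-étale-affine limit of étale-finite schemes, so Lemma \ref{lemm:uniform-dim-permanence} gives the uniform cohomological dimension bound needed. By Lemma \ref{lemm:big-etale-postnikov}, postnikov towers then converge in $\Shv(\Sm_{S,\et}^\hyp)$ and in each $\Shv(\Sm_{S_i,\et}^\hyp)$, and $S$ and $S_i$ themselves have bounded étale cohomological dimension. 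Applying the lemma to $X = S$, $X_i = S_i$ yields $(f_0^* E)(S) \wequi \colim_i E(S_i/S_0)$; rewriting both sides via $\1 = \Sigma^\infty_+ S$ and the adjunctions $f_{i\#} \dashv f_i^*$ gives the desired continuity statement $[\1, E_S] \wequi \colim_i [\1, E_{S_i}]$ in $\SH(\Sm_{\bullet,\et}^\hyp)$.

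For $\SH^{S^1}_\et(\bullet)$ and $\SH_\et(\bullet)$, pullback commutes with $\A^1$-localization and with $\P^1$-stabilization (since $f^*$ preserves the relevant equivalences and $f_*$ preserves $\A^1$-local objects and $\Omega_{\P^1}$-spectra), and the unit is preserved. Consequently the groups $[\1, E_{S_i}]$ in the localized categories can be computed in the underlying $\SH(\Sm_{S_i,\et}^\hyp)$ applied to the underlying sheaf of $E$ (respectively the $0$-space of its $\P^1$-spectrum), so the continuity identity transfers from the unlocalized case. The main obstacle is confirming that étale finiteness propagates from $S_0$ to the limit $S$ and to all finite-type schemes involved; once that is in place via Lemma \ref{lemm:uniform-dim-permanence}, the statement reduces to a direct appeal to Lemma \ref{lemm:pullback-sheaves}, which has been set up precisely for this purpose.
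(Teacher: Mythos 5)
Your treatment of the unlocalized case follows the paper's own route and is sound: you verify the hypotheses of Lemma \ref{lemm:pullback-sheaves}, get étale finiteness of $S$ and of the $S_i$ from Lemma \ref{lemm:uniform-dim-permanence}, convergence of postnikov towers from Lemma \ref{lemm:big-etale-postnikov}, and then case (b) of the lemma gives $(f^*E)(S) \wequi \colim_i E(S_i)$, i.e.\ continuity for $\SH(\Sm_{\bullet,\et}^\hyp)$.

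The gap is in your last paragraph, where you pass to $\SH^{S^1}_\et(\bullet)$ and $\SH_\et(\bullet)$. The facts you invoke --- that $f^*$ preserves the relevant equivalences and that $f_*$ preserves $\A^1$-local objects and $\Omega_{\P^1}$-spectra --- are true for \emph{any} morphism of base schemes and only show that the adjunction descends to the localized/stabilized categories; they cannot by themselves imply continuity (which is false without hypotheses). What is actually needed is that the \emph{unlocalized} pullback $f^*\colon \SH(\Sm_{S_0,\et}^\hyp) \to \SH(\Sm_{S,\et}^\hyp)$ sends $\A^1$-local objects to $\A^1$-local objects, and then $\Gm$-$\Omega$-spectra to $\Gm$-$\Omega$-spectra: for $E \in \SH^{S^1}_\et(S_0)$ one has $E_S = L_{\A^1}(f^*E)$, so $[\1, E_S]_{\SH^{S^1}_\et(S)} = [\1, L_{\A^1} f^*E]$, and this is only identified with $(f^*E)(S) \wequi \colim_i E(S_i)$ once one knows that $f^*E$ is already $\A^1$-local. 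Since $f\colon S \to S_0$ is pro-étale but not smooth, there is no $f_\#$, so this preservation is not formal. The repair is available in your own setup: apply the colimit formula of Lemma \ref{lemm:pullback-sheaves} not just to $X = S$ but to every quasi-separated qcqs $X \in \Sm_{S_0}$ (you prepared the descent of such $X$ but then used only $X = S$). Since these objects generate, the formula $(f^*E)(f^*X) \wequi \colim_i E(X_i)$, tested against $X$ and $\A^1_X$, shows that $f^*E$ is $\A^1$-local whenever $E$ is, giving continuity for $\SH^{S^1}_\et$; and because filtered colimits of spectra commute with finite limits, the same formula shows that $f^*$ preserves $\Gm$-$\Omega$-spectra, giving continuity for $\SH_\et$. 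This is exactly how the paper completes the argument.
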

\begin{proof}
By Lemma \ref{lemm:uniform-dim-permanence}, $S$ is ($p$-)étale finite.
We wish to apply Lemma \ref{lemm:pullback-sheaves}(b) with
$\scr C_i = \Sm_{S_i, \et}$, $X_0$ some smooth, quasi-separated $S_0$-scheme.
We can do this by Lemma
\ref{lemm:big-etale-postnikov} (which says that the required postnikov towers
converge) and the definition of ($p$-)étale finiteness (which ensures that the
$X_i$ have bounded ($p$-)étale cohomological dimension). We conclude that if $f: S \to
S_0$ is the projection, then for $E \in \SH(\Sm_{S_0,\et}^\hyp)$ and $X$ smooth
and quasi-separated, we have $(f^* E')(f^* X) \wequi \colim_i E'(X_i)$. (Here
$E' = E$ or $E'=E/p$, as appropriate.) In particular,
$\SH(\Sm_{\bullet, \et}^\hyp)$ satisfies ($p$-)continuity for this system (take
$X_0 = S_0$).

Since smooth quasi-separated schemes generate our categories, we conclude also that
$f^*: \SH(\Sm_{S_0, \et}^\hyp) \to \SH(\Sm_{S, \et}^\hyp)$ preserves
$\A^1$-local objects. This implies continuity for $\SH^{S^1}_\et$. Since
(filtered) colimits of spectra commute with finite limits,
the functor $f^*: \SH^{S^1}_\et(S_0) \to \SH^{S^1}_\et(S)$ preserves
$\Gm$-$\Omega$-spectra. This implies continuity for $\SH_\et(\bullet)$.
\end{proof}

\begin{corollary} \label{cor:intermediate-conservativity}
Let $S$ be locally ($p$-)étale finite. Then the
family of functors $i_{S_{\bar x}}^*: F(S) \to F(S_{\bar x})$ is conservative,
where $F$ is one of $\SH(\Sm_{\bullet, \et}^\hyp), \SH^{S^1}_\et(\bullet)$ or
$\SH_\et(\bullet)$ (respectively their $p$-completions),
$\bar x$ runs through geometric points of $S$ and $S_{\bar x}$ denotes the strict henselization.
\end{corollary}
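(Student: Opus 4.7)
The plan is to reduce to the base case $F = \SH(\Sm_{\bullet, \et}^\hyp)$ on an étale finite $S$, and to argue there by directly computing geometric stalks. First, by Lemma \ref{lemm:easy-conservativity} the conservativity statement passes along étale covers, and the strict henselization of $S$ at $\bar x$ agrees with that of $S_\alpha$ at any lift of $\bar x$ whenever $S_\alpha \to S$ is étale. Combined with local étale finiteness this lets me assume that $S$ (and, by Lemma \ref{lemm:uniform-dim-permanence}, also each $S_{\bar x}$) is étale finite. Under this assumption, Postnikov towers converge in $\Shv(\Sm_{S, \et}^\hyp)$ (Lemma \ref{lemm:big-etale-postnikov}), so that the $t$-structure on $\SH(\Sm_{S, \et}^\hyp)$ is left complete and objects are detected by their homotopy sheaves.

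For the base case $F = \SH(\Sm_{\bullet, \et}^\hyp)$: an étale hypersheaf of abelian groups on $\Sm_{S, \et}$ vanishes iff all its stalks at geometric points $(U, \bar y)$, with $U \in \Sm_S$ and $\bar y$ a geometric point of $U$, do. Each such $\bar y$ projects to a geometric point $\bar x$ of $S$, and the strict henselization $U_{\bar y}$ depends only on the pair $(U, \bar y)$---in particular it agrees with the strict henselization at $\bar y$ of $U \times_S S_{\bar x} \in \Sm_{S_{\bar x}}$. This identifies $(\ul{\pi}_n E)_{\bar y}$ with the corresponding stalk of $i^*_{S_{\bar x}} E$, so the hypothesis that $i^*_{S_{\bar x}} E \simeq 0$ for every $\bar x$ forces every stalk of $E$ to vanish and hence $E \simeq 0$.

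For $F = \SH^{S^1}_\et(\bullet)$ and $F = \SH_\et(\bullet)$: these are built from $\SH(\Sm_{\bullet, \et}^\hyp)$ by inverting $\A^1$ and then stabilizing by $\P^1$. Since $\A^1$ and $\P^1$ are stable under base change, the premotivic pullback $i^*_{S_{\bar x}}$ commutes with $L_{\A^1}$ and acts level-wise on $\P^1$-spectra. Hence any vanishing of $i^*_{S_{\bar x}} E$ in the localized category lifts to a vanishing (level-wise in the $\SH_\et$ case) in $\SH(\Sm_{S_{\bar x}, \et}^\hyp)$, at which point the previous step concludes.

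The main delicate point is the stalk identification in the second paragraph: it requires that the premotivic pullback along the pro-étale morphism $S_{\bar x} \to S$ interacts with the étale topology as expected, which ultimately reduces to the well-known invariance of strict henselizations under étale (and pro-étale) base change. An alternative route would use compact generation (Corollary \ref{cor:SH-et-compactly-gen}) together with Proposition \ref{prop:continuity} to show that every $\alpha \in [G, E[n]]$ is killed on an étale cover of $S$; but then one would still need a descent argument to deduce $\alpha = 0$, which is most cleanly done via the stalk description above.
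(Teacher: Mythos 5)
Your reduction to étale finite $S$ and your stalk-detection principle for $\SH(\Sm_{S,\et}^\hyp)$ are fine, but the two steps you treat as formal are exactly where the content lies, and one of them fails as stated. First, the identification of the stalk of $E$ at $(U,\bar y)$ with the stalk of $i_{S_{\bar x}}^*E$ does \emph{not} reduce to invariance of strict henselizations: that only matches up the pro-systems of étale neighborhoods. What you actually need is to compute sections of the pulled-back \emph{hypersheaf} $i_{S_{\bar x}}^*E$ over those neighborhoods in terms of sections of $E$, i.e.\ a continuity statement for the premotivic pullback along the pro-étale map $S_{\bar x}\to S$. This is precisely Proposition \ref{prop:continuity} (via Lemma \ref{lemm:pullback-sheaves}), and it is where the étale finiteness hypothesis (bounded cohomological dimension, convergence of Postnikov towers, compactness) enters; alternatively one can argue that hypersheafification does not change stalks and that the presheaf-level pullback is continuous on finitely presented objects, but none of this is in your write-up, and it is the heart of the matter rather than a ``delicate point'' about henselizations. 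This is also how the paper proceeds: it shows $E_Y\wequi 0$ for $Y=\lim_i Y_i$ the strict henselization (since $Y\to S$ factors through $S_{\bar x}$) and then invokes continuity for the tower $\{Y_i\}$ to get $\colim_i E(Y_i)\wequi E_Y(\1)\wequi 0$.

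Second, and more seriously, your passage from the base case to $\SH^{S^1}_\et$ and $\SH_\et$ is wrong as written. The premotivic pullback on these categories is ``pull back, then re-localize (and re-stabilize)'', so the hypothesis $i_{S_{\bar x}}^*E\wequi 0$ only says that $L_{\A^1}(f^*E)$ (resp.\ its $\P^1$-stabilization) vanishes; it does not ``lift to a vanishing, level-wise, in $\SH(\Sm_{S_{\bar x},\et}^\hyp)$''. Stalks are not invariant under $\A^1$-localization, so you cannot feed this into the base case unless you know that $f^*$ already preserves $\A^1$-local objects and $\Gm$-$\Omega$-spectra. That preservation is again exactly the output of Proposition \ref{prop:continuity} (and is false-looking without some finiteness, since étale hypersheafification need not preserve $\A^1$-invariance in general). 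So either you invoke continuity, as the paper does — in which case the uniform argument of the paper (compute the stalk as $\colim_i E(Y_i)\wequi E_Y(\1)$ directly in $F(S)$, simultaneously for all three $F$) is both shorter and covers the localized/stabilized cases for free — or you must supply a genuinely new argument for why pullback along $S_{\bar x}\to S$ is compatible with the $\A^1$- and $\P^1$-constructions; at present your proposal contains neither.
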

\begin{proof}
Let $\{S_\alpha\}$ be an étale covering by schemes which are qcqs and ($p$-)étale
finite. Any geometric point of some $S_\alpha$ is also a
geometric point of $S$, hence by Lemma \ref{lemm:easy-conservativity} we may
assume $S$ qcqs and ($p$-)étale finite.  Let $\{Y_i\}$ be a pro-system
representing a geometric point $\bar y$ of $Y \in \Sm_S$ and $E \in F(S)$ with
$i_{S_{\bar x}}^*(E)
\wequi 0$ for all $\bar x$. Put $E' = E/p$ if we are working at a prime $p$, or
$E' = E$ else. We shall show that (*) $\colim_i E'(Y_i) \wequi 0$. Since
this holds for all stalks, we conclude that $0 \wequi \Omega^\infty E' \in
\PSh(\Sm_S)$. Since this also applies to all shifts (and twists if $F =
\SH_\et$) of $E'$, the result follows.

It remains to prove (*). Let $Y = \lim Y_i$. Let $\bar x$ be the geometric point
of $S$ under $\bar y$. Then $Y \to S$ factors through $S_{\bar x} \to S$, hence
$E'_Y \wequi 0$. Since $Y$ is a henselization, we may assume that $Y_0$ (and in
fact each $Y_i$) is qcqs. Since also $Y_0$ (and in fact each $Y_i$) is étale finite,
Proposition \ref{prop:continuity} applies: we have ($p$-)continity for $\{Y_i\}_i$. Hence $0
\wequi E'_Y(\1) \wequi \colim_i E'(Y_i)$. The result follows.
\end{proof}

\begin{corollary} \label{cor:conservativity}
Let $S$ be locally étale finite and of finite dimension. Then the
family of functors $i_{\bar x}^*: \SH_\et(S) \to \SH_\et(\bar{x})$ is
conservative, where $\bar x$ runs through geometric points of $S$. The same is
true for $\SH^{S^1}$.
If instead $S$ is locally $p$-étale finite of finite dimension, then the same
conclusions hold for the $p$-complete categories.
\end{corollary}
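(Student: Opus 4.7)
The plan is to reduce to Corollary \ref{cor:intermediate-conservativity} by showing, for each geometric point $\bar x$ of $S$, that the pullback $i^*: \SH_\et(S_{\bar x}) \to \SH_\et(\bar x)$ along the inclusion of the closed geometric point into the strict henselization is conservative; composing with the conservativity already provided by that corollary then gives the result over $S$. The same reasoning will apply verbatim to $\SH^{S^1}_\et$ since it also satisfies localization by Theorem \ref{thm:6-functors}.

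Set $X := S_{\bar x}$. By Lemma \ref{lemm:uniform-dim-permanence}, $X$ is locally étale finite, and $\dim X \le \dim S < \infty$. I would proceed by induction on $d = \dim X$. For the base case $d = 0$, the scheme $X$ is zero-dimensional local with unique topological point $\bar x$, so the open complement of $\bar x \hookrightarrow X$ is empty. The localization recollement of Theorem \ref{thm:6-functors} attached to $\bar x \hookrightarrow X \hookleftarrow \emptyset$, together with $\SH_\et(\emptyset) \wequi 0$, forces $i^*$ to be an equivalence.

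For the inductive step $d > 0$, let $F \in \SH_\et(X)$ satisfy $i_{\bar y}^* F \wequi 0$ for every geometric point $\bar y$ of $X$; this hypothesis transfers from the one over $S$ because any geometric point of $X$ composes with $X \to S$ to a geometric point of $S$. Write $j: U := X \setminus \{\bar x\} \to X$ for the open complement. From $i_{\bar x}^* F \wequi 0$ and the recollement, we get $F \wequi j_! j^* F$, so it suffices to show $j^* F \wequi 0$. Now $U$ is locally étale finite and of finite dimension, so Corollary \ref{cor:intermediate-conservativity} reduces this to showing $(i_{U_{\bar y}}^U)^* j^* F \wequi 0$ for each geometric point $\bar y$ of $U$. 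Since $U \subset X$ is open we have $U_{\bar y} \wequi X_{\bar y}$, and $\dim X_{\bar y} < d$ because $\bar y$ corresponds to a non-maximal prime in the local ring of $X$: any chain of primes ending at such a prime can be properly extended by appending the maximal ideal. The inductive hypothesis applied to $X_{\bar y}$ now reduces the claim further to showing $i_{\bar z}^*(j^* F) = i_{\bar z}^* F \wequi 0$ for geometric points $\bar z$ of $X_{\bar y}$, and these compose via $X_{\bar y} \to X$ to geometric points of $X$, so the hypothesis applies.

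The main obstacle to anticipate is the base case: it works cleanly only because the recollement degenerates when the topological complement is empty, avoiding any direct invocation of nil-invariance for $\SH_\et$. The rest is bookkeeping with strict henselizations, plus the observation that a non-maximal prime in a noetherian local ring has strictly smaller height than the maximal ideal, which is what drives the induction.
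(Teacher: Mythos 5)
Your argument is essentially the paper's own proof: induction on dimension, the localization recollement (Theorem \ref{thm:6-functors}) at the closed point of a strict henselization, and Corollary \ref{cor:intermediate-conservativity} to pass between a scheme and its strict henselizations; the only cosmetic difference is that you induct on the dimension of the strictly henselian local base and invoke Corollary \ref{cor:intermediate-conservativity} a second time on the punctured henselization, whereas the paper inducts on $\dim S$ and applies its inductive hypothesis directly to $S_{\bar y}\setminus \bar y$. One correction to your opening sentence: the single functor $i^*:\SH_\et(S_{\bar x})\to\SH_\et(\bar x)$ is \emph{not} conservative in general (it kills $j_!$ of any nonzero object on the punctured henselization); what your induction actually establishes, and what is needed, is conservativity of the \emph{family} of pullbacks to all geometric points of $S_{\bar x}$, and your inductive step correctly uses that stronger hypothesis, so the body of the proof is sound as written.
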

\begin{proof}
We prove this by induction on the dimension of $S$. Let $E=E' \in \SH_\et(S)$
(respectively $E \in \SH_\et(S)_p^\comp$, $E'=E/p$), with
$E_{\bar x} \wequi 0$ for all geometric points $\bar x$ of $S$. Let $\bar y$ be a geometric
point of $S$; by Lemma \ref{lemm:uniform-dim-permanence} $S_{\bar y}$ is ($p$-)étale
finite. The result thus holds for
$U_{\bar y} := S_{\bar y} \setminus \bar y$, by induction. Hence $E'_{\bar y} \wequi 0$ (by
assumption) and $E'_{U_{\bar y}} \wequi 0$. By localization (Theorem
\ref{thm:6-functors}), $E'_{S_{\bar y}} \wequi 0$. Since $\bar y$ was arbitrary,
the result follows from Corollary \ref{cor:intermediate-conservativity}. The
proof for $\SH^{S^1}$ is the same.
\end{proof}

\section{Main results}
\label{sec:main}

\begin{lemma} \label{lemm:eX-ff}
Let $S$ be a scheme. The functor $e^*: \Shv(S_\et^\hyp) \to
\Shv(\Sm_{S,\et}^\hyp)$ is fully faithful. Similarly for $\SH(S_\et^\hyp) \to
\SH(\Sm_{S,\et}^\hyp)$
\end{lemma}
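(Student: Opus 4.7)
The plan is to verify that the unit $F \to e_* e^* F$ of the adjunction $e^* \dashv e_*$ is an equivalence for every $F \in \Shv(S_\et^\hyp)$. Here $e_*$ is restriction of sheaves along the inclusion of sites $\iota \colon S_\et \hookrightarrow \Sm_{S,\et}$. A useful first observation is that $\iota$ is cocontinuous: if $U \in S_\et$ and $\{Y_i \to U\}$ is an étale cover in $\Sm_{S,\et}$, then each $Y_i$ is étale over $U$ which is étale over $S$, hence $Y_i \in S_\et$, so the cover already lies in the small site. This makes $e_*$ a left adjoint as well (to a further right adjoint), so it preserves all colimits; the same holds after restriction to hypersheaves since restriction commutes with hypercompletion on both sides.

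Next I check the unit on the representable generators $y^s_U$ for $U \in S_\et$. A direct adjunction argument gives $e^* y^s_U \wequi y^b_U$, the analogous representable in the big topos: for any $G \in \Shv(\Sm_{S,\et}^\hyp)$,
\[ \Map(e^* y^s_U, G) \wequi \Map(y^s_U, e_* G) = G(U) \wequi \Map(y^b_U, G). \]
On the other hand, for $V \in S_\et$, $(e_* y^b_U)(V) = y^b_U(V) = \Hom_{\Sm_S}(V, U) = \Hom_{S_\et}(V, U) = y^s_U(V)$, so $e_* e^* y^s_U \wequi y^s_U$ and the unit is the identity on each generator.

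Since both $\id$ and $e_* e^*$ preserve colimits on $\Shv(S_\et^\hyp)$, and this $\infty$-topos is generated under colimits by the family $\{y^s_U\}_{U \in S_\et}$, the full subcategory on which the unit is an equivalence contains the generators and is closed under colimits, hence is everything. This proves fully faithfulness in the unstable case.

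The spectral case follows formally: $e^*$ is left exact, so the adjunction lifts levelwise to an adjunction $e^* \dashv e_*$ between $\SH(S_\et^\hyp)$ and $\SH(\Sm_{S,\et}^\hyp)$; a levelwise unit equivalence on pointed sheaves yields an equivalence on spectrum objects. Alternatively, one can run the same colimit-of-generators argument with the shifts of $\Sigma^\infty_+ y^s_U$. The only potentially delicate point throughout is the cocontinuity/hypercompletion compatibility, and that is made trivial by the fact that étale-local data on an étale $S$-scheme is already étale-local over $S$.
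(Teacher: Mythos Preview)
Your proof is correct and follows essentially the same strategy as the paper: verify $e_* e^* \simeq \id$ on representables and extend by colimits, using that $e_*$ preserves colimits. The only minor difference is that you justify colimit-preservation of $e_*$ via cocontinuity of the inclusion $S_\et \hookrightarrow \Sm_{S,\et}$, whereas the paper invokes its Corollary~\ref{cor:eX*-preserves-trunc}, whose proof is a stalks argument (any point of the small site is a point of the big site); both routes amount to the same observation that étale-local data over an étale $S$-scheme is already small-étale-local.
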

\begin{proof}
We need to prove that $e_* e^* \wequi \id$. Since stabilization is natural
for functors preserving finite limits, the claim for spectra
immediately follows from the claim for sheaves. If $X \in S_\et$, then $e_* e^*
X \wequi X$, since the étale topology is sub-canonical. Since representable
sheaves generate  $\Shv(S_\et^\hyp)$ under colimits, it suffices to show that
$e_*$ preserves colimits. This is Corollary \ref{cor:eX*-preserves-trunc}.
\end{proof}

Let $1/p \in S$. We construct a map $\sigma: \Gm \to \Sptw[1] := e^*(\Sptw[1])
\in \SH(\Sm_{S,\et}^\hyp)_p^\comp$
as follows. Since $\Gm$ is the cofiber of $i_0: S^0 \to (\A^1 \setminus 0)_+$,
constructing $\sigma$ is the same as constructing a map $\1 \to p^* e^*
\Sptw[1]$, where $p: (\A^1 \setminus 0)_S \to S$ is the canonical map, such that
the composite $\1 \to \Sigma^\infty (\A^1 \setminus 0)_{S+} \to \Sptw[1]$ is
trivial(ized). Since $p^* e^* \wequi e^* p^*$, for this we can use $e^*$ of the map
constructed at the end of Section \ref{sec:proet}.

We denote by $\sigma$ also
its image in $\SH^{S^1}_\et(S)_p^\comp$ and $\SH_\et(S)_p^\comp$. We denote by
$\SH^{S^1}_{\et, \sigma}(S)_p^\comp$ the monoidal localisation of
$\SH^{S^1}_\et(S)_p^\comp$ at the map $\sigma$; in other words this is the localization
at all the maps $\id_{\Sigma^\infty X_+} \wedge \sigma$ for $X \in \Sm_S$.

\begin{corollary} \label{cor:e-ff-sigma-local}
Let $S$ be locally $p$-étale finite, and assume that $1/p \in S$. Then the
canonical functor $\SH(S_\et^\hyp)_p^\comp \to \SH^{S^1}_{\et,\sigma}(S)_p^\comp$ is fully
faithful.
\end{corollary}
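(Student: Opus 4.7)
The plan is to reduce the statement to Corollary \ref{cor:htpy-inv} and Proposition \ref{prop:sigma-locality}. Since $\SH^{S^1}_{\et,\sigma}(S)_p^\comp \hookrightarrow \SH(\Sm_{S,\et}^\hyp)_p^\comp$ is a Bousfield reflection onto the full subcategory of objects that are both $\A^1$-local and $\sigma$-local, and $e^*\colon \SH(S_\et^\hyp)_p^\comp \to \SH(\Sm_{S,\et}^\hyp)_p^\comp$ is already fully faithful by Lemma \ref{lemm:eX-ff}, it suffices to show that $e^*(E)$ is $\A^1$-local and $\sigma$-local for every $E \in \SH(S_\et^\hyp)_p^\comp$.

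The key preliminary is the following natural identification: for every $Y \in \Sm_S$ with structure map $f\colon Y \to S$, the sheaf $e_{Y*} e^*(E) \in \SH(Y_\et^\hyp)$ is canonically equivalent to the étale pullback $f^* E$ (this follows from a Yoneda-type characterization of $f^* E$ by its restrictions to all étale maps into $Y$, combined with the commuting compatibilities between $e^*$, $e_{Y*}$ and $e_{U*}$ for $U \to Y$ étale). Since $f$ is smooth, $f^*$ admits a left adjoint $f_\#$ and hence preserves limits and $p$-completeness, placing $f^* E$ in $\SH(Y_\et^\hyp)_p^\comp$. Moreover, pulling back an étale cover of $S$ by étale finite schemes yields an étale cover of $Y$ (and of $\A^1_Y$ and of $(\A^1 \setminus 0)_Y$) by schemes of uniformly bounded étale cohomological dimension, so the hypotheses of Corollary \ref{cor:htpy-inv} and Proposition \ref{prop:sigma-locality} are met. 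Under this identification, $\A^1$-locality of $e^*(E)$ at $Y$ is exactly the statement $f^* E \wequi q_* q^* f^* E$ in $\SH(Y_\et^\hyp)_p^\comp$ for $q\colon \A^1_Y \to Y$, which is Corollary \ref{cor:htpy-inv}.

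For $\sigma$-locality, we need $\Sigma^\infty Y_+ \wedge \sigma$ to be an equivalence after $\Map(-, e^*E)$ for every $Y \in \Sm_S$. Using the cofiber sequence $\Sigma^\infty Y_+ \to \Sigma^\infty (Y \times (\A^1 \setminus 0))_+ \to \Sigma^\infty Y_+ \wedge \Gm$ and the identification of the previous paragraph applied to $(\A^1 \setminus 0)_Y$, this reduces to the assertion that $\sigma_{f^*E}\colon \imap(\Sptw[1], f^*E) \to (f^*E)^\Gm$ is an equivalence in $\SH(Y_\et^\hyp)_p^\comp$, which is Proposition \ref{prop:sigma-locality}. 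The only real subtlety, and the main obstacle, is matching the map $\sigma$ in $\SH(\Sm_{S,\et}^\hyp)_p^\comp$ constructed just before this corollary to the map $\sigma_{f^*E}$ appearing in Proposition \ref{prop:sigma-locality}: this is a naturality/base change statement which follows from the stability of the pro-étale construction of $\sigma$ (Proposition \ref{prop:sigma}) under arbitrary base change, combined with the compatibility $e_{Y*} e^*(E) \wequi f^* E$ specialized to $Y = (\A^1 \setminus 0)_S$ and its pullbacks.
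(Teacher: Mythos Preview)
Your proposal is correct and follows essentially the same route as the paper: reduce via Lemma~\ref{lemm:eX-ff} to showing that $e^*(E)$ is $\A^1$- and $\sigma$-local, then identify the relevant mapping spectra with those appearing in Corollary~\ref{cor:htpy-inv} and Proposition~\ref{prop:sigma-locality}. The paper phrases the reduction as ``by the projection formula we may replace $S$ by $X$,'' whereas you make the underlying identification $e_{Y*}e^*(E)\wequi f^*E$ explicit and check the finiteness hypotheses on $Y$, $\A^1_Y$, $(\A^1\setminus 0)_Y$; but this is the same argument in slightly different packaging.
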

\begin{proof}
By Lemma \ref{lemm:eX-ff}, it suffices to prove that if $E \in
\SH(S_\et^\hyp)_p^\comp$, then $e^*(E) \in \SH(\Sm_{S,\et}^\hyp)_p^\comp$ is
$\A^1$-local and $\sigma$-local. In other words for $X
\in \Sm_S$, the maps $e^*(E)(X) \to e^*(E)(\A^1_X)$ and $e^*(E)(X_+ \wedge \Sptw[1]) \to
e^*(E)(X_+ \wedge \Gm)$ are equivalences. By the projection formula (and since
$\Sptw[1]$ is stable under base change), we may replace $S$ by $X$ and so
assume that $S=X$. Note that $e^*(E)(\A^1) \wequi \map(\1, q^* e^* E) \wequi
\map(\1, q^* E) \wequi \map(\1, q_*q^* E)$, where $q: \A^1_S \to S$ is the projection.
Similarly $e^*(E)(\Gm) \wequi \map(\1, E^\Gm)$ and $e^*(E)(\Sptw[1]) \wequi \map(\Sptw[1],
e^* E) \wequi \map(\1, \imap(\Sptw[1], E))$.
Thus this is the content of Corollary \ref{cor:htpy-inv} and Proposition
\ref{prop:sigma-locality}.
\end{proof}

Let $k$ be a field, say of finite étale cohomological dimension.
Recall the \emph{étale motive} functor $M: \SH_\et(k)_p^\comp \to \DM_\et(k,
\Z/p)$: e.g. in \cite{bachmann-hurewicz} the functor $\SH(k) \to \DM(k) \to
\DM(k, \Z/p)$ is defined; upon localization and $p$-completion this induces
$\SH_\et(k)_p^\comp \to \DM_\et(k, \Z/p)_p^\comp \wequi \DM_\et(k, \Z/p)$ since
$\DM_\et(k, \Z/p)$ is already $p$-complete (all objects being $p$-torsion).

\begin{lemma} \label{lemm:sigma-realization}
The map $M(\sigma): \1(1)[1] \to M\Sptw[1] \in \DM_\et(k, \Z/p)$ is an equivalence.
\end{lemma}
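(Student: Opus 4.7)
The plan is to identify $M(\sigma)$ as a nonzero endomorphism of the $\otimes$-invertible object $\1(1)[1]$ and conclude.

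First I would identify $M(\Sptw) \wequi \1(1) \in \DM_\et(k, \Z/p)$: this follows from Lemma \ref{lemm:plausibility} together with the fact that $M$ is symmetric monoidal and the étale Eilenberg--MacLane spectrum $H\Z/p$ is carried to the unit of $\DM_\et(k, \Z/p)$ (since both represent étale cohomology with $\Z/p$ coefficients), so $M(\Sptw) \wequi M(\Sptw \wedge H\Z/p) \wequi M(\mu_p) \wequi \1(1)$. Combined with $M(\Gm) \wequi \1(1)[1]$, this makes $M(\sigma)$ an endomorphism of $\1(1)[1]$. Since $\1(1)[1]$ is $\otimes$-invertible,
\[ [\1(1)[1], \1(1)[1]]_{\DM_\et(k,\Z/p)} \wequi [\1, \1] = \H^0_\et(k, \Z/p) = \Z/p, \]
a field, so it suffices to show $M(\sigma) \ne 0$.

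Next I would pass to an adjoint to express $M(\sigma)$ as a Kummer class. Recall $\sigma$ is the reduced (with respect to $i_1^*$) version of the adjoint map $\tilde\sigma: \Sigma^\infty(\A^1\setminus 0)_+ \to \Sptw[1]$ with $i_1^* \tilde\sigma = 0$; under the canonical splitting $\Sigma^\infty(\A^1 \setminus 0)_+ \wequi \1 \oplus \Sigma^\infty \Gm$ this reads $\tilde\sigma = (0, \sigma)$. Applying $M$ gives
\[ M(\tilde\sigma) \in \Hom_{\DM_\et(k,\Z/p)}(\1 \oplus \1(1)[1], \1(1)[1]) \wequi \H^1_\et(k, \mu_p) \oplus \Z/p, \]
which matches the Kummer-theoretic decomposition $\H^1_\et(\A^1\setminus 0, \mu_p) \wequi k[t, t^{-1}]^\times/p \wequi k^\times/p \oplus \Z/p\cdot[t]$ (using that $\text{Pic}(\A^1\setminus 0) = 0$), with the first summand extracted by $i_1^*$.

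Finally, the computation at the end of the proof of Proposition \ref{prop:sigma-locality} identifies $M(\tilde\sigma)$ with the Kummer class of the $\mu_p$-torsor $C_1 = Spec(k[t,t^{-1}, u]/(u^p - t)) \to \A^1\setminus 0$, namely $[t]$, which corresponds to $(0, 1) \in k^\times/p \oplus \Z/p\cdot[t]$. Hence $M(\sigma) = 1 \ne 0 \in \Z/p$, so $M(\sigma)$ is an equivalence. The only substantive input beyond formal manipulations is the Kummer identification already performed in the proof of Proposition \ref{prop:sigma-locality}; the rest is bookkeeping of canonical decompositions.
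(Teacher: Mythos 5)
Your overall strategy is the paper's: identify $M\Sptw[1]$ with $\1(1)[1]$, reduce to showing that the resulting endomorphism of the invertible object is non-zero in $[\1,\1]_{\DM_\et(k,\Z/p)} = \Z/p$, and identify the relevant class with the Kummer class of the torsor $C_1$ by the same tracing already carried out in the proof of Proposition \ref{prop:sigma-locality}. Your Kummer bookkeeping ($\H^1_\et(\A^1\setminus 0,\mu_p)\wequi k^\times/p\oplus\Z/p\cdot[t]$, with the first summand extracted by $i_1^*$, so that the class of $C_1$ is the generator of the reduced summand) just makes explicit what the paper summarizes as ``this torsor is essentially $C_1$, so in particular non-trivial''.

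The one step that does not work as you justify it is the identification $M(\Sptw)\wequi\1(1)$. It is not true that $M$ carries $H\Z/p$ to the unit of $\DM_\et(k,\Z/p)$: $M$ is (the étale, $p$-complete localization of) the free-module functor over the motivic Eilenberg--MacLane spectrum, so $M(H\Z/p)$ has underlying object of the form $H\Z/p\wedge H\Z/p$, whose homotopy is a mod-$p$ dual Steenrod algebra rather than $\Z/p$ concentrated in degree $0$. For the same reason the links $M(\Sptw)\wequi M(\Sptw\wedge H\Z/p)$ and $M(\mu_p)\wequi\1(1)$ in your chain are false as written (applying $M$ to the sheaf $\mu_p$ again produces a free module, not $\1(1)$). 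What is actually needed --- and what the paper invokes --- is the rigidity equivalence $\DM_\et(k,\Z/p)\wequi D(k_\et,\Z/p)$ of Cisinski--Déglise \cite[Theorem 4.5.2]{cisinski2013etale}, under which the composite of $M$ with the realization $e$ is computed by smashing with the sheaf-level $H\Z/p$ inside $\SH(k_\et^\hyp)$; then Lemma \ref{lemm:plausibility} gives $M\Sptw\wequi\mu_p$, and $\1(1)\wequi\mu_p$ \cite[Proposition 3.2.2]{cisinski2013etale} finishes the identification. This is clearly the intent behind your parenthetical ``both represent étale cohomology with $\Z/p$ coefficients'', but the monoidality argument you give in its place is incorrect; with that step repaired, the rest of your proof coincides with the paper's.
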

\begin{proof}
Lemma \ref{lemm:plausibility} together with \cite[Theorem
4.5.2]{cisinski2013etale} implies that $M\Sptw[1] \wequi \mu_p[1]$, and hence
$M(\sigma)$ classifies a $\mu_p$-torsor on $\A^1 \setminus 0$ (which is trivial
over $1$). As in the proof of Proposition \ref{prop:sigma-locality},
tracing through the definitions, we find that this torsor
is essentially $C_1$, so in particular non-trivial. Since $\1(1) \wequi \mu_p
\in \DM_\et(k, \Z/p)$
\cite[Proposition 3.2.2]{cisinski2013etale}, we find that $M(\sigma)$ is an
equivalence (since it is non-zero).
\end{proof}

\begin{lemma} \label{lemm:tau}
Let $k$ be a separably closed field, $1/p \in k$. There is a map $\tau: \1[1]
\to \Gm \in \SH^{S^1}_\et(k)_p^\comp$ such that $M(\tau): \1[1] \to M(\Gm) \wequi \1[1] \in
\DM_\et(k, \Z/p)$ is an equivalence.
\end{lemma}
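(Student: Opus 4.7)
The plan is to construct $\tau$ as the limit of a compatible system of maps $\tau_n : \1[1] \to \Gm/p^n$, and then verify that $M(\tau)$ is an equivalence. Since $\Gm$ is $p$-complete in $\SH^{S^1}_\et(k)_p^\comp$, we have $\Gm \wequi \lim_n \Gm/p^n$, so such a coherent system will uniquely determine $\tau$ in the limit.

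To construct each $\tau_n$, the central step is to establish a Kummer-type identification $\Gm/p^n \wequi H\mu_{p^n}[1]$ in $\SH^{S^1}_\et(k)_p^\comp$, where $H$ denotes the Eilenberg--MacLane functor. The short exact sequence $0 \to \mu_{p^n} \to \Gm \xrightarrow{(-)^{p^n}} \Gm \to 0$ of étale sheaves of abelian groups directly yields the analogous identification $H\Gm/p^n \wequi H\mu_{p^n}[1]$ for Eilenberg--MacLane spectra. To upgrade this to an identification involving $\Sigma^\infty \Gm$ itself, one must compare the stable categorical multiplication by $p^n$ on $\Sigma^\infty \Gm$ with $\Sigma^\infty$ applied to the geometric $p^n$-th power map on $\Gm$; these agree because $\Gm$ is a grouplike $E_\infty$-object, and the Hurewicz map $\Sigma^\infty \Gm \to H\Gm$ intertwines the two operations in a manner compatible with $\wedge \1/p^n$. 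This is the main technical obstacle.

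Over the separably closed base $k$ the étale sheaves $\mu_{p^n}$ are constant, isomorphic to $\underline{\Z/p^n}$, so the identity element $1 \in \Z/p^n$ defines a canonical generator of $[\1, H\mu_{p^n}]_{\SH^{S^1}_\et(k)_p^\comp} \wequi \H^0_\et(k, \mu_{p^n}) = \Z/p^n$, compatible in $n$ under the reduction maps $\Z/p^{n+1} \twoheadrightarrow \Z/p^n$. Shifting by one and composing with the Kummer identification produces the maps $\tau_n : \1[1] \to H\mu_{p^n}[1] \wequi \Gm/p^n$, and we set $\tau := \lim_n \tau_n$.

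Finally, to verify that $M(\tau)$ is an equivalence: by Lemma \ref{lemm:sigma-realization} we have $M(\Gm) \wequi \1(1)[1] \wequi \mu_p[1]$ in $\DM_\et(k, \Z/p)$, and for separably closed $k$ we further have $\mu_p \wequi \Z/p \wequi \1$. By construction $M(\tau)$ corresponds to the generator of $\mu_p(k) = [\1, \mu_p]_{\DM_\et(k, \Z/p)} = \Z/p$, which is a nonzero element in the one-dimensional $\Z/p$-vector space $[\1[1], \1(1)[1]]_{\DM_\et(k, \Z/p)}$ and therefore an equivalence. The hard part of the argument is entirely in the second paragraph: making the Kummer identification $\Gm/p^n \wequi H\mu_{p^n}[1]$ rigorous at the level of $\Sigma^\infty \Gm$, which amounts to reconciling the geometric power map on $\Gm$ with stable multiplication in $\SH^{S^1}_\et(k)_p^\comp$.
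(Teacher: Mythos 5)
Your construction hinges on the claimed identification $\Gm/p^n \wequi H\mu_{p^n}[1]$ in $\SH^{S^1}_\et(k)_p^\comp$, and this is where the argument breaks: that equivalence is false at the level of suspension spectra. The Kummer sequence identifies $H\Gm/p^n \wequi H\mu_{p^n}[1]$ for Eilenberg--MacLane objects, i.e.\ after applying $H$ (equivalently, after smashing with $H\Z$, which is why the analogous statement does hold in $\DM_\et(k,\Z/p)$), but the Hurewicz map $\Sigma^\infty \Gm \to H\Gm$ is very far from an equivalence, even étale-locally and $p$-completed. Indeed, by the main theorem of the paper, over a separably closed field $\Gm/p^n$ becomes the shifted mod-$p^n$ Moore spectrum $\1/p^n[1]$ in $\SH_\et(k)_p^\comp \wequi \SH_p^\comp$, whose higher stable homotopy groups (stable stems mod $p^n$) distinguish it from $H\Z/p^n[1]$; so no amount of reconciling the geometric $p^n$-th power map with stable multiplication by $p^n$ (which is itself a real subtlety: stably the power map is $p^n_\epsilon$, and equals $p^n$ only because $\langle -1\rangle = 1$ over a separably closed field) can produce the asserted equivalence. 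Consequently your $\tau_n$ is only a map $\1[1] \to H\mu_{p^n}[1]$; to get a map into $\Gm/p^n$ you would have to lift it along the Hurewicz map, which is precisely the obstruction problem the proposal does not address.

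The paper resolves exactly this lifting problem by working in $\SH^{S^1}(k)$ \emph{without} étale localization and using Morel's computations: $[\1,\Gm] = K^{MW}_1(k)$ and $[\1[-1],\Gm]=0$. Since $-1$ is a square in $k$, the class $[\zeta]$ of a primitive $p$-th root of unity satisfies $p[\zeta]=[\zeta^p]=0$, which produces $\tau_1 \in [\1[1],\Gm/p]$; the obstructions to extending to a compatible tower $\tau_n \in [\1[1],\Gm/p^n]$ lie in $[\1,\Gm/p]$, which vanishes because $K^{MW}_1(k)$ is $p$-divisible ($k^\times$ being $p$-divisible), and the Milnor sequence then yields $\tau$ mapping to the tower. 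Your final step (identifying $M(\tau)$ with a generator of $[\1[1],\mu_p[1]] \cong \Z/p$, hence an equivalence) matches the paper's in spirit, but it only makes sense once a genuine lift $\tau_n: \1[1] \to \Gm/p^n$ has been produced, which your argument does not supply.
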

\begin{proof}
We use the following facts, which hold for any object $E$ in a presentable
stable $\infty$-category\todo{reference?}\footnote{Points (1) and (3) follow
from the same statement in $\SH$, using that $E/p^n = \1/p^n \wedge E$. The
distinguished triangle in (3) can be constructed using the octahedral axiom. To
see that $E \to E_p^\comp$ is a $p$-equivalence, note that $E_p^\comp \wequi
\imap(\1/p^\infty[-1], E)$ where $\1/p^\infty = \colim_n \1/p^n$. It is thus enough
to show that $\1/p^\infty/p \wequi \1/p$, which again follows from the same
statement in $\SH$. For a treatment in the case of $\SH$, see e.g.
\cite[Section 2]{bousfield1979localization}.}
\begin{enumerate}
\item There is an inverse system $E \to \dots \to E/p^3 \xrightarrow{r_2} E/p^2
  \xrightarrow{r_1} E/p$.
\item The induced map $E \to \lim_n E/p^n =: E_p^\comp$ is a $p$-equivalence.
\item For each $n$ there are distinguished triangles \[ E/p \to E/p^{n+1}
  \xrightarrow{r_n} E/p^n. \]
\end{enumerate}
The Milnor sequence  \cite[Proposition VI.2.15]{goerss2009simplicial}
then shows that there is for any other object $F$ a surjection $[F, E_p^\comp]
\to \lim_n [F, E/p^n]$.

We apply this to the category $\SH^{S^1}(k)$. Note that there is no étale
localization! Then we know that $[\1, \Gm]_{\SH^{S^1}(k)} = K_1^{MW}(k)$ and
$[\1[-1], \Gm]_{\SH^{S^1}(k)} = 0$ \cite[Corollary 6.43]{A1-alg-top}. Choose a
primitive $p$-th root of unity $\zeta \in k^\times$. Since $-1$ is a square in
$k$, we have $\lra{-1} = 1 \in GW(k)$ and hence $p[\zeta] = p_\epsilon [\zeta] =
[\zeta^p] = [1] = 0 \in K_1^{MW}(k)$ \cite[Lemma 3.14]{A1-alg-top}. Consequently
$[\zeta] \in ker([\1, \Gm] \xrightarrow{p} [\1, \Gm])$. Let $\tau_1 \in [\1[1],
\Gm/p]$ lift $[\zeta]$. We shall construct a sequence of elements $\{\tau_n \in
[\1[1], \Gm/p^n]\}_n$ with $r_{n}(\tau_{n+1}) = \tau_n$. This implies that the
$\{\tau_n\}_n$ define an element of $\lim_n [\1[1], \Gm/p^n]$ and hence lift to
an element $\tau \in [\1[1], \Gm_p^\comp]$. We construct the $\{\tau_n\}_n$
inductively. Suppose $\tau_n$ is constructed. Using (3), $\tau_{n+1}$ exists if
and only if the image of $\tau_n$ under the boundary map $[\1[1], \Gm/p^n] \to
[\1[1], \Gm/p[1]]$ is zero. It is thus enough to show that $[\1, \Gm/p] = 0$.
Since $[\1[-1], \Gm] = 0$ we have an exact sequence $[\1, \Gm] \xrightarrow{p}
[\1, \Gm] \to [\1, \Gm/p] \to 0$; it is hence enough to show that $K_1^{MW}(k)
\xrightarrow{p} K_1^{MW}(k)$ is surjective. Let $a_1, \dots, a_m \in k^\times$; then
$K_1^{MW}(k)$ is generated by elements of the form $\eta^{m-1}[a_1] \dots [a_m]$
\cite[Lemma 3.4]{A1-alg-top}.
As observed above,
$p[a_1] = [a_1^p]$; it is thus enough to show that $k^\times \xrightarrow{p}
k^\times$ is surjective. Since $k$ is separably closed of characteristic $\ne p$,
this is clear.

We define the map $\tau$ required for this lemma to be the étale locacalization
of the map $\tau$ we constructed. It remains to show that $M(\tau)$ is an
equivalence. Let $M': \SH^{S^1}(k) \to \DM(k, \Z/p)$ denote the ``associated
Nisnevich motive'' functor. Then $M'(\tau): \1[1] \to \1(1)[1] \in \DM(k, \Z/p)$
corresponds to a map $\tau': \1[1] \to \Gm \wedge H\Z/p \in \SH(k)$, where $H\Z/p$
denotes the motivic Eilenberg-MacLane spectrum. By construction, $\tau'$ is
homotopic to the composite $\1[1] \xrightarrow{\tau_1} \Gm/p \to \Gm/p \wedge H\Z \wequi \Gm
\wedge H\Z/p$. It follows that $\tau'$ corresponds precisely to
the map $\beta: \1 \to \1(1) \in \DM(k, \Z/p)$ as constructed for example in
\cite[bottom of p. 202]{hasemeyer2005motives}, which is well-known to induce an
equivalence after étale localization.
\end{proof}

\begin{theorem}
Let $S$ be a scheme, $1/p \in S$. Then $\sigma: \Gm \to \Sptw[1] \in
\SH_\et(S)_p^\comp$ is an equivalence.
\end{theorem}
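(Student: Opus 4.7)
Since $\sigma$ is constructed on any scheme $S$ with $1/p \in S$ by pullback from $\mathrm{Spec}(\Z[1/p])$, and this pullback preserves equivalences, it suffices to prove the theorem for $S = \mathrm{Spec}(\Z[1/p])$. This base is locally étale finite by Example \ref{ex:etale-finite} and of Krull dimension one, so Corollary \ref{cor:conservativity} reduces the problem to checking that the pullback of $\sigma$ to every geometric point of $\mathrm{Spec}(\Z[1/p])$ is an equivalence. Since $\sigma$ is stable under base change, we may therefore assume $S = \mathrm{Spec}(k)$ with $k$ a separably closed field of characteristic different from $p$.

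Over such a $k$, all $p^n$-th roots of unity are present, so Theorem \ref{thm:Sptw-small}(3) gives $\Sptw \wequi \1$ in $\SH(k_\et^\hyp)_p^\comp$. Combining Lemma \ref{lemm:eX-ff} with Corollary \ref{cor:e-ff-sigma-local}, the functor $\SH(k_\et^\hyp)_p^\comp \hookrightarrow \SH^{S^1}_{\et,\sigma}(k)_p^\comp$ is fully faithful. Since $\Shv(k_\et^\hyp)$ is the $\infty$-topos of spaces for $k$ separably closed, the left-hand side is $\SH_p^\comp$, and hence $[\1,\1]_{\SH^{S^1}_{\et,\sigma}(k)_p^\comp} \wequi \Z_p$.

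Now pick $\tau: \1[1] \to \Gm$ in $\SH^{S^1}_\et(k)_p^\comp$ as in Lemma \ref{lemm:tau} and consider the composite $\sigma\tau: \1[1] \to \Sptw[1]$. In $\SH^{S^1}_{\et,\sigma}(k)_p^\comp$ the identification $\Sptw[1] \wequi \1[1]$ realizes $\sigma\tau$ as an endomorphism of $\1[1]$, that is, as an element of $\Z_p$. Applying the motivic realization $M$ to $\DM_\et(k,\Z/p)$ and using Lemmas \ref{lemm:sigma-realization} and \ref{lemm:tau}, we find that $M(\sigma\tau)$ is an equivalence, so the image of $\sigma\tau$ in $\Z/p$ is a unit and thus $\sigma\tau \in \Z_p^{\times}$. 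In particular $\sigma\tau$ is an equivalence in $\SH^{S^1}_{\et,\sigma}(k)_p^\comp$, and hence also in $\SH_\et(k)_p^\comp$.

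Finally, $\Gm$ and $\Sptw[1]$ are invertible in $\SH_\et(k)_p^\comp$, so by \cite[Lemma 22]{bachmann-hurewicz} the fact that $\sigma\tau$ is an equivalence formally forces $\sigma$ itself to be an equivalence. The main obstacle I anticipate is this last step: one must verify that the Bachmann--Hurewicz criterion genuinely applies in the $p$-complete, $\Gm$-stabilized setting, rather than only in some ambient symmetric monoidal category. The reduction to a separably closed field and the realization argument are otherwise direct applications of the results developed earlier in the paper.
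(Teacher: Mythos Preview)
Your overall strategy matches the paper's closely: reduce to $S=\mathrm{Spec}(\Z[1/p])$, then to a separably closed field $k$ via Corollary~\ref{cor:conservativity}, compute that $\sigma\tau$ corresponds to a unit in $\Z_p$ by realizing to $\DM_\et(k,\Z/p)$, and conclude. Two points deserve attention.

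First, a small circularity. You establish that $\sigma\tau$ is an equivalence in $\SH^{S^1}_{\et,\sigma}(k)_p^\comp$ and then write ``hence also in $\SH_\et(k)_p^\comp$''. But there is no functor $\SH^{S^1}_{\et,\sigma}(k)_p^\comp \to \SH_\et(k)_p^\comp$ until you know that $\sigma$ becomes an equivalence after $\Gm$-stabilization, which is precisely the statement being proved. The paper avoids this by working in $\SH^{S^1}_\et(k)_p^\comp$ directly: the proof of Corollary~\ref{cor:e-ff-sigma-local} actually shows that $e^*(E)$ is already $\A^1$-local (not merely $\sigma$-local), so $[\1,\1]_{\SH^{S^1}_\et(k)_p^\comp}\simeq\Z_p$ as well, and one concludes that $\sigma\tau$ is an equivalence \emph{there}. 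Then the honest functor $\SH^{S^1}_\et(k)_p^\comp\to\SH_\et(k)_p^\comp$ carries this over. This is an easy fix, but as written your passage is not justified.

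Second, your flagged worry about the final step is warranted, and the paper resolves it differently from what you propose. You invoke \cite[Lemma~22]{bachmann-hurewicz} together with invertibility of $\Gm$ and $\Sptw[1]$ in $\SH_\et(k)_p^\comp$; but knowing only that $\sigma\tau$ is an equivalence and that source and target of $\sigma$ are invertible does not, on its own, force $\sigma$ to be an equivalence (one would need, e.g., that $\mathrm{End}(\1)$ has no nontrivial idempotents in $\SH_\et(k)_p^\comp$, which is not yet available). The paper instead observes that $\sigma\tau$ being an equivalence in $\SH^{S^1}_\et(k)_p^\comp$ exhibits $\1$ as a summand of $\Gm[-1]$ there, hence also after $\Gm$-stabilization, and then appeals to \cite[Lemma~30]{bachmann-real-etale}, a statement specifically about such splittings under $\otimes$-inversion, to conclude that $\sigma$ (and $\tau$) become equivalences in $\SH_\et(k)_p^\comp$.
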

\begin{proof}
It suffices to treat the case $S = Spec(\Z[1/p])$. By Example
\ref{ex:etale-finite}, $S$ is now locally étale finite.
By Corollary \ref{cor:conservativity} (and stability
of $\sigma$ under base change), we may thus reduce to $S=Spec(k)$, where $k$ is
a separably closed field with $1/p \in k$. Consider the composite $\sigma \tau:
\1[1] \to \Sptw(1)[1] \wequi \1[1] \in \SH^{S^1}_\et(k)_p^\comp$. Then $M(\sigma
\tau)$ is an equivalence, by Lemmas \ref{lemm:sigma-realization} and
\ref{lemm:tau}. Corollary \ref{cor:e-ff-sigma-local} implies that $M: [\1[1],
\1[1]]_{\SH^{S^1}_\et(k)_p^\comp} \to [\1[1], \1[1]]_{\DM(k, \Z/p)}$ is the
reduction map $\Z_p \to \Z/p$. It follows that $\sigma \tau$ is
an equivalence. Hence $\Gm[-1]$ splits off a copy of $\1$ in
$\SH^{S^1}_\et(k)_p^\comp$, and hence also in $\SH_\et(k)_p^\comp$. It now
follows from \cite[Lemma 30]{bachmann-real-etale} that $\sigma$ is a
$\Gm$-stable equivalence (and so is $\tau$). This concludes the proof.
\end{proof}

Consequently for any scheme $S$ the canonical functor
$\SH^{S^1}_\et(S)_p^\comp \to \SH_\et(S)_p^\comp$ factors through
$\SH^{S^1}_{\et,\sigma}(S)_p^\comp$. Since $\Gm$ is invertible in
$\SH^{S^1}_{\et,\sigma}(S)_p^\comp$, we find that the induced functor
$\SH^{S^1}_{\et,\sigma}(S)_p^\comp \to \SH_\et(S)_p^\comp$ is an
equivalence: inverting $\Gm$ commutes with localizations (like $p$-completion),
see e.g. \cite[Lemma 26]{bachmann-real-etale}, and inverting an already
invertible object does not do anything, as is clear from the universal property.

\begin{theorem} \label{thm:main}
Let $S$ be locally $p$-étale finite, and $1/p \in S$. Then the canonical functor
$e: \SH(S_\et^\hyp)_p^\comp \to \SH_\et(S)_p^\comp$ is an equivalence.
\end{theorem}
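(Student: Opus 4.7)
The plan is to prove full faithfulness and essential surjectivity separately.

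Full faithfulness is immediate from what has already been assembled. Corollary \ref{cor:e-ff-sigma-local} shows that $\SH(S_\et^\hyp)_p^\comp \to \SH^{S^1}_{\et,\sigma}(S)_p^\comp$ is fully faithful, and the paragraph just preceding the theorem identifies $\SH^{S^1}_{\et,\sigma}(S)_p^\comp$ with $\SH_\et(S)_p^\comp$; composing gives that $e$ is fully faithful.

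For essential surjectivity I would adapt the strategy in \cite[Proof of Theorem 4.5.2]{cisinski2013etale}. Let $\mathcal{E} \subset \SH_\et(S)_p^\comp$ denote the essential image of $e$. Since $e$ is a symmetric monoidal, fully faithful left adjoint, $\mathcal{E}$ is closed under colimits (hence shifts and cofibers) and under tensoring by its own objects. By Corollary \ref{cor:SH-et-compactly-gen}, a set of compact generators of $\SH_\et(S)_p^\comp$ is provided by $\Sigma^\infty X_+ \wedge \Gmp{n}$ for $X \in \Sm_S$ of finite étale cohomological dimension and $n \in \Z$. Because $e(\Sptw[1]) \wequi \Gm$ with $\Sptw$ invertible (Theorem \ref{thm:Sptw-small}), every $\Gmp{n}$ lies in $\mathcal{E}$, and closure of $\mathcal{E}$ under $\otimes$ reduces us to showing $\Sigma^\infty X_+ \in \mathcal{E}$ for each such $X$.

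To produce $\Sigma^\infty X_+$ inside $\mathcal{E}$, I would first reduce to $S$ qcqs, noetherian and finite-dimensional by invoking continuity (Proposition \ref{prop:continuity}) and conservativity at geometric points (Corollary \ref{cor:conservativity}); then, working locally on $S$, apply Nagata compactification to exhibit $X$ as the complement in a proper $S$-scheme $\bar X$ of a closed subscheme $Z$. The localization piece of Ayoub's six-functor formalism (Theorem \ref{thm:6-functors}), combined with the standard relations between $f_\#$ and $f_*$ for smooth resp.\ proper maps, expresses $\Sigma^\infty X_+$ as an iterated cofiber built from proper pushforwards $f_*\1$ for various proper $f \colon Y \to S$. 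Proper base change---Corollary \ref{cor:proper-base-change} on the left and Ayoub's theorem on the right---then implies that $e$ intertwines these pushforwards after $p$-completion, so each $f_*\1$ lies in $\mathcal{E}$, and closure of $\mathcal{E}$ under colimits and cofibers forces $\Sigma^\infty X_+ \in \mathcal{E}$.

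The main obstacle is the bookkeeping in this last step: Cisinski-Déglise work with compactly supported constructible motives inside a mature six-functor formalism, and one must verify that the analogous open/closed recollement manipulations go through in the hypercomplete, $p$-completed spectral setting. The crucial compatibility to check is that, after $p$-completion, $e$ intertwines $i_*$ for closed immersions (which follows from proper base change since closed immersions are proper) and $j^*$ for open immersions (which is automatic for a pullback), so that the localization triangle attached to the decomposition $\bar X = X \sqcup Z$ is transported faithfully by $e$. Once this has been spelled out, the dévissage reducing $\Sigma^\infty X_+$ to proper pushforwards, and hence the conclusion, should follow formally.
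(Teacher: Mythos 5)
Your proposal is correct and takes essentially the same route as the paper: full faithfulness exactly as you describe, and essential surjectivity by reducing the generators to proper (projective) pushforwards of twists of the unit and then using proper base change on both sides (Corollary \ref{cor:proper-base-change} on the sheaf side, Theorem \ref{thm:6-functors} on the motivic side) to show that $e$ commutes with these pushforwards, following Cisinski--D\'eglise. The only difference is that the paper cites \cite[Proposition 4.2.13]{triangulated-mixed-motives} for the fact that $\SH_\et(S)$ is generated by objects $q_*(\1)\wedge\Gmp{n}$ with $q$ projective, instead of re-running your Nagata/localization d\'evissage (where, strictly, purity yields $f_!\1$ up to a Thom twist rather than $\Sigma^\infty X_+$ itself --- harmless, since the twists $\Gmp{\pm 1}$ already lie in the essential image).
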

\begin{proof}
We use the argument from
\cite[Theorem 4.5.2]{cisinski2013etale}: the functor $e$ is fully faithful and
preserves colimits, by the above remarks and Corollary
\ref{cor:e-ff-sigma-local}. It hence identifies $\SH(S_\et^\hyp)_p^\comp$ with a
localizing subcategory of $\SH_\et(S)_p^\comp$. We need to show it is
essentially surjective. The category $\SH_\et(S)$ is generated by
objects of the form $q_*(\1 \wedge \Gmp{n}) \wequi q_*(\1) \wedge \Gmp{n}$,
where $q: T \to S$ is projective
\cite[Proposition 4.2.13]{triangulated-mixed-motives}; hence the same holds for
$\SH_\et(S)_p^\comp$. Since $\Gm \wequi
\Sptw[1] \in \SH_\et(X)_p^\comp$, and $\Sptw$ (and all its positive or negative
powers) are in the essential image of $e$, it thus suffices to show
that $e$ commutes with $q_*$. This is proved exactly as in \cite[Proposition
4.4.3]{cisinski2013etale}: it boils down to the fact that both sides satisfy
proper base change (see Corollary
\ref{cor:proper-base-change} and Theorem \ref{thm:6-functors}).
\end{proof}

\section{Applications}
\label{sec:applications}

\begin{theorem}
Let $1/p \in S$ and assume that $S$ is locally étale finite.
There is a symmetric monoidal ``étale realization''
functor $\SH_\et(S) \to \SH(S_\et^\hyp)_p^\comp$.
\end{theorem}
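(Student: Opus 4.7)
The plan is to obtain the realization functor as a two-step composite:
\[ \SH_\et(S) \xrightarrow{L_p} \SH_\et(S)_p^\comp \xrightarrow{(e_p^\comp)^{-1}} \SH(S_\et^\hyp)_p^\comp, \]
where $L_p$ is the $p$-completion functor and the second arrow is the inverse of the equivalence supplied by Theorem \ref{thm:main}. Each arrow will be shown to be symmetric monoidal, so the composite is automatically so.

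First I would check that $L_p \colon \SH_\et(S) \to \SH_\et(S)_p^\comp$ is canonically symmetric monoidal. The point here is that the class of $p$-equivalences is stable under tensoring with arbitrary objects: if $f$ has cofiber $C$ with $C/p \wequi 0$, then for any $Z$ we have $\mathrm{cof}(f \otimes Z) \wequi C \otimes Z$, and $(C \otimes Z)/p \wequi (C/p) \otimes Z \wequi 0$ (using that $\bullet/p$ commutes with $\otimes$ because $\1/p$ is a finite spectrum and tensoring with $Z$ preserves cofiber sequences). This makes the Bousfield localization at $p$-equivalences a symmetric monoidal localization, and hence endows $L_p$ with its canonical symmetric monoidal structure.

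Next I would invoke Theorem \ref{thm:main} together with the fact that $e \colon \SH(S_\et^\hyp) \to \SH_\et(S)$ is symmetric monoidal by construction (it is induced from the geometric morphism $\Shv(S_\et^\hyp) \to \Shv(\Sm_{S,\et}^\hyp)$ sending an étale sheaf to its associated locally constant sheaf, followed by the symmetric monoidal stabilization and localizations defining $\SH_\et$). This symmetric monoidal structure is preserved under $p$-completion, so $e_p^\comp$ is a symmetric monoidal equivalence; its inverse then acquires a canonical symmetric monoidal structure by the usual fact that the inverse of a symmetric monoidal equivalence is symmetric monoidal. Composing with $L_p$ yields the desired functor.

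There is essentially no obstacle: the theorem is a formal consequence of the main theorem together with the standard observation that $p$-completion is a symmetric monoidal localization. The only thing to be slightly careful about is spelling out that $e$ is symmetric monoidal before $p$-completion, which, if not already done in Section \ref{sec:motivic-category}, can be handled by noting that $e^*$ is a left adjoint of a geometric morphism between presentably symmetric monoidal $\infty$-categories preserving the monoidal units and tensor products of representables.
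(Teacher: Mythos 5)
Your proposal is correct and follows exactly the paper's argument: compose the symmetric monoidal $p$-completion localization $\SH_\et(S) \to \SH_\et(S)_p^\comp$ with the symmetric monoidal inverse of the equivalence from Theorem \ref{thm:main}. The extra justifications you supply (stability of $p$-equivalences under tensoring, monoidality of $e$ and of the inverse of a monoidal equivalence) are just the routine details the paper leaves implicit.
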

\begin{proof}
Take the composite of the symmetric monoidal localization $\SH_\et(S) \to
\SH_\et(S)_p^\comp$ with the symmetric monoidal inverse of the symmetric
monoidal equivalence $\SH(S_\et^\hyp)_p^\comp \to \SH_\et(S)_p^\comp$ from
Theorem \ref{thm:main}.
\end{proof}

We can also determine the endomorphisms of the unit in $\SH_\et(S)$ without
$p$-completion, at least away from primes non-invertible in $S$, and under some
more restrictive hypotheses on $S$.

\begin{theorem} \label{thm:app2}
Let $X$ be locally étale finite. Let $S$ be the set of primes not invertible on $X$.
Consider the functor $e: \SH(X_\et^\hyp)[1/S] \to \SH_\et(X)[1/S]$.
\begin{enumerate}
\item If $X$ is regular, noetherian and finite dimensional, then $e$ is fully
  faithful when restricted to the localizing subcategory generated by $\1$.
\item If $X$ is the spectrum of a field, then $e$ is fully faithful.
\end{enumerate}
\end{theorem}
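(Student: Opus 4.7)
The plan is to combine Theorem~\ref{thm:main} with the arithmetic fracture square for spectra: a map in $\SH[1/S]$ is an equivalence if and only if it becomes one after rationalization and after $p$-completion for each $p \notin S$. After inverting $S$, the primes $p \notin S$ are precisely those with $1/p \in X$, and for every such $p$ Theorem~\ref{thm:main} provides an equivalence $\SH(X_\et^\hyp)_p^\comp \wequi \SH_\et(X)_p^\comp$ compatible with $e$, so the $p$-complete piece of each fracture is automatically an equivalence. The remaining task is the rational statement, which will be handled via classical rigidity: under the regularity hypothesis one has $\SH_\et(X)_\Q \wequi \DM_\et(X, \Q) \wequi \DM(X, \Q) \wequi D_\et(X, \Q)$ (Morel; Cisinski--D\'eglise combined with the Beilinson--Deligne description of rational motives over a regular base), and a parallel identification $\SH(X_\et^\hyp)_\Q \wequi D_\et(X, \Q)$ holds using local \'etale finiteness. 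Under both, $e_\Q$ is essentially the identity.

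For part (1), let $\scr C \subset \SH(X_\et^\hyp)[1/S]$ denote the localizing subcategory generated by $\1$. Since $\1$ is compact in $\scr C$, we have $\scr C \wequi R\Mod$ where $R := \mathrm{End}_{\SH(X_\et^\hyp)[1/S]}(\1)$; similarly the analogous subcategory $\scr C' \subset \SH_\et(X)[1/S]$ is $R'\Mod$ for $R' := \mathrm{End}_{\SH_\et(X)[1/S]}(\1)$. Because $e(\1) \wequi \1$ and $e$ is cocontinuous, $e|_{\scr C}$ lands in $\scr C'$ and corresponds to the canonical ring map $R \to R'$. Full faithfulness therefore reduces to showing this map is an equivalence of $\bb E_\infty$-ring spectra, which the fracture argument delivers.

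For part (2), with $X = Spec(k)$, I will check full faithfulness on a generating set. Local \'etale finiteness together with Lemma~\ref{lemm:uniformly-bounded-compact-generation} ensures that $\SH(k_\et^\hyp)[1/S]$ is compactly generated by the objects $\Sigma^\infty_+ Y$ for $Y \in k_\et$ qcqs; by cocontinuity of $e$, the claim reduces to showing that the map $\map(\Sigma^\infty_+ Y, F) \to \map(e\Sigma^\infty_+ Y, eF)$ is an equivalence for all such $Y$ and all $F$. Using the adjunction $f_\# \dashv f^*$ for the \'etale morphism $f \colon Y \to Spec(k)$ and the compatibility $e \circ f^* \wequi f^* \circ e$, this reduces to showing that $\map(\1, G) \to \map(\1, eG)$ is an equivalence in $\SH(Y_\et^\hyp)[1/S]$ for every $G$. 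Since each connected component of $Y$ is the spectrum of a finite separable extension of $k$ (hence regular, noetherian, and zero-dimensional), and $\1$ is compact so $p$-completion and rationalization commute with $\map(\1, -)$, applying the fracture argument termwise closes the proof.

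The hard part will be verifying the rational identification $\SH(X_\et^\hyp)_\Q \wequi D_\et(X, \Q) \wequi \SH_\et(X)_\Q$ with $e_\Q$ compatible: the $p$-complete ingredient is a direct appeal to Theorem~\ref{thm:main} and the fracture itself is routine, but the rational comparison combines rigidity for rational \'etale motives with a direct description of rational \'etale hypersheaves of spectra on a regular base, and checking that $e$ intertwines both presentations is where the main subtlety lies.
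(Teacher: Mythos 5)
Your reduction scheme (handle each prime $p \notin S$ via Theorem \ref{thm:main}, then reduce to a rational statement by a fracture/unique-divisibility argument) is exactly the first half of the paper's proof, and that part is fine modulo the routine point, which the paper handles via preservation of compact generators, that $e_*$ commutes with the relevant localizations. The genuine gap is the rational step, which you dismiss as ``classical rigidity'': there is no rational rigidity. The chain $\SH_\et(X)_\Q \wequi \DM_\et(X,\Q) \wequi \DM(X,\Q)$ is correct, but the further identification $\DM(X,\Q) \wequi D_\et(X,\Q)$ is false, and likewise $e_\Q$ is very far from ``essentially the identity''. Rationally, $\SH_\et(X)_\Q$ is the category of Beilinson motives $\DM_{\mathcyr{B}}(X)$, whose mapping spectra out of the unit compute rational motivic cohomology (graded pieces of rational $K$-theory); for instance $[\1,\1(1)[1]]_{\DM(X,\Q)} \cong \scr O^\times(X)\otimes\Q$, which has no counterpart in $D(X_\et,\Q)$, where $\H^i_\et(X,\Q)$ vanishes for $i>0$. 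So $e_\Q$ is not an equivalence, and if your claimed identification held, the theorem would be true without restricting to the localizing subcategory generated by $\1$ in case (1) — the restriction is there precisely because rational rigidity fails.

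What is actually needed, and what the paper does, is a direct computation showing $e_*(\1)\wequi \1$ rationally: one compares $[\1,\1[i]]_{D(X_\et^\hyp,\Q)} \cong \H^i_\et(X,\Q)$ (zero for $i\ne 0$, equal to $\H^0_\Zar(X,\Q)$ for $i=0$) with $[\1,\1[i]]_{\DM_{\mathcyr{B}}(X)} \cong Gr^0_\gamma K_{-i}(X)_\Q$, which vanishes for $i>0$ because $X$ is regular and for $i<0$ by vanishing of negative $K$-theory of regular noetherian schemes, and gives $\H^0(X,\Q)$ for $i=0$. This is where the regularity hypothesis genuinely enters, and it is absent from your argument. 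For case (2) one must still upgrade the unit computation to full faithfulness of $e_\Q$ on the whole category; the paper does this using that $D(k_\et^\hyp,\Q)$ is compactly generated by dualizable objects (finite separable extensions), invoking \cite[Lemma 22]{bachmann-hurewicz} — your termwise fracture over each $Y \in k_\et$ again silently relies on the false rational equivalence. (A smaller technical point: your Schwede--Shipley step in (1) assumes $\1$ is compact in $\SH(X_\et^\hyp)[1/S]$, which local étale finiteness alone does not guarantee, e.g.\ for $X = Spec(\Q)$; the paper works with the unit map $\id \Rightarrow e_*e$ and compact generators $\Sigma^\infty_+ Y$ with $\cd(Y)<\infty$ instead.)
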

\begin{proof}
Write $e: \SH(X_\et^\hyp) \adj \SH_\et(X): e_*$ for the adjunction. Under our
assumptions, $e$ preserves compact generators by Corollary
\ref{cor:SH-et-compactly-gen} and Lemma \ref{lemm:SH-compact-objects}. In
particular $e_*$ preserves $S$-localizations, rationalizations and so on.
We wish to prove that $\eta: \id \Rightarrow e_*e$ is an isomorphism on
$\SH(X_\et^\hyp)[1/S]$ (in case (2)) or the
localizing subcategory thereof generated by the unit (in case (1)).
Theorem \ref{thm:main} implies that $\eta/p$ is an
equivalence for $p$ invertible on $X$. It follows that the cofiber of $\eta$ is
uniquely $p$-divisible for all $p$ invertible on $X$, and also for all the other
$p$ by construction (i.e. since we are working in $\SH(X_\et^\hyp)[1/S]$). Thus
in order to show that $\eta$ is an equivalence it suffices to show that it is a
rational equivalence. We thus reduce to showing that $e_\Q: \SH(X_\et^\hyp)_\Q
\to \SH_\et(X)_\Q$ is fully faithful on an appropriate subcategory.

Recall that $\SH(X)_\Q \wequi D_{\A^1}(X, \Q) \wequi D_{\A^1}(X, \Q)^+
\times D_{\A^1}(X, \Q)^-$, for essentially any $X$
\cite[16.2.1.6]{triangulated-mixed-motives}. For $X$ noetherian,
finite dimensional and geometrically unibranch (e.g. regular \cite[Tag
0BQ3]{stacks-project}),
we have $D_{\A^1}(X, \Q)^+ \wequi \DM(X, \Q) \wequi \DM_\mathcyr{B}(X)$ \cite[Theorems 16.1.4 and
16.2.13]{triangulated-mixed-motives}. Moreover, all objects in $\DM(X, \Q)$
satisfy étale hyperdescent \cite[16.1.3]{triangulated-mixed-motives} (note that
in that reference, ``descent'' means ``hyperdescent'' \cite[Definition
3.2.5]{triangulated-mixed-motives}). Recall from Example
\ref{ex:uniformly-bounded-dim} that étale-locally on any scheme, $-1$ is a sum
of squares. It thus follows from \cite[Corollary
16.2.14]{triangulated-mixed-motives} that the identity endomorphism of the unit
object in $D_{\A^1}(X, \Q)^-$ vanishes étale-locally. This implies (via Lemma
\ref{lemm:easy-conservativity}) that the image of $\1_{D_{\A^1}(X, \Q)^-}$ in
$\SH_\et(X)_\Q$ is zero, and hence the étale hyperlocalization of $D_{\A^1}(X,
\Q)^-$ is zero. All in all we have obtained:
\[ \SH_\et(X)_\Q \wequi \DM(X, \Q) \wequi \DM_\et(X, \Q) \wequi
    \DM_\mathcyr{B}(X). \]

It remains to prove then that $e: D(X_\et^\hyp, \Q) \to \DM(X, \Q)$ is fully
faithful on an appropriate subcategory. In case (2), the left hand side is
compact-rigidly generated (by comparison with Galois cohomology). This implies that $e$
is fully faithful as soon as $e_* e(\1) \wequi \1$ \cite[Lemma
22]{bachmann-hurewicz}. Hence in either case we are reduced to proving
that $e_*(\1) \wequi \1$. The problem is local on $X$, so we may assume that $X$
is affine. We have $[\1, \1[i]]_{D(X_\et^\hyp, \Q)} \wequi
\H^i_\et(X, \Q)$, which $=0$ for $i \ne 0$ (see e.g. \cite[2.1]{DENINGER1988231}),
while $\H^0_\et(X, \Q) \wequi \H^0_\Zar(X, \Q)$ (e.g. by Lemma
\ref{lemm:constant-sheaves}). We need to show that $[\1,
\1[i]]_{\DM_\mathcyr{B}(X)}$ gives the same answer. Now we use that since $X$ is
regular we have $[\1, \1[i]]_{\DM_\mathcyr{B}(X)} = Gr^0_\gamma K_{-i}(X)_\Q$
\cite[Corollary 14.2.14]{triangulated-mixed-motives}. This is $= 0$ if $i > 0$
since $X$ is regular, and $=0$ for $i < 0$ by \cite[sentence before Proposition
IV.5.10]{weibel-k-book}. For $i=0$ we precisely get $\H^0(X, \Q)$ \cite[Theorem
II.4.10(4)]{weibel-k-book}. This concludes the proof.
\end{proof}

\begin{corollary} \label{corr:final}
With the notation and assumptions of Theorem \ref{thm:app2}, we have
\[ [\1[n], \1]_{\SH_\et(X)[1/S]} \wequi \mathbb{H}^{-n}_\et(X, \1[1/S]), \]
where the right hand side denotes étale hypercohomology with coefficients in the
(classical) sphere spectrum.
\end{corollary}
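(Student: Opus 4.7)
The plan is to deduce the corollary from Theorem \ref{thm:app2}(1) by transporting the computation of endomorphisms of the unit from $\SH_\et(X)[1/S]$ to $\SH(X_\et^\hyp)[1/S]$, where the answer is essentially given by definition. More precisely, since $\1 \in \SH(X_\et^\hyp)[1/S]$ of course lies in the localizing subcategory generated by the unit, Theorem \ref{thm:app2}(1) yields that the canonical map
\[ e: [\1, \1]_{\SH(X_\et^\hyp)[1/S]} \xrightarrow{\wequi} [\1, \1]_{\SH_\et(X)[1/S]} \]
is an isomorphism. Thus the corollary reduces to identifying the left-hand side with $\mathbb{H}^0_\et(X, \1[1/S])$.

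For this identification, recall that by our definition of $\SH(X_\et^\hyp)$ as the stabilization of $\Shv(X_\et^\hyp)$, the endomorphisms of the unit object (before $S$-localization) are literally given by
\[ [\1, \1]_{\SH(X_\et^\hyp)} \wequi \pi_0 \Gamma(X_\et^\hyp, \1) = \mathbb{H}^0_\et(X, \1), \]
where $\1$ on the right denotes the étale hypersheafification of the constant presheaf of sphere spectra. It thus suffices to show that $S$-localization commutes with the functor $[\1, -]_{\SH(X_\et^\hyp)}$; equivalently, that $[\1, \1[1/S]]_{\SH(X_\et^\hyp)} \wequi [\1, \1]_{\SH(X_\et^\hyp)}[1/S]$.

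The key point here is compactness: since $X$ is locally étale finite, Lemma \ref{lemm:uniformly-bounded-compact-generation} ensures that $\1 = \Sigma^\infty X_+ \in \SH(X_\et^\hyp)$ is compact, and hence $\map(\1, -)$ preserves filtered colimits. Writing $\1[1/S] = \colim \left( \1 \xrightarrow{n_1} \1 \xrightarrow{n_2} \cdots \right)$ as a sequential colimit over products of primes in $S$, we obtain the desired commutation. The main (minor) obstacle is simply to verify that this sequential colimit model of $\1[1/S]$ inside $\SH(X_\et^\hyp)$ agrees with the hypersheafification of the presheaf of classical $S$-local sphere spectra; but this follows because hypersheafification commutes with filtered colimits (being a left adjoint between presentable $\infty$-categories, combined with the fact that filtered colimits preserve hypercompleteness under our coherence/dimension hypotheses, as encoded in Lemma \ref{lemm:SH-compact-objects}).
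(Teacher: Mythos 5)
Your reduction via Theorem \ref{thm:app2} --- transporting $[\1,\1]$ from $\SH_\et(X)[1/S]$ to $\SH(X_\et^\hyp)[1/S]$ --- is exactly the paper's first (implicit) step. The problem is the second half. You claim that $\1=\Sigma^\infty X_+$ is compact in $\SH(X_\et^\hyp)$ because $X$ is locally étale finite, citing Lemma \ref{lemm:uniformly-bounded-compact-generation}; but that lemma yields compactness of $\Sigma^\infty_+ Y$ only for $Y$ qcqs of \emph{finite} étale cohomological dimension, and local étale finiteness does not bound $\cd(X)$ itself. A relevant example satisfying all hypotheses of Theorem \ref{thm:app2}(1) is $X = Spec(\Z)$ (or any number ring with a real place): there $\cd_2(X)=\infty$ because of the real places, so $\1$ is not compact, while $S$ consists of all primes, so the commutation $[\1,\1[1/S]]_{\SH(X_\et^\hyp)} \wequi [\1,\1]_{\SH(X_\et^\hyp)}[1/S]$ that you base on compactness is a genuinely nontrivial, unproved assertion. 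This is a real gap in the argument as written.

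It is, however, a gap you can simply excise, because the step is not needed: you are implicitly identifying the corollary's right-hand side $\mathbb{H}^0_\et(X,\1[1/S])$ with $\mathbb{H}^0_\et(X,\1)[1/S]$, and only the latter would require commuting $[\1,-]$ past a colimit. By definition $\mathbb{H}^0_\et(X,\1[1/S]) = \pi_0\Gamma\bigl(X, c(\1[1/S])\bigr) = [\1, c(\1[1/S])]_{\SH(X_\et^\hyp)}$, where $c$ denotes étale hypersheafification of the constant presheaf. Since $c$ is a left adjoint, it carries the telescope $\1[1/S] = \colim(\1\to\1\to\cdots)$ to the corresponding telescope in $\SH(X_\et^\hyp)$, and this telescope is the $S$-localization of the unit (it is $S$-local because multiplication by $\ell^2$ annihilates $\1/\ell$, so its mod-$\ell$ reduction has cofinally zero transition maps and vanishes for every $\ell\in S$). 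Hence $[\1,\1]_{\SH(X_\et^\hyp)[1/S]} \wequi [\1, c(\1[1/S])]_{\SH(X_\et^\hyp)} = \mathbb{H}^0_\et(X,\1[1/S])$: maps \emph{into} the localized object need no finiteness hypothesis at all. This is what the paper means by ``essentially by definition''; deleting your compactness paragraph and replacing it with this observation makes your proof agree with the paper's.
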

\begin{proof}
This is just a restatement of the theorem: $\mathbb{H}^{-n}_\et(X, \1[1/S]) \wequi [\1,
\1[n]]_{\SH(X_\et^\hyp)[1/S]}$, essentially by definition.
\end{proof}

\bibliographystyle{alpha}
\bibliography{bibliography}

\end{document}